\newtheorem{theorem}{Theorem}[section]
\newtheorem{proposition}[theorem]{Proposition}%
\newtheorem{corollary}[theorem]{Corollary}
\newtheorem{lemma}[theorem]{Lemma}
\newtheorem{example}[theorem]{Example}%
\newtheorem{remark}[theorem]{Remark}%
\newtheorem{definition}[theorem]{Definition}%
\DeclareMathOperator*{\conv}{conv}
\DeclareMathOperator*{\cl}{cl}
\DeclareMathOperator*{\setint}{int}
\DeclareMathOperator*{\cone}{cone}
\DeclareMathOperator*{\diag}{diag}
\DeclareMathOperator*{\svec}{svec}
\DeclareMathOperator*{\Hom}{Hom}
\DeclareMathOperator*{\rk}{rk}
\DeclareMathOperator*{\smat}{smat}
\DeclareMathOperator{\subjectto}{subject~to}
\newcommand{\SOS}{\mathrm{SOS}}
\newcommand{\MOM}{\mathrm{MOM}}
\newcommand{\minimize}{\mathop{\rm minimize}\limits}
\newcommand{\maximize}{\mathop{\rm maximize}\limits}
\newcommand{\relmiddle}[1]{\mathrel{}\middle#1\mathrel{}}
\def\CP{\mathcal{CP}}
\def\COP{\mathcal{COP}}
\title{Approximation hierarchies for copositive cone over symmetric cone and their comparison}
\let\@fnsymbol\@arabic
\author{
\normalsize
    Mitsuhiro Nishijima\thanks{Department of Industrial Engineering and Economics,
        Tokyo Institute of Technology, 2-12-1 Ookayama, Meguro-ku, 1528552, Tokyo, Japan.
        ({\tt nishijima.m.ae@m.titech.ac.jp, nakata.k.ac@m.titech.ac.jp}).}
\and
\normalsize
        Kazuhide Nakata\footnotemark[1]
        }
\begin{document}
\maketitle

\begin{abstract}\noindent
We first provide an inner-approximation hierarchy described by a sum-of-squares (SOS) constraint for the copositive (COP) cone over a general symmetric cone.
The hierarchy is a generalization of that proposed by Parrilo (2000) for the usual COP cone (over a nonnegative orthant).
We also discuss its dual.
Second, we characterize the COP cone over a symmetric cone using the usual COP cone.
By replacing the usual COP cone appearing in this characterization with the inner- or outer-approximation hierarchy provided by de Klerk and Pasechnik (2002) or Y{\i}ld{\i}r{\i}m (2012), we obtain an inner- or outer-approximation hierarchy described by semidefinite but not by SOS constraints for the COP matrix cone over the direct product of a nonnegative orthant and a second-order cone.
We then compare them with the existing hierarchies provided by Zuluaga et al. (2006) and Lasserre (2014).
Theoretical and numerical examinations imply that we can numerically increase a depth parameter, which determines an approximation accuracy, in the approximation hierarchies derived from de Klerk and Pasechnik (2002) and Y{\i}ld{\i}r{\i}m (2012), particularly when the nonnegative orthant is small.
In such a case, the approximation hierarchy derived from Y{\i}ld{\i}r{\i}m (2012) can yield nearly optimal values numerically.
Combining the proposed approximation hierarchies with existing ones, we can evaluate the optimal value of COP programming problems more accurately and efficiently.
\end{abstract}
\vspace{0.5cm}

\noindent
{\bf Key words. }Approximation hierarchy, Copositive cone, Completely positive cone, Symmetric cone, Copositive programming
% \vspace{0.5cm}
%
% \noindent
% {\bf AMS Classification. }
% 90C20,  	%Quadratic programming
% 90C22,  	%Semidefinite programming
% 90C25, 	%Convex programming
% 90C26.  	%Nonconvex programming, global optimization

\section{Introduction}\label{sec:intro}
In this study, we focus on the copositivity and its dual, the complete positivity of tensors, which include matrices and, more generally, linear transformations, over a symmetric cone.
Typically, the nonnegative orthant, second-order cone, and semidefinite cone and their direct product are symmetric cones.
A symmetric cone plays a significant role in optimization~\cite{Faybusovich1997} and often appears in the modeling of realistic problems~\cite{MT2015,NN2022}.
The copositivity and complete positivity over a nonnegative orthant, i.e., those in the typical sense, are of particular importance.
They have been deeply studied ~\cite{QL2017,SB2021} and exploited in the convex conic reformulation of many NP-hard problems~\cite{BDde2000,Burer2009,PVZ2015,PR2009}.
In addition, the complete positivity over other symmetric cones has been used in the convex conic reformulation of rank-constrained semidefinite programming (SDP)~\cite{BMP2016} and polynomial optimization problems over a symmetric cone~\cite{KKT2020}, whose applications include polynomial SDP~\cite{Kojima2003}, which often appears in system and control theory~\cite{HL2006,HS2004,HS2005}, and polynomial second-order cone programming~\cite{KK2010,LMNe2018}.
Moreover, Gowda~\cite{Gowda2019} discussed (weighted) linear complementarity problems over a symmetric cone, in which the copositivity over a symmetric cone of linear transformations was exploited.
For convenience, the cones of copositive (COP) and completely positive (CP) tensors over a closed cone $\mathbb{K}$ are hereafter called the COP and CP cones (over $\mathbb{K}$), respectively.

As  the copositivity and complete positivity appear in the reformulation of such formidable problems, the COP and CP cones are difficult to handle~\cite{DG2014}.
Thus, to guarantee copositivity or complete positivity, we must consider sufficient and necessary conditions that can be handled efficiently.

To achieve this objective, many types of approximation hierarchies have been proposed~\cite{AM2019,BD2009,dP2002,DP2013,Dong2013,GPS2020,GDFe2014,GL2007,IA2022,Lasserre2014,Parrilo2000,PVZ2007,SBD2012,Yildirim2012,ZF2019,ZVP2006}.
An approximation hierarchy, e.g., $\{\mathcal{K}_r\}_r$, gradually approaches the COP or CP cone from the inside or outside as the depth parameter $r$, which determines the approximation accuracy, increases and, in a sense, agrees with the cone in the limit.
By defining each $\mathcal{K}_r$ as a set represented by nonnegative, second-order cone, or semidefinite constraints, we can tentatively handle the copositivity or complete positivity on a computer using methods such as primal-dual interior point methods~\cite[Chap.~11]{Alizadeh2012}.

Most of the above mentioned works provided approximation hierarchies for the usual COP and CP cones, and few studies~\cite{DP2013,GDFe2014,Lasserre2014,ZVP2006} have considered those for the COP or CP cones over a closed cone $\mathbb{K}$ other than a nonnegative orthant.
Zuluaga et al.~\cite{ZVP2006} provided an inner-approximation hierarchy described by the sum-of-squares (SOS) constraints, which reduce to semidefinite constraints, for the COP cone over a pointed semialgebraic cone.
The term ``semialgebraic'' means that the set is defined by finitely many non-negativity constraints of homogeneous polynomials.
The class of pointed semialgebraic cones includes the nonnegative orthant, second-order cone, and semidefinite cone or their direct product, which are also symmetric cones.
The semidefiniteness of a matrix of size $n$ is characterized by the nonnegativity of all the $2^n - 1$ principal minors.
That is, the semidefinite cone is a pointed semialgebraic cone; however, the semialgebraic representation requires an exponential number of non-negativity constraints in its size.
In such a case, their approximation hierarchy is no longer tractable even if semidefinite constraints can describe it.
Lasserre~\cite{Lasserre2014} provided an outer-approximation hierarchy described by a semidefinite constraint for the COP cone over a general closed convex cone $\mathbb{K}$.
The hierarchy is implementable only for the case in which the moment of a finite Borel measure dependent on $\mathbb{K}$ is obtainable.
The following case of $\mathbb{K}$ being the direct product of a nonnegative orthant and a second-order cone is an example in which the moment can be theoretically obtained.
The study by Dickinson and Povh~\cite{DP2013} is a variant of that of Lasserre~\cite{Lasserre2014}, which considered the special case in which $\mathbb{K}$ is included in a nonnegative orthant to provide a tighter approximation than Lasserre~\cite{Lasserre2014}.

This study aims to provide approximation hierarchies for the COP cone over a symmetric cone and compare them with existing ones.
First, we provide an inner-approximation hierarchy described by an SOS constraint.
It is a generalization of the approximation hierarchy proposed by Parrilo~\cite{Parrilo2000} for the usual COP cone.
We call the proposed approximation hierarchy the NN-type inner-approximation hierarchy.
Moreover, we discuss its dual to provide an outer-approximation hierarchy for the CP cone over a symmetric cone and provide its more explicit expression for the case in which the symmetric cone is a nonnegative orthant.

Second, we characterize the COP cone over a symmetric cone using the usual COP cone.
The basic idea for providing an approximation hierarchy is to replace the usual COP cone appearing in this characterization with its approximation hierarchy.
In general, the induced sequence is defined by the intersection of \textit{infinitely} many sets and is not even guaranteed to converge to the COP cone over a symmetric cone.
However, by exploiting the inner-approximation hierarchy given by de Klerk and Pasechnik~\cite{dP2002} or outer-approximation hierarchy given by Y{\i}ld{\i}r{\i}m~\cite{Yildirim2012}, we obtain an inner- or outer-approximation hierarchy described by \textit{finitely} many semidefinite but not by SOS constraints for the cone of COP matrices (COP matrix cone) over the direct product of a nonnegative orthant and one second-order cone.
Hereafter, we call the proposed inner- and outer-approximation hierarchies the dP- and Y{\i}ld{\i}r{\i}m-type approximation hierarchies, respectively.

As mentioned, Zuluaga et al.'s (ZVP-type) inner-approximation hierarchy~\cite{ZVP2006} and Lasserre's (Lasserre-type) outer-approximation hierarchy~\cite{Lasserre2014} are applicable to the COP matrix cone over the direct product of a nonnegative orthant and second-order cone.
Then, we theoretically and numerically compare the proposed approximation hierarchies with existing ones.
We determined that we can numerically increase a depth parameter in the dP- and Y{\i}ld{\i}r{\i}m-type approximation hierarchies, particularly when the nonnegative orthant is small.
In particular, the Y{\i}ld{\i}r{\i}m-type outer-approximation hierarchy has a higher numerical stability than the Lasserre-type one and can approach nearly optimal values of COP programming (COPP) problems numerically.

The remainder of this paper is organized as follows.
In Sect.~\ref{sec:preliminaries}, we introduce the notation and concepts used in this study.
In Sect.~\ref{sec:inner_approx_COP_SOS}, we provide an SOS-based NN-type inner-approximation hierarchy for the COP cone over a general symmetric cone and discuss its dual.
In Sect.~\ref{sec:approx_COP_spectrum}, as generalizations of the approximation hierarchies given by de Klerk and Pasechnik~\cite{dP2002} and Y{\i}ld{\i}r{\i}m~\cite{Yildirim2012}, we provide dP- and Y{\i}ld{\i}r{\i}m-type approximation hierarchies described by finitely many semidefinite constraints for the COP matrix cone over the direct product of a nonnegative orthant and second-order cone.
We also discuss their concise expressions.
In Sect.~\ref{sec:compare_hierarchy}, we introduce the existing ZVP- and Lasserre-type approximation hierarchies that are applicable to the COP matrix cone over the direct product of a nonnegative orthant and second-order cone and compare them with the proposed approximation hierarchies.
In Sect.~\ref{sec:experiment}, we compare the approximation hierarchies numerically by solving optimization problems obtained by approximating the COP cone and investigate the effect of the concise expressions mentioned in Sect.~\ref{sec:approx_COP_spectrum}.
Finally, Sect.~\ref{sec:conclusion} provides concluding remarks.

\section{Preliminaries}\label{sec:preliminaries}
\subsection{Notation}\label{subsec:notation}
We use $\mathbb{N}$, $\mathbb{R}$, $\mathbb{R}^{n\times m}$, $\mathbb{S}^n$, and $\mathbb{S}_+^n$ to denote the set of nonnegative integers, set of real numbers, set of real $n\times m$ matrices, space of $n\times n$ symmetric matrices, and set of positive semidefinite matrices in $\mathbb{S}^n$, respectively.
For a finite set $I$, we use $\mathbb{R}^{I}$ and $\mathbb{S}^{I}$ to denote the $|I|$-dimensional Euclidean space with elements indexed by $I$ and space of $|I|\times |I|$ symmetric matrices with columns and rows indexed by $I$, respectively. Similarly, let $\mathbb{S}_+^{I}$ denote the set of positive semidefinite matrices in $\mathbb{S}^I$.
We use $\bm{e}_i$ to denote the vector with an $i$th element of 1 and the remaining elements of 0, whose size is determined from the context.
In addition, we use $\bm{0}$, $\bm{1}$, $\bm{O}$, $\bm{I}$, and $\bm{E}$ to denote the zero vector, vector with all elements 1, zero matrix, identity matrix, and matrix with all elements 1, respectively.
We sometimes use a subscript, such as $\bm{1}_n$ and $\bm{I}_n$, to specify the size.
Although all vectors that appear in this paper are column vectors, for notational convenience, the difference between a column and row may not be stated if it is clear from the context.
The Euclidean space $\mathbb{R}^n$ is endowed with the usual transpose inner product and $\|\cdot\|_2$ denotes the induced norm.
We use $S^n$ and $\Delta_=^n$ to denote the $n$-dimensional unit sphere and standard simplex in $\mathbb{R}^{n+1}$, i.e.,
\begin{align*}
S^n &= \{\bm{x}\in\mathbb{R}^{n+1}\mid \|\bm{x}\|_2 = 1\},\\ \Delta_=^n &= \{\bm{x}\in\mathbb{R}^{n+1} \mid \bm{x}^\top\bm{1} = 1 \text{ and } x_i\ge 0 \text{ for all $i = 1,\dots,n+1$}\},
\end{align*}
respectively.
For a set $\mathcal{X}$, we use $|\mathcal{X}|$, $\conv(\mathcal{X})$, $\cone(\mathcal{X})$, $\cl(\mathcal{X})$, $\setint(\mathcal{X})$, and $\partial(\mathcal{X})$ to denote the cardinality, convex hull, conical hull, closure, interior, and boundary of $\mathcal{X}$, respectively.
For two finite-dimensional real vector spaces $\mathbb{V}$ and $\mathbb{W}$, we use $\Hom(\mathbb{V},\mathbb{W})$ to denote the set of linear mappings from $\mathbb{V}$ to $\mathbb{W}$.
We use $\lfloor\cdot\rfloor$ and $\lceil\cdot\rceil$ to denote the floor and ceiling functions, respectively.

We call a nonempty set $\mathcal{K}$ in a finite-dimensional real vector space a cone if $\alpha x\in\mathcal{K}$ for all $\alpha>0$ and $x\in\mathcal{K}$.
For a cone $\mathcal{K}$ in a finite-dimensional real inner product space, $\mathcal{K}^*$ denotes its dual cone, i.e., the set of $x$ such that the inner product between $x$ and $y$ is greater than or equal to 0 for all $y\in\mathcal{K}$.
A cone $\mathcal{K}$ is said to be pointed if it contains no lines.
The following properties of a cone and its dual are well known:
\begin{theorem}[{\cite[Sect.~2.6.1]{BV2004}}]\label{thm:cone}
Let $\mathcal{K}$ be a cone.
\begin{enumerate}[(i)]
\item If $\mathcal{K}$ is pointed, closed, and convex, $\mathcal{K}^*$ has a nonempty interior.
\item If $\mathcal{K}$ is convex, $(\mathcal{K}^*)^* = \cl(\mathcal{K})$ holds.
If $\mathcal{K}$ is also closed, $(\mathcal{K}^*)^* = \mathcal{K}$ holds.
\end{enumerate}
\end{theorem}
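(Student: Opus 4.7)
The plan is to handle (i) and (ii) by the standard separating-hyperplane machinery: part (i) says pointedness on the primal side yields a full-dimensional dual, and part (ii) is the classical bipolar identity. Throughout I work in the ambient finite-dimensional inner product space with unit sphere $S$.

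For part (i), I would first form $C := \conv(\mathcal{K}\cap S)$. Since $\mathcal{K}$ is closed, $\mathcal{K}\cap S$ is compact, and in finite dimensions the convex hull of a compact set is compact, so $C$ is a compact convex set. I claim $0\notin C$: if $0 = \sum_{i=1}^k \lambda_i x_i$ with $\lambda_i>0$, $\sum_i \lambda_i = 1$, and $x_i\in\mathcal{K}\cap S$, then $\lambda_1 x_1 = -\sum_{i\geq 2}\lambda_i x_i$ lies in $\mathcal{K}\cap(-\mathcal{K})$, which equals $\{0\}$ by pointedness, contradicting $\|x_1\|_2 = 1$. Strict separation of the compact convex $C$ from the origin then yields $y$ and $\alpha>0$ with $\langle y, x\rangle \geq \alpha$ for every $x\in\mathcal{K}\cap S$. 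By positive homogeneity, $\langle y, x\rangle \geq \alpha\|x\|_2 \geq 0$ for every $x\in\mathcal{K}$, so $y\in\mathcal{K}^*$. Finally, for any $z$ with $\|z-y\|_2 < \alpha$, Cauchy--Schwarz gives $\langle z, x\rangle \geq (\alpha - \|z-y\|_2)\|x\|_2 \geq 0$ for all $x\in\mathcal{K}$, so this whole ball lies in $\mathcal{K}^*$, proving $y\in\setint(\mathcal{K}^*)$.

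For part (ii), the containment $\mathcal{K}\subseteq(\mathcal{K}^*)^*$ is immediate from the definitions, and because $(\mathcal{K}^*)^*$ is an intersection of closed half-spaces it is itself closed and convex, hence $\cl(\mathcal{K})\subseteq(\mathcal{K}^*)^*$. For the reverse inclusion, suppose $x\notin\cl(\mathcal{K})$. Since $\mathcal{K}$ is convex, so is $\cl(\mathcal{K})$, and in fact $\cl(\mathcal{K})$ is a closed convex cone. Strict separation of $x$ from $\cl(\mathcal{K})$ produces $y$ and $\beta\in\mathbb{R}$ with $\langle y, x\rangle < \beta \leq \langle y, z\rangle$ for every $z\in\cl(\mathcal{K})$. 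Taking $z = 0$ gives $\beta\leq 0$, and replacing $z$ by $tz$ for $t\to\infty$ forces $\langle y, z\rangle \geq 0$ for all $z\in\cl(\mathcal{K})$, so $y\in\mathcal{K}^*$; meanwhile $\langle y, x\rangle < \beta \leq 0$ shows $x\notin(\mathcal{K}^*)^*$. Combining, $(\mathcal{K}^*)^* = \cl(\mathcal{K})$, and the final assertion is the special case where $\cl(\mathcal{K}) = \mathcal{K}$.

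The one genuinely delicate point is (i)'s strict separation, which depends on $C$ being compact \emph{and} avoiding the origin; both facts rest on finite dimensionality (for the compactness of $\conv$ of a compact set) and on pointedness (for $0\notin C$). In (ii), the only subtlety is extracting $\langle y, z\rangle\geq 0$ from the separating inequality, which relies crucially on $\cl(\mathcal{K})$ being a cone so that $tz\in\cl(\mathcal{K})$ for all $t>0$; otherwise one would only get an affine separation rather than the homogeneous one needed to place $y$ in $\mathcal{K}^*$.
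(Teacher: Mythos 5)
Your proof is correct; the paper gives no proof of this theorem, citing it directly from Boyd and Vandenberghe, and your argument is precisely the standard one behind that reference: strict separation of $\conv(\mathcal{K}\cap S)$ from the origin (using pointedness and finite-dimensional compactness) for (i), and the bipolar/separation argument for (ii). The only omitted detail is the trivial degenerate case $\mathcal{K}\cap S=\emptyset$ (i.e.\ $\mathcal{K}\subseteq\{0\}$), where $\mathcal{K}^*$ is the whole space and (i) holds vacuously.
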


For a polynomial $f$, we use $\deg(f)$ to denote the degree of $f$.
Let $H^{n,m}$ be the set of homogeneous polynomials in $n$ variables of degree $m$ with real coefficients.
We then define $\Sigma^{n,2m} \coloneqq \conv\{\theta^2\mid\theta\in H^{n,m}\}$.
$\Sigma^{n,2m}$ is known to be a closed convex cone~\cite[Proposition~3.6]{Reznick1992}.
For $\bm{\alpha}\in\mathbb{N}^n$ and $\bm{x}\in \mathbb{R}^n$, we define $\bm{\alpha}! \coloneqq \prod_{i=1}^n\alpha_i$, $|\bm{\alpha}|\coloneqq \sum_{i=1}^n\alpha_i$, and $\bm{x}^{\bm{\alpha}}\coloneqq \prod_{i=1}^n x_i^{\alpha_i}$.
In addition, we define
\begin{align*}
\mathbb{I}^n_{=m} &\coloneqq \{\bm{\alpha} \in \mathbb{N}^n\mid |\bm{\alpha}|=m\},\\
\mathbb{I}^n_{\le m} &\coloneqq \{\bm{\alpha} \in \mathbb{N}^n\mid |\bm{\alpha}|\le m\},\\
\mathbb{N}^m_n &\coloneqq \{(i_1,\dots,i_m)\in \mathbb{N}^m \mid 1\le i_k \le n\text{ for all $k = 1,\dots,m$}\}.
\end{align*}
Under this notation, $\mathbb{R}^{\mathbb{I}^n_{=m}}$ is linearly isomorphic to $H^{n,m}$ by the mapping $(\theta_{\bm{\alpha}})_{\bm{\alpha}\in \mathbb{I}^n_{=m}}\mapsto \sum_{\bm{\alpha}\in \mathbb{I}^n_{=m}}\theta_{\bm{\alpha}}\bm{x}^{\bm{\alpha}}$.
Let $\mathfrak{S}_m$ be the symmetric group of order $m$.
Then, the group $\mathfrak{S}_m$ acts on the set $\mathbb{N}_n^m$ by $\sigma\cdot (i_1,\dots,i_m) = (i_{\sigma(1)},\dots,i_{\sigma(m)})$.
As mentioned in \cite[Sect.~4]{Dong2013}, a bijection exists between the set $\mathbb{I}^n_{=m}$ and a complete set of the representatives of $\mathfrak{S}_m$-orbits in $\mathbb{N}_n^m$.
As the set
\begin{equation}
\{(i_1,\dots,i_m)\mid 1\le i_1\le \cdots \le i_m\le n\} \label{eq:complete_set_Nnm}
\end{equation}
is a complete set of the representatives of $\mathfrak{S}_m$-orbits in $\mathbb{N}_n^m$, we define $[\bm{\alpha}]\in\mathbb{N}^m_n$ as the element of \eqref{eq:complete_set_Nnm} corresponding to $\bm{\alpha}\in \mathbb{I}^n_{=m}$, i.e.,
\begin{equation*}
[\bm{\alpha}] = (\underbrace{1,\dots,1}_{\text{$\alpha_1$ factors}},\dots,\underbrace{n,\dots,n}_{\text{$\alpha_n$ factors}}).
\end{equation*}

\subsection{Euclidean Jordan algebra and symmetric cone}
\label{subsec:EJA}
A finite-dimensional real vector space $\mathbb{E}$ equipped with a bilinear mapping (product) $\circ\colon\mathbb{E}\times\mathbb{E}\to \mathbb{E}$ is said to be a Jordan algebra if the following two conditions hold for all $x,y\in \mathbb{E}$:
\begin{enumerate}[(J1)]
\item $x \circ y = y \circ x$
\item $x\circ ((x\circ x)\circ y) = (x\circ x)\circ(x\circ y)$
\end{enumerate}
In this study, we assume that a Jordan algebra has an identity element $e$ for the product.
A Jordan algebra $(\mathbb{E},\circ)$ is said to be Euclidean if there exists an associative inner product $\bullet$ on $\mathbb{E}$ such that
\begin{enumerate}[(J3)]
\setcounter{enumi}{2}
\item $(x\circ y)\bullet z = x\bullet (y\circ z)$
\end{enumerate}
for all $x,y,z\in\mathbb{E}$.
Throughout this study, we fix an associative inner product $\bullet$ on a Euclidean Jordan algebra $(\mathbb{E},\circ)$ and regard $(\mathbb{E},\circ,\bullet)$ as a finite-dimensional real inner product space.

Let $(\mathbb{E},\circ,\bullet)$ be a Euclidean Jordan algebra.
An element $c\in\mathbb{E}$ is called an idempotent if $c\circ c = c$.
In addition, an idempotent $c$ is called primitive if it is nonzero and cannot be written as the sum of two nonzero idempotents.
Two elements $c,d\in \mathbb{E}$ are called orthogonal if $c\circ d = 0$.
The system $c_1,\dots,c_k$ is called a complete system of orthogonal idempotents if each $c_i$ is an idempotent, $c_i\circ c_j = 0$ if $i\neq j$, and $\sum_{i=1}^kc_i = e$.
In addition, if each $c_i$ is also primitive, the system is called a Jordan frame.
Each Jordan frame is known to consist of exactly $\rk$ elements, where $\rk$ is the rank of the Euclidean Jordan algebra $(\mathbb{E},\circ,\bullet)$ and the rank depends only on the algebra~\cite[Sect.~I\hspace{-1pt}I\hspace{-1pt}I.1]{FK1994}.
Here, for the convenience of the proofs, we consider an ordered Jordan frame and let $\mathfrak{F}(\mathbb{E})$ be the set of ordered Jordan frames, i.e.,
\begin{equation*}
\mathfrak{F}(\mathbb{E}) = \{(c_1,\dots,c_{\rk})\mid \text{The system $c_1,\dots,c_{\rk}$ is a Jordan frame}\}.
\end{equation*}
Note that $\mathfrak{F}(\mathbb{E})$ is a compact subset in $\mathbb{E}^{\rk}$~\cite[Exercise~I\hspace{-1pt}V.5]{FK1994}.
Each element of $\mathbb{E}$ can be decomposed into a linear combination of a Jordan frame~\cite[Theorem~I\hspace{-1pt}I\hspace{-1pt}I.1.2]{FK1994}.
In particular, for each $x\in\mathbb{E}$, there exist $x_1,\dots,x_{\rk}\in\mathbb{R}$ and $(c_1,\dots,c_{\rk})\in \mathfrak{F}(\mathbb{E})$ such that
\begin{equation}
x = \sum_{i=1}^{\rk}x_ic_i.\label{eq:spectral}
\end{equation}

The symmetric cone $\mathbb{E}_+$ associated with the Euclidean Jordan algebra $(\mathbb{E},\circ,\bullet)$ is defined as $\mathbb{E}_+ \coloneqq \{x\circ x\mid x\in\mathbb{E}\}$.
Note that when each $x\in\mathbb{E}_+$ is decomposed into the form~\eqref{eq:spectral}, all coefficients $x_i$ are nonnegative.
Conversely, for any nonnegative scalars $x_1,\dots,x_{\rk}$ and $(c_1,\dots,c_{\rk})\in\mathfrak{F}(\mathbb{E})$, it follows that $\sum_{i=1}^{\rk}x_ic_i\in\mathbb{E}_+$.

We now show some examples of the symmetric cones frequently used in this paper.

\begin{example}[nonnegative orthant]\label{ex:nno}
Let $\mathbb{E}$ be an $n$-dimensional Euclidean space $\mathbb{R}^n$.
If we set $\bm{x}\circ \bm{y} \coloneqq (x_1y_1,\dots,x_ny_n)$ and $\bm{x}\bullet \bm{y} \coloneqq \bm{x}^\top\bm{y}$ for $\bm{x},\bm{y}\in\mathbb{E}$, then $(\mathbb{E},\circ,\bullet)$ is a Euclidean Jordan algebra, and the induced symmetric cone $\mathbb{E}_+$ is the nonnegative orthant $\mathbb{R}_+^n = \{\bm{x}\in\mathbb{R}^n \mid x_i\ge 0\text{ for all $i = 1,\dots,n$}\}$.
The set $\mathfrak{F}(\mathbb{E})$ of ordered Jordan frames is $\{(\bm{e}_{\sigma(1)},\dots,\bm{e}_{\sigma(n)}) \mid \sigma\in\mathfrak{S}_n\}$.
\end{example}

\begin{example}[second-order cone]\label{ex:soc}
Let $\mathbb{E}$ be an $n$-dimensional Euclidean space $\mathbb{R}^n$ with $n\ge 2$.
If we set $\bm{x}\circ \bm{y} \coloneqq (\bm{x}^\top\bm{y},x_1\bm{y}_{2:n} + y_1\bm{x}_{2:n})$ and $\bm{x}\bullet\bm{y} \coloneqq \bm{x}^\top\bm{y}$ for $\bm{x}=(x_1,\bm{x}_{2:n})$, $\bm{y}=(y_1,\bm{y}_{2:n})\in \mathbb{E}$, then $(\mathbb{E},\circ,\bullet)$ is a Euclidean Jordan algebra, and the induced symmetric cone $\mathbb{E}_+$ is the second-order cone $\mathbb{L}^n = \{(x_1,\bm{x}_{2:n})\in \mathbb{R}^n \mid x_1\ge \|\bm{x}_{2:n}\|_2\}$.
The set $\mathfrak{F}(\mathbb{E})$ of ordered Jordan frames is
\begin{equation*}
\left\{\left(\frac{1}{2}\begin{pmatrix}
1\\
\bm{v}
\end{pmatrix},\frac{1}{2}\begin{pmatrix}
1\\
-\bm{v}
\end{pmatrix}\right) \relmiddle| \bm{v}\in S^{n-2}\right\}.
\end{equation*}
\end{example}

\begin{example}[semidefinite cone]\label{ex:sdc}
Let $\mathbb{E}$ be the space $\mathbb{S}^n$ of $n\times n$ symmetric matrices.
If we set $\bm{X}\circ \bm{Y} \coloneqq (\bm{X}\bm{Y} + \bm{Y}\bm{X})/2$ and $\bm{X}\bullet \bm{Y} \coloneqq \sum_{i,j=1}^nX_{ij}Y_{ij}$ for $\bm{X},\bm{Y}\in \mathbb{E}$, then $(\mathbb{E},\circ,\bullet)$ is a Euclidean Jordan algebra, and the induced symmetric cone $\mathbb{E}_+$ is the semidefinite cone $\mathbb{S}_+^n$.
\end{example}

Consider the case in which a Euclidean Jordan algebra $(\mathbb{E},\circ,\bullet)$ can be written as the direct product (sum) of two Euclidean Jordan algebras  $(\mathbb{E}_i,\circ_i,\bullet_i)$ with rank $\rk_i$ and identity element $e_{\mathbb{E}_i}$ $(i = 1,2)$.
Note that the following discussion can be directly extended to the case of finitely many Euclidean Jordan algebras.
The product $\circ$ and associative inner product $\bullet$ are defined as follows:
\begin{align*}
(x_1,x_2)\circ (y_1,y_2) &\coloneqq (x_1\circ_1 y_1, x_2\circ_2 y_2),\\
(x_1,x_2)\bullet (y_1,y_2) &\coloneqq x_1\bullet_1 y_1 + x_2\bullet_2 y_2
\end{align*}
for $(x_1,x_2),(y_1,y_2) \in \mathbb{E} = \mathbb{E}_1\times \mathbb{E}_2$, the rank of $\mathbb{E}$ is $\rk_1 + \rk_2$, and the identity element $e$ of $\mathbb{E}$ is $(e_{\mathbb{E}_1},e_{\mathbb{E}_2})$.
In the following, we derive the set of ordered Jordan frames of $\mathbb{E}$.
This result will be exploited in Sect.~\ref{sec:approx_COP_spectrum} to obtain approximation hierarchies described by finitely many semidefinite constraints.

\begin{lemma}\label{lem:primitive_idempotent}
For a primitive idempotent $f = (f_1,f_2)\in \mathbb{E}$, exactly one of the following two statements holds:
\begin{enumerate}[(a)]
\item $f_{1}$ is a primitive idempotent of $\mathbb{E}_1$ and $f_{2} = 0$. \label{enum:f1_primitive}
\item $f_{1} = 0$ and $f_{2}$ is a primitive idempotent of $\mathbb{E}_2$. \label{enum:f2_primitive}
\end{enumerate}
\end{lemma}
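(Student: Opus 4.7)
My plan is to proceed by the obvious case analysis on $(f_1,f_2)$, using only the componentwise definition of the product $\circ$ on $\mathbb{E} = \mathbb{E}_1 \times \mathbb{E}_2$ together with the primitivity hypothesis. First I would note that since $f\circ f = f$ unpacks as $(f_1\circ_1 f_1, f_2\circ_2 f_2) = (f_1,f_2)$, each of $f_1$ and $f_2$ is either zero or an idempotent of its respective algebra.

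Next, I would rule out the possibility that both $f_1\neq 0$ and $f_2\neq 0$. In that case the decomposition
\begin{equation*}
f = (f_1,0) + (0,f_2)
\end{equation*}
expresses $f$ as a sum of two elements of $\mathbb{E}$, each of which is clearly a nonzero idempotent (componentwise idempotency is trivial, and neither summand vanishes). This contradicts the primitivity of $f$. Since $f$ is also nonzero by definition of primitivity, we conclude that exactly one of $f_1, f_2$ is zero, giving the mutual exclusivity of \eqref{enum:f1_primitive} and \eqref{enum:f2_primitive}.

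It then remains to show, in the case $f_2 = 0$ and $f_1\neq 0$ (the other case being symmetric), that $f_1$ is primitive in $\mathbb{E}_1$. Suppose for contradiction that $f_1 = g + h$ with $g,h$ nonzero idempotents of $\mathbb{E}_1$. Then $f = (g,0) + (h,0)$, and each summand is a nonzero idempotent of $\mathbb{E}$ by the componentwise definition of $\circ$, again contradicting the primitivity of $f$.

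The argument is essentially bookkeeping; the only mild subtlety is making sure the definition of ``primitive'' is applied consistently (i.e.\ that a sum of two nonzero idempotents of $\mathbb{E}_1$ really does lift to a sum of two nonzero idempotents of $\mathbb{E}$, which is immediate from the product being componentwise). I do not anticipate any genuine obstacle.
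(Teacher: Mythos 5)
Your proof is correct and follows essentially the same route as the paper's: rule out $f_1\neq 0$, $f_2\neq 0$ via the decomposition $f=(f_1,0)+(0,f_2)$, then deduce primitivity of the nonzero component. You spell out the last step (lifting a decomposition of $f_1$ to a decomposition of $f$) slightly more explicitly than the paper does, but the argument is the same.
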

\begin{proof}
We assume that $f_1 \neq 0$.
If $f_2 \neq 0$, then $f$ can be decomposed into the sum of two nonzero elements $(f_1,0)$ and $(0,f_2)$.
The two elements are actually idempotents of $\mathbb{E}$, which contradicts $f$ being primitive.
Thus, we have $f_2 = 0$.
Because $f = (f_1,0)$ is a primitive idempotent, $f_1$ is also a primitive idempotent.
This case falls under the case~\eqref{enum:f1_primitive}.

Next, we assume that $f_1 = 0$.
In this case, as in the above discussion, we note that $f_2$ is a primitive idempotent.
This case falls under the case~\eqref{enum:f2_primitive}.
\end{proof}

\begin{proposition}[The set of ordered Jordan frames of $\mathbb{E}$]\label{prop:primitive_idempotent}
$(f_1,\dots,f_{\rk_1 + \rk_2}) \in \mathfrak{F}(\mathbb{E})$ if and only if there exists a partition $(I_1,I_2)$ of $\{1,\dots,\rk_1+\rk_2\}$ such that the following two conditions hold:
\begin{enumerate}[(i)]
\item $f_i = (f_{1i},0)\in\mathbb{E}$ for all $i\in I_1$ and $(f_{1i})_{i\in I_1} \in \mathfrak{F}(\mathbb{E}_1)$.
\item $f_i = (0,f_{2i})\in \mathbb{E}$ for all $i\in I_2$ and $(f_{2i})_{i\in I_2} \in \mathfrak{F}(\mathbb{E}_2)$.
\end{enumerate}
\end{proposition}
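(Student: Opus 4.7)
The plan is to handle the two directions separately, with Lemma~\ref{lem:primitive_idempotent} doing most of the heavy lifting for the forward direction and a short primitivity argument being the only real content on the reverse direction.

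For the ``only if'' direction, I would suppose $(f_1,\dots,f_{\rk_1+\rk_2})\in\mathfrak{F}(\mathbb{E})$. Every $f_i$ is then a primitive idempotent of $\mathbb{E}$, so by Lemma~\ref{lem:primitive_idempotent} exactly one of the cases~(\ref{enum:f1_primitive}) or~(\ref{enum:f2_primitive}) holds for each $i$. Accordingly, I would let $I_1$ be the set of indices for which case~(\ref{enum:f1_primitive}) holds and $I_2$ its complement, producing the required partition. To show $(f_{1i})_{i\in I_1}\in\mathfrak{F}(\mathbb{E}_1)$, I would verify the three defining properties of a Jordan frame: primitivity of each $f_{1i}$ is supplied by Lemma~\ref{lem:primitive_idempotent}; pairwise orthogonality follows by expanding $f_i\circ f_j=(f_{1i}\circ_1 f_{1j},0)=0$ for distinct $i,j\in I_1$ and reading the first component; and $\sum_{i\in I_1}f_{1i}=e_1$ is read off as the first component of $\sum_i f_i=(e_1,e_2)$. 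The symmetric argument yields $(f_{2i})_{i\in I_2}\in\mathfrak{F}(\mathbb{E}_2)$, and $|I_1|=\rk_1$, $|I_2|=\rk_2$ are automatic since each $\mathfrak{F}(\mathbb{E}_k)$ has cardinality exactly $\rk_k$.

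For the ``if'' direction, given such a partition together with the Jordan frames of the factors, I would verify the three defining properties of a Jordan frame of $\mathbb{E}$. Idempotency of each $f_i$ is immediate componentwise. Pairwise orthogonality splits into three cases: for $i,j$ in the same block $I_k$ it reduces to orthogonality in $\mathbb{E}_k$, while for $i,j$ in different blocks one factor of the product is already zero. Summation to $e=(e_1,e_2)$ is obtained by grouping the sum along $I_1$ and $I_2$.

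The only step that is not purely bookkeeping, and which I expect to be the main (small) obstacle, is verifying primitivity of each $f_i$ as an idempotent of the product algebra $\mathbb{E}$, since primitivity within a factor does not a priori imply primitivity in the product. For $i\in I_1$ with $f_i=(f_{1i},0)$, I would argue by contradiction: suppose $f_i=g+h$ with $g=(g_1,g_2)$ and $h=(h_1,h_2)$ nonzero idempotents of $\mathbb{E}$. Then $g_2,h_2$ are idempotents of $\mathbb{E}_2$ satisfying $g_2+h_2=0$, so $h_2=-g_2$. Combining $g_2\circ_2 g_2=g_2$ with $h_2\circ_2 h_2=h_2$ gives $g_2=-g_2$, hence $g_2=h_2=0$. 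Thus $g$ and $h$ both lie in $\mathbb{E}_1\times\{0\}$, and primitivity of $f_{1i}$ in $\mathbb{E}_1$ forces $g=0$ or $h=0$, a contradiction. The symmetric argument handles $i\in I_2$.
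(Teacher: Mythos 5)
Your proposal is correct and follows essentially the same route as the paper: the ``only if'' direction reads the partition off Lemma~\ref{lem:primitive_idempotent} and extracts completeness and orthogonality componentwise from $\sum_i f_i = e$ and $f_i\circ f_j = 0$, while the ``if'' direction reduces to showing that $(f,0)$ is primitive in $\mathbb{E}$ when $f$ is primitive in $\mathbb{E}_1$, exactly as the paper does. Your derivation that the second components vanish (from $g_2+h_2=0$ with both idempotent) and the appeal to primitivity of $f_{1i}$ match the paper's argument up to phrasing it as a contradiction rather than directly.
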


\begin{proof}
We first prove the ``if'' part.
Because $(f_1,\dots,f_{\rk_1+\rk_2})$ satisfying the two conditions is evidently a complete system of orthogonal idempotents, we prove only that each $f_i$ is primitive.
To prove this, showing that $(f,0)\in\mathbb{E}$ is primitive if $f\in \mathbb{E}_1$ is primitive is sufficient.
Assume that $(f,0)$ can be written as the sum of two idempotents $(f_1,f_2)$ and $(g_1,g_2)$, i.e.,
\begin{equation}
(f,0) = (f_1,f_2) + (g_1,g_2). \label{eq:proof_primitive}
\end{equation}
First, \eqref{eq:proof_primitive} implies that $f_2 = -g_2$.
In addition, we note that $f_1$, $g_1$, $f_2$, and $g_2$ are idempotents because $(f_1,f_2)$ and $(g_1,g_2)$ are idempotents.
Therefore, $f_2 = g_2 = 0$.
Second, \eqref{eq:proof_primitive} implies again that $f = f_1 + g_1$.
As $f_1$ and $g_1$ are idempotents and $f$ is primitive, either $f_1$ or $g_1$ must be 0. We can assume that $f_1 = 0$ without loss of generality.
We then obtain $(f_1,f_2) = 0$, which implies that $(f,0)$ is primitive.

We now prove the ``only if'' part.
Let $(f_1,\dots,f_{\rk_1 + \rk_2}) \in \mathfrak{F}(\mathbb{E})$ and set $f_i = (f_{1i},f_{2i})$ for each $i$.
Then, each $f_i = (f_{1i},f_{2i})$ falls into exactly one of the two cases in Lemma~\ref{lem:primitive_idempotent}; thus, we define
\begin{align*}
I_1 &\coloneqq \{i \in \{1,\dots,{\rk}_1 + {\rk}_2\}\mid \text{$f_{1i}$ is a primitive idempotent and $f_{2i} = 0$}\},\\
I_2 &\coloneqq \{i \in \{1,\dots,{\rk}_1 + {\rk}_2\} \mid \text{$f_{1i} = 0$ and $f_{2i}$ is a primitive idempotent}\}.
\end{align*}
Evidently, $(I_1,I_2)$ is a partition of $\{1,\dots,\rk_1 + \rk_2\}$.
In the following, we show that $(f_{1i})_{i\in I_1}\in \mathfrak{F}(\mathbb{E}_1)$ and $(f_{2i})_{i\in I_2}\in \mathfrak{F}(\mathbb{E}_2)$.
From the assumption on $(f_1,\dots,f_{\rk_1 + \rk_2})$, it follows that
\begin{equation*}
e = (e_{\mathbb{E}_1},e_{\mathbb{E}_2}) = \sum_{i=1}^{\rk_1 + \rk_2}f_i = \left(\sum_{i\in I_1}f_{1i},\sum_{i\in I_2}f_{2i}\right),
\end{equation*}
from which we obtain $\sum_{i\in I_1}f_{1i} = e_{\mathbb{E}_1}$ and $\sum_{i\in I_2}f_{2i} = e_{\mathbb{E}_2}$.
In addition, it follows that $0 = f_i \circ f_j = (f_{1i}\circ_1 f_{1j},f_{2i}\circ_2 f_{2j})$
for any $i\neq j$.
In particular, we have $f_{1i}\circ_1 f_{1j} = 0$ for all $i\neq j\in I_1$ and $f_{2i}\circ_2 f_{2j} = 0$ for all $i\neq j\in I_2$.
\end{proof}

\subsection{Symmetric tensor space}\label{subsec:tensor}
Let $(\mathbb{V},(\cdot,\cdot))$ be an $n$-dimensional real inner product space.
Note that $\mathbb{V}$ can be identified with the dual space $\Hom(\mathbb{V},\mathbb{R})$ by the natural isomorphism $x\mapsto (x,\cdot)$.
We use
\begin{equation*}
\mathbb{V}^{\otimes m} \coloneqq \underbrace{\mathbb{V}\otimes\cdots\otimes\mathbb{V}}_{\text{$m$ factors}}
\end{equation*}
to denote the tensor space of order $m$ over $\mathbb{V}$.

Let $v_1,\dots,v_n$ be a basis for $\mathbb{V}$.
Then, $\tilde{v}_{i_1\cdots i_m} \coloneqq v_{i_1}\otimes\cdots \otimes v_{i_m}\ ((i_1,\dots,i_m)\in \mathbb{N}_n^m)$ form a basis for $\mathbb{V}^{\otimes m}$.
That is, each $\mathcal{A}\in\mathbb{V}^{\otimes m}$ can be written in the following form:
\begin{equation}
\mathcal{A} = \sum_{i_1,\dots,i_m=1}^n\mathcal{A}_{i_1\cdots i_m}\tilde{v}_{i_1\cdots i_m} \label{eq:tensor_representation}
\end{equation}
with coefficients $\mathcal{A}_{i_1\cdots i_m}\in\mathbb{R}$.
For $\sigma\in\mathfrak{S}_m$, the linear transformation $\pi_{\sigma}$ on $\mathbb{V}^{\otimes m}$ is defined by $\pi_{\sigma}(\tilde{v}_{i_1\cdots i_m}) \coloneqq \tilde{v}_{i_{\sigma(1)}\cdots i_{\sigma(m)}}$.
The definition of $\pi_{\sigma}$ does not depend on the choice of the basis for $\mathbb{V}$.
Then,
\begin{equation*}
\mathcal{S}^{n,m}(\mathbb{V}) \coloneqq \{\mathcal{A}\in\mathbb{V}^{\otimes m} \mid \pi_{\sigma}\mathcal{A} = \mathcal{A}\ \text{for all $\sigma \in\mathfrak{S}_m$}\}
\end{equation*}
denotes the symmetric tensor space of order $m$ over $\mathbb{V}$, which is a subspace of $\mathbb{V}^{\otimes m}$.
Note that the symmetric tensor $\mathcal{A}\in\mathcal{S}^{n,m}(\mathbb{V})$ with the form~\eqref{eq:tensor_representation} depends only on the coefficients in the form of $\mathcal{A}_{[\bm{\alpha}]}$ $(\bm{\alpha}\in \mathbb{I}_{=m}^n)$.
Let $\mathscr{S}$ be the linear transformation on $\mathbb{V}^{\otimes m}$ defined as
\begin{equation*}
\mathscr{S} \coloneqq \frac{1}{m!}\sum_{\sigma\in\mathfrak{S}_m}\pi_{\sigma}.
\end{equation*}
Then, $\mathscr{S}\mathcal{A} \in \mathcal{S}^{n,m}(\mathbb{V})$ for each $\mathcal{A}\in\mathbb{V}^{\otimes m}$ and $\mathscr{S}\tilde{v}_{[\bm{\alpha}]}$ $(\bm{\alpha}\in \mathbb{I}_{=m}^n)$ form a basis for $\mathcal{S}^{n,m}(\mathbb{V})$.
For $x\in\mathbb{V}$, let
\begin{equation*}
x^{\otimes m} \coloneqq \underbrace{x\otimes \cdots \otimes x}_{\text{$m$ factors}} \in \mathcal{S}^{n,m}(\mathbb{V}).
\end{equation*}

In particular, we consider the case of $\mathbb{V} = \mathbb{R}^n$ with the canonical basis $\bm{e}_1,\dots,\bm{e}_n$ and write $\mathcal{S}^{n,m}(\mathbb{R}^n)$ as $\mathcal{S}^{n,m}$.
Then, each element $\mathcal{A}\in\mathcal{S}^{n,m}$ can be considered a multi-dimensional array; thus, we write $\mathcal{A}_{i_1\cdots i_m}$ for the $(i_1,\dots,i_m)$th element of $\mathcal{A}$.
Note that the symmetric tensor space $\mathcal{S}^{n,2}$ of order two equals the space $\mathbb{S}^n$ of the symmetric matrices.

Let $\langle\cdot,\cdot\rangle$ be the inner product on $\mathbb{V}^{\otimes m}$ induced by that on $\mathbb{V}$.
That is, it satisfies $\langle x_1\otimes \cdots\otimes x_m,y_1\otimes \cdots\otimes y_m\rangle = \prod_{i=1}^m(x_i,y_i)$ for $x_1\otimes \cdots\otimes x_m$, $y_1\otimes \cdots\otimes y_m\in\mathbb{V}^{\otimes m}$.
We write $\|\cdot\|_{\rm F}$ for the norm on $\mathbb{V}^{\otimes m}$ induced by the inner product $\langle \cdot,\cdot\rangle$.

Using the inner product, we note that $\mathcal{S}^{n,m}(\mathbb{V})$ and $H^{n,m}$ are linearly isomorphic.
Indeed, let $\phi \in \Hom(\mathbb{V},\mathbb{R}^n)$ be the linear isomorphism induced by the basis $v_1,\dots,v_n$.
Then, the mapping $\psi\colon\mathcal{S}^{n,m}(\mathbb{V})\to H^{n,m}$ defined by
\begin{equation}
\psi(\mathcal{A}) \coloneqq \langle \mathcal{A},\phi^{-1}(\bm{x})^{\otimes m}\rangle \label{eq:isom_Snm(V)_Hnm}
\end{equation}
is a linear isomorphism.

In the following proofs, for convenience, we fix an orthonormal basis $v_1,\dots,v_n$ for $\mathbb{V}$ arbitrarily.
The following lemma describes a property of the inner product on the (symmetric) tensor space.

\begin{lemma}\label{lem:tensor_inner_product}
For $\mathcal{A}\in \mathbb{V}^{\otimes m}$ and $\mathcal{B}\in\mathcal{S}^{n,m}(\mathbb{V})$, it follows that $\langle \mathscr{S}\mathcal{A},\mathcal{B}\rangle = \langle \mathcal{A},\mathcal{B}\rangle$.
\end{lemma}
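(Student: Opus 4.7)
The plan is to reduce the equality to a symmetry property of the inner product under the permutation operators $\pi_\sigma$. By linearity of the inner product and the definition of $\mathscr{S}$, one can write
\begin{equation*}
\langle \mathscr{S}\mathcal{A},\mathcal{B}\rangle = \frac{1}{m!}\sum_{\sigma\in\mathfrak{S}_m}\langle \pi_\sigma \mathcal{A},\mathcal{B}\rangle,
\end{equation*}
so the claim will follow once I establish $\langle \pi_\sigma \mathcal{A},\mathcal{B}\rangle = \langle \mathcal{A},\mathcal{B}\rangle$ for every $\sigma\in \mathfrak{S}_m$.

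To prove the latter, I would first exploit the orthonormal basis $v_1,\dots,v_n$ of $\mathbb{V}$ fixed before the lemma. Since $\langle \cdot,\cdot\rangle$ is induced from $(\cdot,\cdot)$, the products $\tilde v_{i_1\cdots i_m}$ with $(i_1,\dots,i_m)\in \mathbb{N}_n^m$ form an orthonormal basis of $\mathbb{V}^{\otimes m}$. The map $\pi_\sigma$ merely permutes this orthonormal basis, so $\pi_\sigma$ is an orthogonal transformation on $\mathbb{V}^{\otimes m}$; in particular $\langle \pi_\sigma X,\pi_\sigma Y\rangle = \langle X,Y\rangle$ for all $X,Y\in\mathbb{V}^{\otimes m}$. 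Next, because $\mathcal{B}\in\mathcal{S}^{n,m}(\mathbb{V})$, the defining property of the symmetric tensor space gives $\pi_\sigma \mathcal{B} = \mathcal{B}$ for every $\sigma$.

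Combining these two observations yields
\begin{equation*}
\langle \pi_\sigma \mathcal{A},\mathcal{B}\rangle = \langle \pi_\sigma \mathcal{A},\pi_\sigma \mathcal{B}\rangle = \langle \mathcal{A},\mathcal{B}\rangle,
\end{equation*}
which is independent of $\sigma$. Substituting back into the averaged sum gives $\langle \mathscr{S}\mathcal{A},\mathcal{B}\rangle = \frac{1}{m!}\cdot m!\cdot \langle \mathcal{A},\mathcal{B}\rangle = \langle \mathcal{A},\mathcal{B}\rangle$.

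There is no real obstacle here; the only subtlety is verifying that $\pi_\sigma$ is orthogonal, which is where the choice of an \emph{orthonormal} basis (rather than an arbitrary basis) matters—this is exactly why the preceding paragraph of the excerpt remarks that an orthonormal basis is fixed for convenience in the proofs.
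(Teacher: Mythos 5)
Your proof is correct and follows essentially the same route as the paper: both reduce to showing $\langle \pi_\sigma\mathcal{A},\mathcal{B}\rangle = \langle\mathcal{A},\mathcal{B}\rangle$ for each $\sigma$ using the orthonormal basis and the symmetry of $\mathcal{B}$. The only difference is cosmetic—you package the coordinate manipulation as ``$\pi_\sigma$ is orthogonal and fixes $\mathcal{B}$,'' whereas the paper carries out the same cancellation explicitly in components.
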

\begin{proof}
Using the orthonormal basis $v_1,\dots,v_n$ for $\mathbb{V}$, we write $\mathcal{A}$ and $\mathcal{B}$ in the form \eqref{eq:tensor_representation}.
It then follows from the symmetry of $\mathcal{B}$ that
\begin{align*}
\langle \mathscr{S}\mathcal{A},\mathcal{B}\rangle &= \left\langle \frac{1}{m!}\sum_{\sigma\in\mathfrak{S}_m}\pi_{\sigma}\mathcal{A},\mathcal{B}\right\rangle\\
&= \frac{1}{m!}\sum_{\sigma\in\mathfrak{S}_m}\sum_{i_1,\dots,i_m=1}^n \mathcal{A}_{i_1\cdots i_m}\mathcal{B}_{i_{\sigma(1)}\cdots i_{\sigma(m)}}\\
&= \frac{1}{m!}\sum_{\sigma\in\mathfrak{S}_m}\sum_{i_1,\dots,i_m=1}^n \mathcal{A}_{i_1\cdots i_m}\mathcal{B}_{i_1\cdots i_m}\\
&= \frac{1}{m!}\sum_{\sigma\in\mathfrak{S}_m} \langle \mathcal{A},\mathcal{B}\rangle\\
&= \langle \mathcal{A},\mathcal{B}\rangle.
\end{align*}
\end{proof}

Consider the case of $\mathbb{V} = \mathbb{R}^n$ with the canonical basis $\bm{e}_1,\dots,\bm{e}_n$.
The following lemma provides an orthonormal basis for the symmetric tensor space $\mathcal{S}^{n,m}$ and the representation of the elements of $\mathcal{S}^{n,m}$ with the orthonormal basis.
\begin{lemma}\label{lem:orth_basis}
We now define $\mathcal{F}_{\bm{\alpha}} \coloneqq \sqrt{m!/\bm{\alpha}!}\mathscr{S}\tilde{\bm{e}}_{[\bm{\alpha}]}$
for each $\bm{\alpha}\in \mathbb{I}_{=m}^n$.
Then, $(\mathcal{F}_{\bm{\alpha}})_{\bm{\alpha}\in \mathbb{I}_{=m}^n}$ is an orthonormal basis for $\mathcal{S}^{n,m}$.
In addition, using the orthonormal basis, we can represent each $\mathcal{A}\in\mathcal{S}^{n,m}$ as $\mathcal{A} = \sum_{\bm{\alpha}\in \mathbb{I}_{=m}^n}\sqrt{m !/\bm{\alpha}!}\mathcal{A}_{[\bm{\alpha}]}\mathcal{F}_{\bm{\alpha}}$.
\end{lemma}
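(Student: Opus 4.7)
The plan is to deduce both claims from Lemma~\ref{lem:tensor_inner_product}, the basis statement for $\mathcal{S}^{n,m}(\mathbb{V})$ already recorded just before that lemma, and a combinatorial count of the stabilizer of the multi-index $[\bm{\alpha}]$ under the action of $\mathfrak{S}_m$. Since $\mathcal{F}_{\bm{\alpha}}$ is a nonzero scalar multiple of $\mathscr{S}\tilde{\bm{e}}_{[\bm{\alpha}]}$ and the family $(\mathscr{S}\tilde{\bm{e}}_{[\bm{\alpha}]})_{\bm{\alpha}\in\mathbb{I}_{=m}^n}$ has already been identified as a basis for $\mathcal{S}^{n,m}$, the family $(\mathcal{F}_{\bm{\alpha}})_{\bm{\alpha}\in\mathbb{I}_{=m}^n}$ is automatically a basis. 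What remains for the first assertion is orthonormality.

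To establish orthonormality I would compute
\begin{equation*}
\langle \mathcal{F}_{\bm{\alpha}}, \mathcal{F}_{\bm{\beta}}\rangle = \sqrt{\tfrac{m!}{\bm{\alpha}!}}\sqrt{\tfrac{m!}{\bm{\beta}!}}\langle \mathscr{S}\tilde{\bm{e}}_{[\bm{\alpha}]}, \mathscr{S}\tilde{\bm{e}}_{[\bm{\beta}]}\rangle.
\end{equation*}
Since $\mathscr{S}\tilde{\bm{e}}_{[\bm{\beta}]} \in \mathcal{S}^{n,m}$, Lemma~\ref{lem:tensor_inner_product} strips one copy of $\mathscr{S}$ and leaves $\langle \tilde{\bm{e}}_{[\bm{\alpha}]}, \mathscr{S}\tilde{\bm{e}}_{[\bm{\beta}]}\rangle = \tfrac{1}{m!}\sum_{\sigma\in\mathfrak{S}_m}\langle \tilde{\bm{e}}_{[\bm{\alpha}]}, \tilde{\bm{e}}_{\sigma\cdot[\bm{\beta}]}\rangle$. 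Because $\bm{e}_1,\dots,\bm{e}_n$ is orthonormal, the summand is $1$ when $[\bm{\alpha}] = \sigma\cdot[\bm{\beta}]$ and $0$ otherwise. Distinct elements of $\mathbb{I}_{=m}^n$ give distinct $\mathfrak{S}_m$-orbit representatives, so this forces $\bm{\alpha} = \bm{\beta}$ for a nonzero contribution.

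The key combinatorial step is then the stabilizer count: the set of $\sigma\in\mathfrak{S}_m$ fixing $[\bm{\alpha}] = (\underbrace{1,\dots,1}_{\alpha_1},\dots,\underbrace{n,\dots,n}_{\alpha_n})$ is the Young subgroup $\mathfrak{S}_{\alpha_1}\times\cdots\times\mathfrak{S}_{\alpha_n}$, whose cardinality is exactly $\bm{\alpha}! = \prod_i\alpha_i!$. This gives $\langle \tilde{\bm{e}}_{[\bm{\alpha}]}, \mathscr{S}\tilde{\bm{e}}_{[\bm{\alpha}]}\rangle = \bm{\alpha}!/m!$, and hence $\langle\mathcal{F}_{\bm{\alpha}},\mathcal{F}_{\bm{\alpha}}\rangle = 1$ after multiplication by the normalization constants, while $\langle\mathcal{F}_{\bm{\alpha}},\mathcal{F}_{\bm{\beta}}\rangle = 0$ for $\bm{\alpha}\neq \bm{\beta}$.

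For the representation formula, with orthonormality in hand I would expand any $\mathcal{A}\in\mathcal{S}^{n,m}$ as $\mathcal{A} = \sum_{\bm{\alpha}}\langle \mathcal{A},\mathcal{F}_{\bm{\alpha}}\rangle\mathcal{F}_{\bm{\alpha}}$. Using Lemma~\ref{lem:tensor_inner_product} once more (now with $\mathcal{A}$ playing the role of the symmetric tensor), $\langle \mathcal{A},\mathcal{F}_{\bm{\alpha}}\rangle = \sqrt{m!/\bm{\alpha}!}\langle \mathcal{A},\mathscr{S}\tilde{\bm{e}}_{[\bm{\alpha}]}\rangle = \sqrt{m!/\bm{\alpha}!}\langle \mathcal{A},\tilde{\bm{e}}_{[\bm{\alpha}]}\rangle = \sqrt{m!/\bm{\alpha}!}\,\mathcal{A}_{[\bm{\alpha}]}$, which substitutes directly to give the stated formula. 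The only genuinely nontrivial point in the whole argument is the stabilizer count producing the $\bm{\alpha}!$ factor that cancels the normalization; everything else is bookkeeping with Lemma~\ref{lem:tensor_inner_product}.
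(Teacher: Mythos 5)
Your proof is correct, and the core of it---stripping one symmetrization via Lemma~\ref{lem:tensor_inner_product} and reducing orthonormality to the count $|\{\sigma\in\mathfrak{S}_m \mid \sigma\cdot[\bm{\alpha}]=[\bm{\alpha}]\}| = \bm{\alpha}!$---is exactly what the paper does, though the paper leaves the stabilizer (Young subgroup) interpretation implicit inside the sum $\sum_{\sigma}\prod_k\bm{e}_{i_k}^\top\bm{e}_{i_{\sigma(k)}}$, and argues orthogonality for $\bm{\alpha}\neq\bm{\beta}$ by comparing multiplicities of a value at a differing position rather than by your cleaner observation that distinct canonical representatives cannot lie in the same $\mathfrak{S}_m$-orbit.

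The one place you genuinely diverge is the expansion formula. The paper works directly with the coefficient array: it partitions the index set $\mathbb{N}_n^m$ into orbits $\mathbb{N}_n^m[\bm{\alpha}]$, uses the symmetry of $\mathcal{A}$ to pull out $\mathcal{A}_{[\bm{\alpha}]}$ on each orbit, and rewrites the orbit sum as $\frac{1}{\bm{\alpha}!}\sum_{\sigma}\pi_{\sigma}\tilde{\bm{e}}_{[\bm{\alpha}]}$ (the same stabilizer count again). You instead invoke the abstract orthonormal expansion $\mathcal{A}=\sum_{\bm{\alpha}}\langle\mathcal{A},\mathcal{F}_{\bm{\alpha}}\rangle\mathcal{F}_{\bm{\alpha}}$ and compute $\langle\mathcal{A},\mathcal{F}_{\bm{\alpha}}\rangle=\sqrt{m!/\bm{\alpha}!}\,\mathcal{A}_{[\bm{\alpha}]}$ by one more application of Lemma~\ref{lem:tensor_inner_product}. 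Your route is shorter and reuses the orthonormality just established, at the cost of depending on it; the paper's orbit decomposition is self-contained and makes the combinatorial bookkeeping visible, which is perhaps why the authors chose it. Both are valid; no gap.
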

\begin{proof}
Note that $\dim\mathcal{S}^{n,m} = |\mathbb{I}_{=m}^n|$.
Because the linear independence of $(\mathcal{F}_{\bm{\alpha}})_{\bm{\alpha}\in \mathbb{I}_{=m}^n}$ is clear, showing that it is orthonormal is sufficient.
Let $\bm{\alpha},\bm{\beta}\in \mathbb{I}_{=m}^n$.
We first consider the case of $\bm{\alpha} \neq \bm{\beta}$.
Let $(i_1,\dots,i_m) \coloneqq [\bm{\alpha}]$ and $(j_1,\dots,j_m)\coloneqq [\bm{\beta}]$.
Then, as $(i_1,\dots,i_m) \neq (j_1,\dots,j_m)$, there exists $k_0 \in \{1,\dots,m\}$ such that $i_{k_0} \neq j_{k_0}$, which implies that the number of values $i_{k_0}$ in the vector $(i_1,\dots,i_m)$ is not equal to that in the vector $(j_1,\dots,j_m)$.
Using Lemma~\ref{lem:tensor_inner_product}, we then have
\begin{align*}
\langle \mathcal{F}_{\bm{\alpha}},\mathcal{F}_{\bm{\beta}}\rangle &= \left\langle\sqrt{\frac{m!}{\bm{\alpha}!}}\mathscr{S}\tilde{\bm{e}}_{i_1\cdots i_m},\sqrt{\frac{m!}{\bm{\beta}!}}\mathscr{S}\tilde{\bm{e}}_{j_1\cdots j_m}\right\rangle\\
&= \sqrt{\frac{m!}{\bm{\alpha}!}}\sqrt{\frac{m!}{\bm{\beta}!}}\langle \mathscr{S}\tilde{\bm{e}}_{i_1\cdots i_m},\tilde{\bm{e}}_{j_1\cdots j_m}\rangle\\
&= \sqrt{\frac{m!}{\bm{\alpha}!}}\sqrt{\frac{m!}{\bm{\beta}!}}\left\langle \frac{1}{m!}\sum_{\sigma\in\mathfrak{S}_m}\tilde{\bm{e}}_{i_{\sigma(1)}\cdots i_{\sigma(m)}},\tilde{\bm{e}}_{j_1\cdots j_m}\right\rangle\\
&= \sqrt{\frac{m!}{\bm{\alpha}!}}\sqrt{\frac{m!}{\bm{\beta}!}}\frac{1}{m!}\sum_{\sigma\in\mathfrak{S}_m}\prod_{k=1}^m\bm{e}_{i_{\sigma(k)}}^\top\bm{e}_{j_k}\\
&= 0.
\end{align*}
Second, we consider the case of $\bm{\alpha} = \bm{\beta}$.
Let $(i_1,\dots,i_m) \coloneqq [\bm{\alpha}]$.
Then,
\begin{align*}
\|\mathcal{F}_{\bm{\alpha}}\|_{\rm F} &= \frac{m!}{\bm{\alpha}!}\langle\mathscr{S}\tilde{\bm{e}}_{i_1\cdots i_m},\tilde{\bm{e}}_{i_1\cdots i_m}\rangle\\
&= \frac{m!}{\bm{\alpha}!}\frac{1}{m!}\sum_{\sigma\in\mathfrak{S}_m}\prod_{k=1}^m\bm{e}_{i_k}^\top \bm{e}_{i_{\sigma(k)}}\\
&= 1.
\end{align*}
Therefore, $(\mathcal{F}_{\bm{\alpha}})_{\bm{\alpha}\in \mathbb{I}_{=m}^n}$ is an orthonormal basis for $\mathcal{S}^{n,m}$.

In addition, for $\bm{\alpha}\in\mathbb{I}^n_{=m}$, let
\begin{equation*}
\mathbb{N}_n^m[\bm{\alpha}] \coloneqq \{(i_1,\dots i_m)\in \mathbb{N}_n^m\mid \text{$(i_{\sigma(1)},\dots, i_{\sigma(m)}) = [\bm{\alpha}]$ for some $\sigma\in\mathfrak{S}_m$}\}.
\end{equation*}
From the symmetry of $\mathcal{A}$, it then follows that
\begin{align*}
\mathcal{A} &= \sum_{i_1,\dots,i_m=1}^n\mathcal{A}_{i_1\cdots i_m}\tilde{\bm{e}}_{i_1\cdots i_m}\\
&= \sum_{\bm{\alpha}\in \mathbb{I}_{=m}^n}\sum_{(i_1,\dots,i_m)\in \mathbb{N}_n^m[\bm{\alpha}]}\mathcal{A}_{i_1\cdots i_m}\tilde{\bm{e}}_{i_1\cdots i_m}\\
&= \sum_{\bm{\alpha}\in \mathbb{I}_{=m}^n}\mathcal{A}_{[\bm{\alpha}]}\left(\sum_{(i_1,\dots,i_m)\in \mathbb{N}_n^m[\bm{\alpha}]}\tilde{\bm{e}}_{i_1\cdots i_m}\right)\\
&= \sum_{\bm{\alpha}\in \mathbb{I}_{=m}^n}\mathcal{A}_{[\bm{\alpha}]}\left(\frac{1}{\bm{\alpha}!}\sum_{\sigma\in\mathfrak{S}_m}\pi_{\sigma}\tilde{\bm{e}}_{[\bm{\alpha}]}\right)\\
&= \sum_{\bm{\alpha}\in \mathbb{I}_{=m}^n}\sqrt{\frac{m !}{\bm{\alpha}!}}\mathcal{A}_{[\bm{\alpha}]}\left(\sqrt{\frac{m !}{\bm{\alpha}!}}\mathscr{S}\tilde{\bm{e}}_{[\bm{\alpha}]}\right)\\
&= \sum_{\bm{\alpha}\in \mathbb{I}_{=m}^n}\sqrt{\frac{m !}{\bm{\alpha}!}}\mathcal{A}_{[\bm{\alpha}]}\mathcal{F}_{\bm{\alpha}}.
\end{align*}
\end{proof}

For convenience, we write the coefficients of $\mathcal{A}\in\mathcal{S}^{n,m}$ with respect to the orthonormal basis $(\mathcal{F}_{\bm{\alpha}})_{\bm{\alpha}\in \mathbb{I}_{=m}^n}$ taken in Lemma~\ref{lem:orth_basis} as $(\mathcal{A}_{\bm{\alpha}}^{\mathcal{F}})_{\bm{\alpha}\in \mathbb{I}_{=m}^n}$.
That is, each $\mathcal{A}\in\mathcal{S}^{n,m}$ is written as
\begin{equation}
\mathcal{A} = \sum_{\bm{\alpha}\in \mathbb{I}_{=m}^n}\mathcal{A}_{\bm{\alpha}}^{\mathcal{F}}\mathcal{F}_{\bm{\alpha}}. \label{eq:A_orth_basis}
\end{equation}
Because $\mathcal{S}^{n,m}$ and $H^{n,m}$ are linearly isomorphic by the mapping~\eqref{eq:isom_Snm(V)_Hnm}, for each $\mathcal{A}\in\mathcal{S}^{n,m}$, there exists $\theta\in H^{n,m}$ such that $\langle \mathcal{A},\bm{x}^{\otimes m}\rangle = \theta(\bm{x})$.
The following lemma links the coefficients $(\mathcal{A}_{\bm{\alpha}}^{\mathcal{F}})_{\bm{\alpha}\in \mathbb{I}_{=m}^n}$ with the coefficients of $\theta$.

\begin{lemma}\label{lem:A_orth_coef}
Suppose that $\mathcal{A}\in\mathcal{S}^{n,m}$ and $(\theta_{\bm{\alpha}})_{\bm{\alpha}\in \mathbb{I}_{=m}^n} \in \mathbb{R}^{\mathbb{I}_{=m}^n}$ satisfy
\begin{equation}
\langle \mathcal{A},\bm{x}^{\otimes m}\rangle = \sum_{\bm{\alpha}\in \mathbb{I}_{=m}^n}\theta_{\bm{\alpha}} \bm{x}^{\bm{\alpha}}. \label{eq:Ax_thetax}
\end{equation}
Then, $\mathcal{A}_{\bm{\alpha}}^{\mathcal{F}} = \sqrt{\bm{\alpha}!/m!}\theta_{\bm{\alpha}}$ for all $\bm{\alpha}\in \mathbb{I}_{=m}^n$.
\end{lemma}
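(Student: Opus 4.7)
The plan is to combine the two expansions of $\mathcal{A}$ with a multinomial-style expansion of $\langle\mathcal{A},\bm{x}^{\otimes m}\rangle$ and then match coefficients.

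First, I would extract $\mathcal{A}_{\bm{\alpha}}^{\mathcal{F}}$ in terms of the ordinary component $\mathcal{A}_{[\bm{\alpha}]}$. By Lemma~\ref{lem:orth_basis}, $\mathcal{A}$ has the two representations
\begin{equation*}
\mathcal{A} = \sum_{\bm{\alpha}\in\mathbb{I}_{=m}^n}\sqrt{\frac{m!}{\bm{\alpha}!}}\mathcal{A}_{[\bm{\alpha}]}\mathcal{F}_{\bm{\alpha}} = \sum_{\bm{\alpha}\in\mathbb{I}_{=m}^n}\mathcal{A}_{\bm{\alpha}}^{\mathcal{F}}\mathcal{F}_{\bm{\alpha}},
\end{equation*}
so linear independence of $(\mathcal{F}_{\bm{\alpha}})_{\bm{\alpha}\in\mathbb{I}_{=m}^n}$ forces $\mathcal{A}_{\bm{\alpha}}^{\mathcal{F}} = \sqrt{m!/\bm{\alpha}!}\,\mathcal{A}_{[\bm{\alpha}]}$. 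Hence it suffices to show $\mathcal{A}_{[\bm{\alpha}]} = (\bm{\alpha}!/m!)\theta_{\bm{\alpha}}$.

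Next, I would compute the left-hand side of \eqref{eq:Ax_thetax} componentwise using the canonical basis: since $\bm{x}^{\otimes m} = \sum_{i_1,\dots,i_m=1}^n x_{i_1}\cdots x_{i_m}\tilde{\bm{e}}_{i_1\cdots i_m}$ and the basis $(\tilde{\bm{e}}_{i_1\cdots i_m})$ is orthonormal,
\begin{equation*}
\langle\mathcal{A},\bm{x}^{\otimes m}\rangle = \sum_{i_1,\dots,i_m=1}^n\mathcal{A}_{i_1\cdots i_m}x_{i_1}\cdots x_{i_m} = \sum_{\bm{\alpha}\in\mathbb{I}_{=m}^n}\Biggl(\sum_{(i_1,\dots,i_m)\in\mathbb{N}_n^m[\bm{\alpha}]}\mathcal{A}_{i_1\cdots i_m}\Biggr)\bm{x}^{\bm{\alpha}},
\end{equation*}
where I regroup indices by the $\mathfrak{S}_m$-orbit they lie in, using the set $\mathbb{N}_n^m[\bm{\alpha}]$ introduced in the proof of Lemma~\ref{lem:orth_basis}. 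By the symmetry of $\mathcal{A}$, every summand in the inner sum equals $\mathcal{A}_{[\bm{\alpha}]}$, and $|\mathbb{N}_n^m[\bm{\alpha}]| = m!/\bm{\alpha}!$ is the standard multinomial count. Thus the coefficient of $\bm{x}^{\bm{\alpha}}$ equals $(m!/\bm{\alpha}!)\mathcal{A}_{[\bm{\alpha}]}$.

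Finally, I would match this with the right-hand side of \eqref{eq:Ax_thetax}: since the monomials $(\bm{x}^{\bm{\alpha}})_{\bm{\alpha}\in\mathbb{I}_{=m}^n}$ are linearly independent, comparing coefficients gives $(m!/\bm{\alpha}!)\mathcal{A}_{[\bm{\alpha}]} = \theta_{\bm{\alpha}}$, i.e.\ $\mathcal{A}_{[\bm{\alpha}]} = (\bm{\alpha}!/m!)\theta_{\bm{\alpha}}$. Substituting into the identity from the first step yields
\begin{equation*}
\mathcal{A}_{\bm{\alpha}}^{\mathcal{F}} = \sqrt{\frac{m!}{\bm{\alpha}!}}\cdot\frac{\bm{\alpha}!}{m!}\theta_{\bm{\alpha}} = \sqrt{\frac{\bm{\alpha}!}{m!}}\theta_{\bm{\alpha}},
\end{equation*}
which is the claim. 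There is no genuine obstacle here; the one place to be careful is the combinatorial identity $|\mathbb{N}_n^m[\bm{\alpha}]| = m!/\bm{\alpha}!$, which I would justify by noting that $\mathbb{N}_n^m[\bm{\alpha}]$ is the $\mathfrak{S}_m$-orbit of $[\bm{\alpha}]$, whose stabilizer has order $\bm{\alpha}! = \prod_i \alpha_i!$ (independent permutations within each block of equal coordinates).
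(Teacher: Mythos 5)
Your proof is correct and follows essentially the same route as the paper's: both expand $\langle\mathcal{A},\bm{x}^{\otimes m}\rangle$ as a polynomial in $\bm{x}$ and compare coefficients of $\bm{x}^{\bm{\alpha}}$ against \eqref{eq:Ax_thetax}, starting from the representation in Lemma~\ref{lem:orth_basis}. The only difference is bookkeeping: the paper evaluates $\langle\mathscr{S}\tilde{\bm{e}}_{[\bm{\alpha}]},\bm{x}^{\otimes m}\rangle$ directly via Lemma~\ref{lem:tensor_inner_product}, whereas you pass through the canonical components $\mathcal{A}_{[\bm{\alpha}]}$ and the orbit count $|\mathbb{N}_n^m[\bm{\alpha}]| = m!/\bm{\alpha}!$, which is an equally valid way to land on the same coefficient identity.
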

\begin{proof}
Let $\mathcal{A}$ be in the form~\eqref{eq:A_orth_basis}.
It then follows from Lemma~\ref{lem:tensor_inner_product} that
\begin{align*}
\langle \mathcal{A},\bm{x}^{\otimes m}\rangle &= \sum_{\bm{\alpha}\in \mathbb{I}_{=m}^n}\left\langle \mathcal{A}_{\bm{\alpha}}^{\mathcal{F}}\sqrt{\frac{m!}{\bm{\alpha}!}}\mathscr{S}\tilde{\bm{e}}_{[\bm{\alpha}]},\bm{x}^{\otimes m}\right\rangle \\
&=  \sum_{\bm{\alpha}\in \mathbb{I}_{=m}^n}\mathcal{A}_{\bm{\alpha}}^{\mathcal{F}}\sqrt{\frac{m!}{\bm{\alpha}!}}\langle \underbrace{\bm{e}_1\otimes \cdots\otimes\bm{e}_1}_{\text{$\alpha_1$ factors}}\otimes \cdots\otimes \underbrace{\bm{e}_n\otimes \cdots\otimes \bm{e}_n}_{\text{$\alpha_n$ factors}},\bm{x}^{\otimes m}\rangle \\
&= \sum_{\bm{\alpha}\in \mathbb{I}_{=m}^n}\mathcal{A}_{\bm{\alpha}}^{\mathcal{F}}\sqrt{\frac{m!}{\bm{\alpha}!}}\bm{x}^{\bm{\alpha}}.
\end{align*}
As this equals $\sum_{\bm{\alpha}\in \mathbb{I}_{=m}^n}\theta_{\bm{\alpha}} \bm{x}^{\bm{\alpha}}$ and by comparing the coefficients, we obtain the desired result.
\end{proof}
Lemmas~\ref{lem:orth_basis} and \ref{lem:A_orth_coef} will be used to prove the technical result (Lemma~\ref{eq:mom_matrix}) in Sect.~\ref{subsec:dual}.

\subsection{Copositive and completely positive cones}\label{subsec:CPCOP}
Let $\mathbb{K}$ be a closed cone in an $n$-dimensional real inner product space $(\mathbb{V},(\cdot,\cdot))$.
Then, we define
\begin{align*}
\COP^{n,m}(\mathbb{K}) &\coloneqq \{\mathcal{A}\in\mathcal{S}^{n,m}(\mathbb{V}) \mid \langle\mathcal{A},x^{\otimes m}\rangle \ge 0 \text{ for all $x\in\mathbb{K}$}\},\\
\CP^{n,m}(\mathbb{K}) &\coloneqq \conv\{x^{\otimes m} \mid x\in \mathbb{K}\}
\end{align*}
and call them the COP and CP cones (over $\mathbb{K}$), respectively.
In the case in which $\mathbb{V} = \mathbb{R}^n$ and $\mathbb{K} = \mathbb{R}_+^n$, the COP and CP cones reduce to the COP tensor cone, written as $\COP^{n,m}$ and the CP tensor cone given by, for example,~\cite{Qi2013,QXX2014}.
In the case of $m = 2$ and $\mathbb{V} = \mathbb{R}^n$, under the identification between $\mathcal{S}^{n,2}$ and $\mathbb{S}^n$, we have
\begin{align*}
\COP(\mathbb{K}) &\coloneqq \COP^{n,2}(\mathbb{K}) = \{\bm{A}\in\mathbb{S}^n \mid \bm{x}^\top\bm{A}\bm{x}\ge 0 \text{ for all $\bm{x}\in\mathbb{K}$}\},\\
\CP(\mathbb{K}) &\coloneqq \CP^{n,2}(\mathbb{K}) =  \conv\{\bm{x}\bm{x}^\top \mid \bm{x}\in \mathbb{K}\},
\end{align*}
which are the COP and CP matrix cones~\cite{GS2013}, respectively.
Because we considered both the tensor and matrix cases, we generally omit the terms ``tensor'' and ``matrix.''

We now discuss the duality between $\COP^{n,m}(\mathbb{K})$ and $\CP^{n,m}(\mathbb{K})$.

\begin{proposition}\label{prop:CPCOP}
Let $\mathbb{K}$ be a closed cone in $\mathbb{V}$.
\begin{enumerate}[(i)]
\item $\COP^{n,m}(\mathbb{K}) = \CP^{n,m}(\mathbb{K})^*$. \label{enum:dual}
\item If $\mathbb{K}$ is also pointed and convex, then $\CP^{n,m}(\mathbb{K})$ is a closed convex cone.
\label{enum:closed}
\end{enumerate}
\end{proposition}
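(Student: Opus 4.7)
The plan is to dispatch (i) by a short duality argument and to reduce (ii) to the statement that the conical hull of a compact convex set avoiding the origin is closed; pointedness of $\mathbb{K}$ is what allows me to produce such a set.

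For (i), the forward inclusion $\COP^{n,m}(\mathbb{K}) \subseteq \CP^{n,m}(\mathbb{K})^{*}$ follows from bilinearity of the inner product: every $\mathcal{B}\in\CP^{n,m}(\mathbb{K})$ is a finite convex combination $\sum_{i}\mu_{i}x_{i}^{\otimes m}$ with $x_{i}\in\mathbb{K}$, so $\langle\mathcal{A},\mathcal{B}\rangle = \sum_{i}\mu_{i}\langle\mathcal{A},x_{i}^{\otimes m}\rangle\ge 0$ whenever $\mathcal{A}\in\COP^{n,m}(\mathbb{K})$. Conversely, since $x^{\otimes m}\in\CP^{n,m}(\mathbb{K})$ for every $x\in\mathbb{K}$, any element of $\CP^{n,m}(\mathbb{K})^{*}$ immediately satisfies the defining inequality of $\COP^{n,m}(\mathbb{K})$.

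For (ii), convexity is immediate from the definition, and the cone property comes from the identity $t\,x^{\otimes m} = (t^{1/m}x)^{\otimes m}$ for $t\ge 0$ and $x\in\mathbb{K}$, which shows that $\{x^{\otimes m}\mid x\in\mathbb{K}\}$ is closed under nonnegative scaling and hence so is its convex hull. The main work is closedness. Since $\mathbb{K}$ is pointed, closed, and convex, the previously stated theorem gives $e^{*}\in\setint\mathbb{K}^{*}$, and a small open ball around $e^{*}$ inside $\mathbb{K}^{*}$ yields $\epsilon>0$ with $(e^{*},x)\ge\epsilon\|x\|$ for all $x\in\mathbb{K}$. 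Consequently $B := \{x\in\mathbb{K}\mid (e^{*},x)=1\}$ is a compact convex base of $\mathbb{K}$, and each $x\in\mathbb{K}\setminus\{0\}$ factors as $x^{\otimes m} = (e^{*},x)^{m}b^{\otimes m}$ with $b = x/(e^{*},x)\in B$. This rewrites $\CP^{n,m}(\mathbb{K})$ as $\cone(K)$, where $K := \conv\{b^{\otimes m}\mid b\in B\}$ is compact (continuous image of the compact $B$ under $b\mapsto b^{\otimes m}$ followed by a convex hull in a finite-dimensional space) and convex, and avoids the origin because $\langle(e^{*})^{\otimes m},b^{\otimes m}\rangle = (e^{*},b)^{m} = 1$ on each generator.

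Closedness then reduces to the routine fact that the conical hull of a compact set not containing $0$ is closed: given a sequence $\lambda_{k}\mathcal{C}_{k}\to\mathcal{B}$ with $\lambda_{k}\ge 0$ and $\mathcal{C}_{k}\in K$, compactness of $K$ extracts a subsequence $\mathcal{C}_{k}\to\mathcal{C}\in K$, and the uniform lower bound on $\|\mathcal{C}_{k}\|_{\mathrm F}$ coming from $0\notin K$ forces $\lambda_{k}$ to be bounded, yielding $\mathcal{B}\in\cone(K)$ along a further subsequence. The main obstacle is really just the construction of the compact base $B$ from $e^{*}\in\setint\mathbb{K}^{*}$; the rest is clean finite-dimensional compactness.
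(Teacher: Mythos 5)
Your proof is correct, and while part (i) coincides with the paper's argument, your treatment of closedness in part (ii) takes a genuinely different route. The paper applies Carath\'{e}odory's theorem for cones directly in $\mathcal{S}^{n,m}(\mathbb{V})$ to write each term of a convergent sequence as $\mathcal{A}_k=\sum_{i=1}^{d}x_{ki}^{\otimes m}$ with $d=\dim\mathcal{S}^{n,m}(\mathbb{V})$, then pairs with $a^{\otimes m}$ for $a\in\setint(\mathbb{K}^*)$ to bound each $(x_{ki},a)$, and finally bounds each factor sequence $\{x_{ki}\}_k$ via the compact base $B=\mathbb{K}\cap\{y\mid(y,a)=1\}$ before extracting convergent subsequences. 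You instead lift the base first: you form $K=\conv\{b^{\otimes m}\mid b\in B\}$, observe it is compact (convex hull of a compact set in finite dimension), convex, and separated from the origin by the functional $\langle(e^*)^{\otimes m},\cdot\rangle\equiv 1$, identify $\CP^{n,m}(\mathbb{K})=\cone(K)$, and invoke the standard lemma that the conical hull of a compact convex set avoiding the origin is closed. The two proofs rest on the same two ingredients --- nonemptiness of $\setint(\mathbb{K}^*)$ from pointedness, and compactness of the base $B$ --- and both use Carath\'{e}odory somewhere (yours is hidden in the compactness of $\conv$ of a compact set), but yours is more modular: it isolates a reusable general fact and avoids tracking $d$ separate factor sequences, at the cost of having to verify the identity $\CP^{n,m}(\mathbb{K})=\cone(K)$ and that $0\notin K$ (which you do, via the normalizing functional). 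The paper's version is more hands-on but makes the rank-one decomposition of limit points explicit, which it reuses in spirit elsewhere. One cosmetic point: your cone-property identity $t\,x^{\otimes m}=(t^{1/m}x)^{\otimes m}$ needs $t>0$ so that $t^{1/m}x\in\mathbb{K}$; the case $t=0$ is covered since $0\in\mathbb{K}$ by closedness, so nothing is lost.
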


\begin{proof}
We first prove \eqref{enum:dual}.
Let $\mathcal{A} \in \COP^{n,m}(\mathbb{K})$.
Then, for any $x_i\in \mathbb{K}$ and $\lambda_i\ge 0$ such that $\sum_{i}\lambda_i = 1$, we have
\begin{equation}
\left\langle \mathcal{A},\sum_{i}\lambda_ix_i^{\otimes m}\right\rangle = \sum_i\lambda_i\langle \mathcal{A},x_i^{\otimes m}\rangle. \label{eq:dual}
\end{equation}
As $\langle \mathcal{A},x_i^{\otimes m}\rangle$ and $\lambda_i$ are nonnegative for all $i$, \eqref{eq:dual} is also nonnegative, which implies that $\mathcal{A} \in \CP^{n,m}(\mathbb{K})^*$.
Conversely, let $\mathcal{A} \in \CP^{n,m}(\mathbb{K})^*$.
For any $x\in\mathbb{K}$, because $x^{\otimes m} \in \CP^{n,m}(\mathbb{K})$, $\langle \mathcal{A},x^{\otimes m}\rangle \ge 0$ follows from the definition of dual cones.
Therefore, we obtain $\mathcal{A}\in\COP^{n,m}(\mathbb{K})$.

We now prove \eqref{enum:closed}.
The convexity of $\CP^{n,m}(\mathbb{K})$ follows from its definition.
In addition, $\CP^{n,m}(\mathbb{K})$ is a cone because $\mathbb{K}$ is a cone.
To prove the closedness, let $\{\mathcal{A}_k\}_k \subseteq \CP^{n,m}(\mathbb{K})$ and suppose it converges to some $\mathcal{A}_{\infty}\in\mathcal{S}^{n,m}(\mathbb{V})$.
Note that $\CP^{n,m}(\mathbb{K})$ can be represented as $\CP^{n,m}(\mathbb{K}) = \cone\{x^{\otimes m}\mid x\in\mathbb{K}\}$
as it is a convex cone containing zero (the origin).
Therefore, by Carath\'{e}odory's theorem for cones~\cite[Exercise~B.1.7]{Bertsekas1999}, every element of $\CP^{n,m}(\mathbb{K})$ can be written as the sum of at most $d \coloneqq \dim\mathcal{S}^{n,m}(\mathbb{V})$ elements in the form of $x^{\otimes m}$ with $x\in\mathbb{K}$; for every $k$, there exist $x_{ki}$ with $x_{ki}\in\mathbb{K}$ $(i = 1,\dots,d)$ such that $\mathcal{A}_k = \sum_{i=1}^dx_{ki}^{\otimes m}$.
As $\setint(\mathbb{K}^*)$ is nonempty under the assumption on $\mathbb{K}$, we take $a\in\setint(\mathbb{K}^*)$ arbitrarily.
Then, we obtain
\begin{equation*}
\langle \mathcal{A}_k,a^{\otimes m}\rangle = \sum_{i=1}^d\langle x_{ki}^{\otimes m},a^{\otimes m}\rangle = \sum_{i=1}^d(x_{ki},a)^m \to \langle \mathcal{A}_{\infty},a^{\otimes m}\rangle\ (k\to \infty).
\end{equation*}
Given that $x_{ki}\in\mathbb{K}$, $(x_{ki},a) \ge 0$ for any $k$ and $i$.
Therefore, $\{(x_{ki},a)\}_k$ is bounded for each $i$.

Then, $\{x_{ki}\}_k$ is bounded for each $i$.
To observe this, let $\{k(l) \mid l\in\mathbb{N}\}$ denote the set of indices $k$ such that $x_{ki} \neq 0$.
Showing the boundedness of $\{x_{k(l)i}\}_l$ is sufficient.
Let $B \coloneqq \mathbb{K} \cap \{y\in \mathbb{V}\mid (y,a) = 1\}$.
Note that $B$ is compact.
Then, for each $l$, there exist $\alpha_{li}> 0$ and $y_{li}\in B$ such that $x_{k(l)i} = \alpha_{li}y_{li}$.
Given that $(x_{k(l)i},a) = \alpha_{li}$, the sequence $\{\alpha_{li}\}_l$ is bounded.
Combining it with the boundedness of $\{y_{li}\}_l \subseteq B$ leads to the boundedness of
$\{x_{k(l)i}\}_l = \{\alpha_{li}y_{li}\}_l$.

Thus, by taking a subsequence, if necessary, we assume that $\{x_{ki}\}_k$ converges to some $x_{\infty i}$ for each $i$.
The closedness of $\mathbb{K}$ implies that $x_{\infty i}\in\mathbb{K}$.
Therefore,
\begin{equation*}
\mathcal{A}_{\infty} = \lim_{k\to\infty}\mathcal{A}_{k} = \lim_{k\to\infty}\sum_{i=1}^dx_{ki}^{\otimes m} = \sum_{i=1}^dx_{\infty i}^{\otimes m} \in \mathcal{CP}^{n,m}(\mathbb{K}),
\end{equation*}
which means that $\CP^{n,m}(\mathbb{K})$ is closed.
\end{proof}

\begin{corollary}\label{cor:CPCOP_dual}
Let $\mathbb{K}$ be a pointed closed convex cone.
Then, $\COP^{n,m}(\mathbb{K})$ and $\CP^{n,m}(\mathbb{K})$ are dual to each other.
\end{corollary}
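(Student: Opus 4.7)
The plan is that this corollary is a short consequence of Proposition~\ref{prop:CPCOP} combined with the bidual theorem stated at the start of the preliminaries. Recall that ``$\COP^{n,m}(\mathbb{K})$ and $\CP^{n,m}(\mathbb{K})$ are dual to each other'' means both $\COP^{n,m}(\mathbb{K}) = \CP^{n,m}(\mathbb{K})^*$ and $\CP^{n,m}(\mathbb{K}) = \COP^{n,m}(\mathbb{K})^*$, so it suffices to establish the second identity on top of part~\eqref{enum:dual} of Proposition~\ref{prop:CPCOP}.

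First, I would cite Proposition~\ref{prop:CPCOP}\eqref{enum:dual} to get $\COP^{n,m}(\mathbb{K}) = \CP^{n,m}(\mathbb{K})^*$, which is already half of the desired duality and holds for any closed cone $\mathbb{K}$. Next, I would invoke Proposition~\ref{prop:CPCOP}\eqref{enum:closed}, whose hypotheses (pointed, closed, convex) are exactly what the corollary assumes on $\mathbb{K}$, to conclude that $\CP^{n,m}(\mathbb{K})$ is a closed convex cone.

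Then I would take duals of both sides of $\COP^{n,m}(\mathbb{K}) = \CP^{n,m}(\mathbb{K})^*$ and apply the bidual theorem (part~(ii) of the theorem cited from \cite{BV2004}) to $\CP^{n,m}(\mathbb{K})$: since $\CP^{n,m}(\mathbb{K})$ is closed and convex, $(\CP^{n,m}(\mathbb{K})^*)^* = \CP^{n,m}(\mathbb{K})$. Therefore
\begin{equation*}
\COP^{n,m}(\mathbb{K})^* = (\CP^{n,m}(\mathbb{K})^*)^* = \CP^{n,m}(\mathbb{K}),
\end{equation*}
which together with part~\eqref{enum:dual} of Proposition~\ref{prop:CPCOP} yields the claim.

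There is essentially no obstacle here: all the technical work (the nontrivial closedness argument using Carath\'{e}odory's theorem and the existence of a point in $\setint(\mathbb{K}^*)$) has already been absorbed into Proposition~\ref{prop:CPCOP}\eqref{enum:closed}, and the only thing the corollary adds is the bookkeeping step of taking duals and invoking the standard bidual identity for closed convex cones. The only subtlety worth double-checking is that the ambient inner product space $\mathcal{S}^{n,m}(\mathbb{V})$ is the right one in which the dual cones are taken, but this is automatic from the definitions in Section~\ref{subsec:tensor}.
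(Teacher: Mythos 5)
Your proposal is correct and follows exactly the paper's own argument: cite Proposition~\ref{prop:CPCOP}\eqref{enum:dual} for $\COP^{n,m}(\mathbb{K}) = \CP^{n,m}(\mathbb{K})^*$, use Proposition~\ref{prop:CPCOP}\eqref{enum:closed} to get closedness and convexity of $\CP^{n,m}(\mathbb{K})$, and then take duals via the bidual identity. The only difference is that you spell out the invocation of the bidual theorem explicitly, which the paper leaves implicit in the phrase ``taking the dual.''
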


\begin{proof}
It follows from \eqref{enum:closed} in Proposition~\ref{prop:CPCOP} that $\CP^{n,m}(\mathbb{K})$ is a closed convex cone.
Taking the dual of \eqref{enum:dual} in Proposition~\ref{prop:CPCOP} and using Theorem~\ref{thm:cone}, we obtain $\COP^{n,m}(\mathbb{K})^* = \CP^{n,m}(\mathbb{K})$.
\end{proof}

In this study, we focused only on the case in which $\mathbb{K}$ is a symmetric cone, which is a pointed closed convex cone.
In this case, Corollary~\ref{cor:CPCOP_dual} is applicable to $\COP^{n,m}(\mathbb{K})$ and $\CP^{n,m}(\mathbb{K})$.

\subsection{Homogeneous polynomial function on inner product space}
Let $(\mathbb{V},(\cdot,\cdot))$ be an $n$-dimensional real inner product space.
A homogeneous polynomial function of degree $m$ on $\mathbb{V}$ is the mapping $\mathbb{V}\ni x \mapsto \langle\mathcal{A},x^{\otimes m}\rangle $ for some $\mathcal{A}\in\mathcal{S}^{n,m}(\mathbb{V})$.
$H^{n,m}(\mathbb{V})$ denotes the set of homogeneous polynomial functions of degree $m$ on $\mathbb{V}$, i.e., $H^{n,m}(\mathbb{V}) = \{\langle \mathcal{A},x^{\otimes m}\rangle \mid \mathcal{A}\in\mathcal{S}^{n,m}(\mathbb{V})\}$.
As $H^{n,m} = \{\langle \mathcal{A},\bm{x}^{\otimes m}\rangle \mid \mathcal{A}\in\mathcal{S}^{n,m}\}$, $H^{n,m}(\mathbb{R}^n)$ agrees with $H^{n,m}$.

As the definition of $\Sigma^{n,2m}$, $\Sigma^{n,2m}(\mathbb{V})$ denotes the set of sums of squares of homogeneous polynomial functions of degree $m$ on $\mathbb{V}$.
To represent the set $\Sigma^{n,2m}(\mathbb{V})$ more explicitly, we prove the following lemma.

\begin{lemma}\label{lem:Ax^2}
$\langle \mathcal{A},x^{\otimes m}\rangle ^2 = \langle \mathscr{S}(\mathcal{A}\otimes \mathcal{A}),x^{\otimes 2m}\rangle$ for any $\mathcal{A}\in\mathcal{S}^{n,m}(\mathbb{V})$.
\end{lemma}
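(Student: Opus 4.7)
The plan is to reduce the identity to the basic multiplicativity of the induced inner product on tensor products, followed by a direct appeal to Lemma~\ref{lem:tensor_inner_product}.

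First I would use the defining property of the inner product $\langle\cdot,\cdot\rangle$ on $\mathbb{V}^{\otimes 2m}$. Since this inner product satisfies $\langle a_1\otimes\cdots\otimes a_{2m}, b_1\otimes\cdots\otimes b_{2m}\rangle = \prod_{i=1}^{2m}(a_i,b_i)$ on simple tensors, bilinearity extends it to $\langle \mathcal{A}\otimes\mathcal{B}, \mathcal{C}\otimes\mathcal{D}\rangle = \langle\mathcal{A},\mathcal{C}\rangle\langle\mathcal{B},\mathcal{D}\rangle$ for $\mathcal{A},\mathcal{C}\in\mathbb{V}^{\otimes m}$ and $\mathcal{B},\mathcal{D}\in\mathbb{V}^{\otimes m}$. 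Applying this with $\mathcal{A}=\mathcal{B}=\mathcal{A}$ and $\mathcal{C}=\mathcal{D}=x^{\otimes m}$, and using the obvious identity $x^{\otimes m}\otimes x^{\otimes m} = x^{\otimes 2m}$, one immediately obtains
\begin{equation*}
\langle \mathcal{A},x^{\otimes m}\rangle^2 = \langle \mathcal{A},x^{\otimes m}\rangle\langle \mathcal{A},x^{\otimes m}\rangle = \langle \mathcal{A}\otimes\mathcal{A},\,x^{\otimes m}\otimes x^{\otimes m}\rangle = \langle \mathcal{A}\otimes\mathcal{A},\,x^{\otimes 2m}\rangle.
\end{equation*}

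Second, I would observe that $x^{\otimes 2m}$ lies in the symmetric tensor space $\mathcal{S}^{n,2m}(\mathbb{V})$, since $\pi_\sigma(x^{\otimes 2m}) = x^{\otimes 2m}$ for every $\sigma\in\mathfrak{S}_{2m}$. Therefore Lemma~\ref{lem:tensor_inner_product}, applied with $\mathcal{A}\otimes\mathcal{A}\in\mathbb{V}^{\otimes 2m}$ in the role of the general tensor and $x^{\otimes 2m}$ in the role of the symmetric tensor, yields
\begin{equation*}
\langle \mathcal{A}\otimes\mathcal{A},\,x^{\otimes 2m}\rangle = \langle \mathscr{S}(\mathcal{A}\otimes\mathcal{A}),\,x^{\otimes 2m}\rangle,
\end{equation*}
which combined with the previous display gives the claim.

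There is no real obstacle here: the only subtlety is recognizing that one should \emph{not} try to directly symmetrize $\mathcal{A}\otimes\mathcal{A}$ by expanding in coordinates, but instead exploit that the test tensor $x^{\otimes 2m}$ is already fully symmetric, so that pairing with any representative of the $\mathfrak{S}_{2m}$-orbit of $\mathcal{A}\otimes\mathcal{A}$ (in particular its symmetrization) yields the same value. Everything else is just bilinearity of the tensor inner product, which is why the argument reduces to two short lines.
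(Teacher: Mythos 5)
Your proof is correct and follows essentially the same two-step route as the paper: first establish $\langle \mathcal{A},x^{\otimes m}\rangle^2 = \langle \mathcal{A}\otimes\mathcal{A},x^{\otimes 2m}\rangle$, then invoke Lemma~\ref{lem:tensor_inner_product} together with the symmetry of $x^{\otimes 2m}$ to insert $\mathscr{S}$. The only (harmless) difference is that the paper verifies the first identity by expanding $\mathcal{A}$ in an orthonormal basis and comparing coefficients, whereas you obtain it coordinate-free from the multiplicativity of the induced inner product on simple tensors, which is a slightly cleaner way to say the same thing.
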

\begin{proof}
Fix an orthonormal basis $v_1,\dots,v_n$ in $\mathbb{V}$ arbitrarily.
In addition, using the basis, we write $\mathcal{A}\in\mathcal{S}^{n,m}(\mathbb{V})$ in the form~\eqref{eq:tensor_representation}.
Given that
\begin{equation*}
\langle \mathcal{A},x^{\otimes m}\rangle = \sum_{i_1,\dots,i_m=1}^n\mathcal{A}_{i_1\cdots i_m}\prod_{k=1}^m(x,v_{i_k}),
\end{equation*}
we have
\begin{equation}
\langle \mathcal{A},x^{\otimes m}\rangle^2 = \sum_{i_1,\dots,i_{2m}=1}^n\mathcal{A}_{i_1\cdots i_m}\mathcal{A}_{i_{m+1}\cdots i_{2m}}\prod_{k=1}^{2m}(x,v_{i_k}). \label{eq:Ax^2}
\end{equation}
Moreover, as $\mathcal{A}\otimes\mathcal{A} = \sum_{i_1,\dots,i_{2m}=1}^n\mathcal{A}_{i_1\cdots i_m}\mathcal{A}_{i_{m+1}\cdots i_{2m}}\tilde{v}_{i_1\cdots i_{2m}}$,
$\langle \mathcal{A}\otimes \mathcal{A},x^{\otimes 2m}\rangle$ agrees with \eqref{eq:Ax^2}.
Therefore, by applying Lemma~\ref{lem:tensor_inner_product}, we obtain the desired result.
\end{proof}
Using Lemma~\ref{lem:Ax^2}, we can express $\Sigma^{n,2m}(\mathbb{V})$ as $\conv\{\langle \mathscr{S}(\mathcal{A}\otimes \mathcal{A}),x^{\otimes 2m}\rangle \mid \mathcal{A}\in\mathcal{S}^{n,m}(\mathbb{V})\}$.
Through the isomorphism $H^{n,2m}(\mathbb{V}) \ni \langle \mathcal{A},x^{\otimes 2m}\rangle \mapsto \mathcal{A}\in\mathcal{S}^{n,2m}(\mathbb{V})$, the set $\Sigma^{n,2m}(\mathbb{V})$ is mapped onto the set $\conv\{\mathscr{S}(\mathcal{A}\otimes \mathcal{A})\mid \mathcal{A}\in\mathcal{S}^{n,2m}(\mathbb{V})\}$ denoted by $\SOS^{n,2m}(\mathbb{V})$.
We define $\MOM^{n,2m}(\mathbb{V}) \coloneqq \SOS^{n,2m}(\mathbb{V})^*$ and call it the moment cone.
Because $\SOS^{n,2m}(\mathbb{V})$ is a closed convex cone~\cite[Lemma~2.2]{CLQ2016}, $\SOS^{n,2m}(\mathbb{V})$ and $\MOM^{n,2m}(\mathbb{V})$ are dual to each other.

\section{Sum-of-squares-based inner-approximation hierarchy}\label{sec:inner_approx_COP_SOS}
Let $(\mathbb{E},\circ,\bullet)$ be a Euclidean Jordan algebra of dimension $n$.
In this section, we aim to provide an inner-approximation hierarchy described by an SOS constraint for the COP cone $\COP^{n,m}(\mathbb{E}_+)$.
In Sect.~\ref{subsec:previous_work}, we first provide an inner-approximation hierarchy for the cone of homogeneous polynomials that are nonnegative over a symmetric cone in $\mathbb{R}^n$.
Using the results in Sect.~\ref{subsec:previous_work}, we provide the desired approximation hierarchy in Sect.~\ref{subsec:case_tensor}.
In Sect.~\ref{subsec:dual}, we discuss its dual.
Subsequently, we fix an orthonormal basis $v_1,\dots,v_n$ for the given Euclidean Jordan algebra $(\mathbb{E},\circ,\bullet)$ and let $\phi\colon\mathbb{E}\to\mathbb{R}^n$ be the associated isometry.

\subsection{Approximation hierarchy for homogeneous polynomials}\label{subsec:previous_work}
If we define $\bm{x}\lozenge\bm{y} \coloneqq \phi(\phi^{-1}(\bm{x})\circ \phi^{-1}(\bm{y}))$ and $\bm{x}\blacklozenge\bm{y} \coloneqq \bm{x}^\top\bm{y}$ for $\bm{x},\bm{y}\in\mathbb{R}^n$, then $(\mathbb{R}^n,\lozenge,\blacklozenge)$ is also a Euclidean Jordan algebra.
Hereafter, to emphasize that $\mathbb{R}^n$ is a Euclidean Jordan algebra and $(\mathbb{R}^n,\lozenge,\blacklozenge)$ depends on the choice of $\phi$, we write the Euclidean Jordan algebra $\mathbb{R}^n$ as $\phi(\mathbb{E})$.
We note that the symmetric cone $\phi(\mathbb{E})_+$ associated with the Euclidean Jordan algebra $(\phi(\mathbb{E}),\lozenge,\blacklozenge)$ satisfies
\begin{equation}
\phi(\mathbb{E})_+ = \{\bm{x}\lozenge\bm{x}\mid \bm{x}\in\phi(\mathbb{E})\} = \{\phi(x)\lozenge \phi(x)\mid x\in\mathbb{E}\} = \phi(\mathbb{E}_+). \label{eq:symmetric_cone}
\end{equation}
We illustrate an example of \eqref{eq:symmetric_cone}, the isometry $\phi$, and the product $\lozenge$ using the Euclidian Jordan algebra introduced in Example~\ref{ex:sdc}.

\begin{example}
Let $(\mathbb{S}^n,\circ,\bullet)$ be the Euclidian Jordan algebra introduced in Example~\ref{ex:sdc}.
We define the linear mapping $\svec\colon \mathbb{S}^n\to\mathbb{R}^{\frac{n(n+1)}{2}}$ as
\begin{equation*}
\svec(\bm{X}) \coloneqq (X_{11},\sqrt{2}X_{12},X_{22},\dots,\sqrt{2}X_{1n},\dots,\sqrt{2}X_{n-1,n},X_{nn})
\end{equation*}
for each $\bm{X}\in\mathbb{S}^n$.
Then, the mapping $\svec(\cdot)$ is an isometry between $\mathbb{S}^n$ and $\mathbb{R}^{\frac{n(n+1)}{2}}$; i.e., $\bm{X}\bullet \bm{Y} = \svec(\bm{X})^\top\svec(\bm{Y})$ holds for all $\bm{X},\bm{Y}\in\mathbb{S}^n$.
Let $\smat(\cdot)$ denote the inverse mapping of $\svec(\cdot)$.
If we define $\bm{x}\lozenge \bm{y} \coloneqq \svec(\smat(\bm{x})\circ \smat(\bm{y}))$, and $\bm{x}\blacklozenge \bm{y} \coloneqq \bm{x}^\top\bm{y}$ for $\bm{x},\bm{y}\in\mathbb{R}^{\frac{n(n+1)}{2}}$, then $(\mathbb{R}^{\frac{n(n+1)}{2}},\lozenge,\blacklozenge)$ is also a Euclidean Jordan algebra.
The symmetric cone associated with the Euclidean Jordan algebra $(\mathbb{R}^{\frac{n(n+1)}{2}},\lozenge,\blacklozenge)$ is $\svec(\mathbb{S}_+^n)$.
\end{example}

Here, we derive an inner-approximation hierarchy for the cone of homogeneous polynomials that are nonnegative over the symmetric cone $\phi(\mathbb{E}_+)$.
Let
\begin{equation*}
\widetilde{\COP}^{n,m}(\phi(\mathbb{E_+})) \coloneqq \{\theta\in H^{n,m}\mid \theta(\bm{x}) \ge 0 \text{ for all $\bm{x}\in\phi(\mathbb{E}_+)$}\}
\end{equation*}
be the cone of homogeneous polynomials in $n$ variables of degree $m$ that are nonnegative over the symmetric cone $\phi(\mathbb{E}_+)$.
For each $r\in \mathbb{N}$, we define
\begin{equation*}
\widetilde{\mathcal{K}}_{\mathrm{NN},r}^{n,m}(\phi(\mathbb{E}_+)) \coloneqq \{\theta\in H^{n,m} \mid (\bm{x}^\top\bm{x})^r\theta(\bm{x}\lozenge\bm{x}) \in \Sigma^{n,2(r+m)}\}.
\end{equation*}

\begin{proposition}\label{prop:COP_approx_poly}
Each $\widetilde{\mathcal{K}}_{\mathrm{NN},r}^{n,m}(\phi(\mathbb{E}_+))$ is a closed convex cone, and the sequence $\{\widetilde{\mathcal{K}}_{\mathrm{NN},r}^{n,m}(\phi(\mathbb{E}_+))\}_r$ satisfies the following two conditions:
\begin{enumerate}[(i)]
\item $\widetilde{\mathcal{K}}_{\mathrm{NN},r}^{n,m}(\phi(\mathbb{E}_+)) \subseteq \widetilde{\mathcal{K}}_{\mathrm{NN},r+1}^{n,m}(\phi(\mathbb{E}_+)) \subseteq \widetilde{\COP}^{n,m}(\phi(\mathbb{E_+}))$ for all $r\in \mathbb{N}$. \label{enum:increasing}
\item $\setint\widetilde{\COP}^{n,m}(\phi(\mathbb{E_+})) \subseteq \bigcup_{r=0}^{\infty}\widetilde{\mathcal{K}}_{\mathrm{NN},r}^{n,m}(\phi(\mathbb{E}_+))$. \label{enum:convergence}
\end{enumerate}
\end{proposition}
In the following, the notation ``$\widetilde{\mathcal{K}}_{\mathrm{NN},r}^{n,m}(\phi(\mathbb{E}_+)) \uparrow \widetilde{\COP}^{n,m}(\phi(\mathbb{E_+}))$'' is used to represent the two conditions mentioned in Proposition~\ref{prop:COP_approx_poly}.
The notation is not limited to the sequence $\{\widetilde{\mathcal{K}}_{\mathrm{NN},r}^{n,m}(\phi(\mathbb{E}_+))\}_r$.
To prove Proposition~\ref{prop:COP_approx_poly}, we exploit Reznick's Positivstellensatz, which is described as follows:
\begin{theorem}[{\cite[Theorem~3.12]{Reznick1995}}]\label{theorem:Reznick}
Let $\theta\in H^{n,2m}$.
If $\theta(\bm{x}) > 0$ for all $\bm{x}\in\mathbb{R}^n\setminus\{\bm{0}\}$, then there exists $r_0\in \mathbb{N}$ such that $(\bm{x}^\top\bm{x})^{r_0}\theta(\bm{x}) \in \Sigma^{n,2(r_0+m)}$.
\end{theorem}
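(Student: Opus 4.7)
The plan is to prove Reznick's Positivstellensatz by passing to the sphere, introducing a positive cubature (``Hilbert identity'') representation of $\|\bm x\|^{2N}$, and then using this representation together with the quantitative positivity of $\theta$ on the sphere to construct the SOS certificate after multiplication by a sufficiently large power of $\|\bm x\|^2$.

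First, I would reduce to a quantitative form of the hypothesis. Since $\theta\in H^{n,2m}$ is continuous and strictly positive on the compact set $S^{n-1}$, the minimum $\delta:=\min_{\bm x\in S^{n-1}}\theta(\bm x)$ is strictly positive. Homogeneity of degree $2m$ then upgrades this to the pointwise bound $\theta(\bm x)\ge \delta\,(\bm x^\top\bm x)^m$ on all of $\mathbb{R}^n$. Simultaneously, $\theta$ is bounded above on the sphere by some $M>0$. The goal is thereby reduced to exhibiting an integer $r_0$ for which $(\bm x^\top\bm x)^{r_0}\theta(\bm x)$ admits a sum-of-squares decomposition.

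Next, I would invoke the ``Hilbert identity'' together with a positive cubature on the sphere. Classically, for each $N\in\mathbb{N}$ one has $(\bm x^\top\bm x)^N=\lambda_{N,n}\int_{S^{n-1}}(\bm u^\top\bm x)^{2N}\,d\sigma(\bm u)$ with $\lambda_{N,n}>0$ and $\sigma$ the uniform probability measure on $S^{n-1}$. By Tchakaloff's theorem (applied to the positive linear functional $p\mapsto \int_{S^{n-1}}p\,d\sigma$ on the finite-dimensional space of forms of degree $2N$), this integral admits an exact positive cubature: there exist $\bm a_1,\dots,\bm a_k\in S^{n-1}$ and weights $c_1,\dots,c_k>0$ such that $(\bm x^\top\bm x)^N=\sum_{i=1}^k c_i(\bm a_i^\top\bm x)^{2N}$. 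This identity exhibits $(\bm x^\top\bm x)^N$ as a concrete sum of $2N$-th powers of real linear forms, a ``strongly SOS'' representation.

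The decisive third step combines the two ingredients. Taking $N=r+m$ in the cubature identity and expanding $(\bm x^\top\bm x)^r\theta(\bm x)$ as a product, the plan is to replace one factor of $(\bm x^\top\bm x)^{r+m}$ by its cubature while compensating with the defect form $\eta(\bm x):=\theta(\bm x)-\delta(\bm x^\top\bm x)^m\ge 0$. Concretely, one writes $(\bm x^\top\bm x)^r\theta(\bm x)=\delta(\bm x^\top\bm x)^{r+m}+(\bm x^\top\bm x)^r\eta(\bm x)$, of which the first summand is SOS ($(\bm x^\top\bm x)^{r+m}$ is an $(r+m)$-th power of the SOS form $\bm x^\top\bm x$). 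For the second summand, one expands $(\bm x^\top\bm x)^r$ via a cubature of sufficiently high order and uses the uniform bound $0\le\eta(\bm x)\le (M-\delta)(\bm x^\top\bm x)^m$ to absorb the cross-terms into squares; choosing $r$ large enough relative to the ratio $M/\delta$ and the degree of a Waring-style decomposition of $\eta$ makes the error term lie in the interior of $\Sigma^{n,2(r+m)}$, so that the whole form is SOS. The main obstacle I anticipate is this final absorption step: making the approximation quantitative so that the chosen $r$ indeed suffices, rather than merely pushing the decomposition into the limit.

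If the explicit construction above proves delicate, a cleaner alternative avoids quadrature altogether and proceeds by duality. Define the nested closed convex cones $C_r:=\{q\in H^{n,2m}\mid(\bm x^\top\bm x)^r q(\bm x)\in\Sigma^{n,2(r+m)}\}$. By a Haviland-type theorem, the intersection $\bigcap_r C_r^*$ of their dual cones is exactly the cone of linear functionals on $H^{n,2m}$ given by integration against a finite Borel measure on $\mathbb{R}^n$. Any PD form $\theta$ pairs strictly positively with every nonzero such functional (because $\theta(\bm x)>0$ off the origin), so by bipolarity $\theta$ lies in the closure of $\bigcup_r C_r$; the fact that $\theta$ lies in the interior of the PSD cone, combined with the openness properties of $\bigcup_r C_r$ inherited from $\Sigma^{n,2(r+m)}$, then places $\theta$ in $C_{r_0}$ for some finite $r_0$, yielding Reznick's conclusion.
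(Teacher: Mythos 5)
The paper does not actually prove this statement: it is quoted directly as \cite[Theorem~3.12]{Reznick1995} and used as a black box, so your argument has to stand on its own, and its central step does not. After you write $(\bm{x}^\top\bm{x})^{r}\theta(\bm{x}) = \delta(\bm{x}^\top\bm{x})^{r+m} + (\bm{x}^\top\bm{x})^{r}\eta(\bm{x})$ with $\eta \coloneqq \theta - \delta(\bm{x}^\top\bm{x})^m \ge 0$, the decomposition is circular: the two summands add back up to exactly $(\bm{x}^\top\bm{x})^{r}\theta(\bm{x})$, so ``absorbing'' the second term into the first \emph{is} the theorem, and neither summand can be handled separately. The form $\eta$ is only positive semidefinite and vanishes at the minimizers of $\theta$ on the sphere, and a PSD form with zeros need not become SOS after multiplication by \emph{any} power of $\bm{x}^\top\bm{x}$ (Reznick himself later showed that uniform denominators fail for some PSD forms with zeros), so $(\bm{x}^\top\bm{x})^{r}\eta$ has no standalone certificate. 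Everything therefore rests on your ``absorption'' into the interior point $\delta(\bm{x}^\top\bm{x})^{r+m}$, for which you give no mechanism: a pointwise bound such as $\eta \le (M-\delta)(\bm{x}^\top\bm{x})^m$ on the sphere does not control the coefficient-norm distance that governs membership in $\Sigma^{n,2(r+m)}$ (pointwise-small forms can lie far outside the SOS cone, which is a strictly smaller, and relatively thinner, subcone of the PSD cone as the degree $2(r+m)$ grows), and nothing in the argument explains why increasing $r$ helps rather than hurts. The missing idea is precisely Reznick's: via apolarity one studies the operator $q \mapsto \int_{S^{n-1}} q(\bm{u})\,(\bm{u}^\top\bm{x})^{2(r+m)}\,d\sigma(\bm{u})$, shows that its suitably normalized inverse applied to $(\bm{x}^\top\bm{x})^{r}\theta$ converges uniformly on the sphere to a positive multiple of $\theta$ as $r\to\infty$ (with $O(1/r)$ error controlled through a Laplacian expansion), so that for large $r$ the preimage is a strictly positive density; your Tchakaloff/Hilbert cubature then represents $(\bm{x}^\top\bm{x})^{r}\theta$ as a positive combination of forms $(\bm{a}^\top\bm{x})^{2(r+m)}$. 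The cubature identity you invoke is only the case $\theta \equiv 1$; perturbing it to a general positive definite $\theta$ is the entire content of the theorem and cannot be replaced by the unquantified absorption step.

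The fallback duality route has the same gap concentrated in one sentence. The claim that $\bigcap_r C_r^*$ equals the cone of moment functionals is, by the bipolar theorem, equivalent to $\cl\bigl(\bigcup_r C_r\bigr)$ containing the whole PSD cone, i.e., to the closure version of the statement being proved, so asserting it is circular. Haviland's theorem does not supply it: Haviland requires the functional to be nonnegative on \emph{all} polynomials nonnegative on the relevant set, whereas here one only knows nonnegativity on the smaller cones $C_r$; bridging that gap means solving the moment problem for functionals nonnegative on these SOS-with-denominator cones, i.e., proving a Schm\"{u}dgen/Putinar-type theorem for the sphere, which is machinery at least as heavy as Reznick's argument and is absent from the sketch. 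Your surrounding convexity reasoning is fine --- the cones $C_r$ are nested, for a convex set with nonempty interior the interior of its closure equals its own interior, and the positive definite forms are exactly $\setint$ of the PSD cone --- but it only repackages the problem; the core claim remains unproved.
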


\begin{proof}[Proof of Proposition~\ref{prop:COP_approx_poly}]
Note that $\theta(\bm{x}\lozenge\bm{x})\in H^{n,2m}$ for each $\theta\in H^{n,m}$ because the product $\lozenge$ is bilinear; thus, the set $\widetilde{\mathcal{K}}_{\mathrm{NN},r}^{n,m}(\phi(\mathbb{E}_+))$ is well-defined for each $r\in\mathbb{N}$.
$\widetilde{\mathcal{K}}_{\mathrm{NN},r}^{n,m}(\phi(\mathbb{E}_+))$ can be shown to be a closed convex cone from the counterpart properties of $\Sigma^{n,2(r+m)}$.
In the following, we prove $\widetilde{\mathcal{K}}_{\mathrm{NN},r}^{n,m}(\phi(\mathbb{E}_+)) \uparrow \widetilde{\COP}^{n,m}(\phi(\mathbb{E}_+))$.

To prove \eqref{enum:increasing}, let $\theta\in\widetilde{\mathcal{K}}_{\mathrm{NN},r}^{n,m}(\phi(\mathbb{E}_+))$.
Then, there exist $p_1,\dots,p_N\in H^{n,r+m}$ such that
\begin{equation}
(\bm{x}^\top\bm{x})^r\theta(\bm{x}\lozenge\bm{x}) = \sum_{i=1}^Np_i^2(\bm{x}). \label{eq:sos_constraint}
\end{equation}
Using this, we obtain
\begin{equation*}
(\bm{x}^\top\bm{x})^{r+1}\theta(\bm{x}\lozenge\bm{x}) = \sum_{i=1}^N\sum_{j=1}^n(x_jp_i(\bm{x}))^2 \in \Sigma^{n,2(r+m+1)},
\end{equation*}
which means that $\theta\in \widetilde{\mathcal{K}}_{{\rm NN},r+1}^{n,m}(\phi(\mathbb{E}_+))$.
Now, we assume that $\theta \not\in \widetilde{\COP}^{n,m}(\phi(\mathbb{E_+}))$.
Then, from \eqref{eq:symmetric_cone}, it follows that there exists $\widetilde{\bm{x}}\in \phi(\mathbb{E})$ such that $\theta(\widetilde{\bm{x}}\lozenge\widetilde{\bm{x}}) < 0$.
As $\widetilde{\bm{x}} \not= \bm{0}$ and $\widetilde{\bm{x}}^\top\widetilde{\bm{x}} > 0$, we have $(\widetilde{\bm{x}}^\top\widetilde{\bm{x}})^r\theta(\widetilde{\bm{x}}\lozenge\widetilde{\bm{x}}) < 0$.
However, \eqref{eq:sos_constraint} implies that $(\widetilde{\bm{x}}^\top\widetilde{\bm{x}})^r\theta(\widetilde{\bm{x}}\lozenge\widetilde{\bm{x}})$ must take a nonnegative value, which is a contradiction.
Therefore, we obtain $\theta\in \widetilde{\COP}^{n,m}(\phi(\mathbb{E_+}))$.

To prove \eqref{enum:convergence}, let $\theta\in \setint\widetilde{\COP}^{n,m}(\phi(\mathbb{E_+}))$.
Then, as $\phi(\mathbb{E_+})$ is a closed cone, it follows that $\theta(\bm{y}) > 0$ for all $\bm{y}\in \phi(\mathbb{E_+})\setminus\{\bm{0}\}$, i.e., $\theta(\bm{x}\lozenge\bm{x}) > 0$ for all $\bm{x}\in \phi(\mathbb{E})\setminus\{\bm{0}\} = \mathbb{R}^n\setminus\{\bm{0}\}$ (see \cite[Observation~1]{ZVP2006}, for example).
Thus, by Theorem~\ref{theorem:Reznick}, there exists $r_0\in\mathbb{N}$ such that $(\bm{x}^\top\bm{x})^{r_0}\theta(\bm{x}\lozenge\bm{x}) \in \Sigma^{n,2(r_0+m)}$, which means that $\theta\in \widetilde{\mathcal{K}}_{\mathrm{NN},r_0}^{n,m}(\phi(\mathbb{E}_+)) \subseteq \bigcup_{r=0}^{\infty}\widetilde{\mathcal{K}}_{\mathrm{NN},r}^{n,m}(\phi(\mathbb{E}_+))$.
\end{proof}

Note that the set $\widetilde{\mathcal{K}}_{\mathrm{NN},r}^{n,m}(\phi(\mathbb{E}_+))$ is defined by an SOS constraint.
This constraint can be written as a semidefinite constraint of size $|\mathbb{I}_{=r+m}^n| = {n+r+m-1 \choose n-1}$, which is polynomial in $n$ and $m$ for each fixed $r\in\mathbb{N}$.

\subsection{Approximation hierarchy for symmetric tensors}\label{subsec:case_tensor}
In this subsection, we translate the result of Proposition~\ref{prop:COP_approx_poly} to the case of symmetric tensors.
Given that $(x\bullet x)^r\langle \mathcal{A},(x\circ x)^{\otimes m}\rangle \in H^{n,2(r+m)}(\mathbb{E})$ for each $r\in \mathbb{N}$ and $\mathcal{A}\in\mathcal{S}^{n,m}(\mathbb{E})$, there exists a unique $\mathcal{A}^{(r)}\in \mathcal{S}^{n,2(r+m)}(\mathbb{E})$ such that
\begin{equation*}
(x\bullet x)^r \langle \mathcal{A},(x\circ x)^{\otimes m}\rangle = \langle \mathcal{A}^{(r)},x^{\otimes 2(r+m)}\rangle.
\end{equation*}
Using the symmetric tensor $\mathcal{A}^{(r)}$, we define
\begin{equation*}
 \mathcal{K}_{\mathrm{NN},r}^{n,m}(\mathbb{E}_+) \coloneqq \{\mathcal{A}\in\mathcal{S}^{n,m}(\mathbb{E})\mid \mathcal{A}^{(r)}\in \SOS^{n,2(r+m)}(\mathbb{E})\}.
\end{equation*}

\begin{theorem}
Using the orthonormal basis $v_1,\dots,v_n$ for $\mathbb{E}$,
we define $\psi\in \Hom(\mathcal{S}^{n,m}(\mathbb{E}),H^{n,m})$ in the same manner as for \eqref{eq:isom_Snm(V)_Hnm}.
Then,
\begin{enumerate}[(i)]
\item $\psi(\COP^{n,m}(\mathbb{E}_+)) = \widetilde{\COP}^{n,m}(\phi(\mathbb{E}_+))$. \label{enum:COP}
\item $\psi(\mathcal{K}_{\mathrm{NN},r}^{n,m}(\mathbb{E}_+)) = \widetilde{\mathcal{K}}_{\mathrm{NN},r}^{n,m}(\phi(\mathbb{E}_+))$. \label{enum:K}
\item Each $\mathcal{K}_{\mathrm{NN},r}^{n,m}(\mathbb{E}_+)$ is a closed convex cone, and the sequence $\{\mathcal{K}_{\mathrm{NN},r}^{n,m}(\mathbb{E}_+)\}_r$ satisfies $\mathcal{K}_{\mathrm{NN},r}^{n,m}(\mathbb{E}_+) \uparrow \COP^{n,m}(\mathbb{E}_+)$. \label{enum:K_convergence}
\end{enumerate}
\end{theorem}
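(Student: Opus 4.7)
The plan is to obtain all three statements as transports of Proposition~\ref{prop:COP_approx_poly} through the linear isomorphism $\psi$; for the pieces involving $\SOS$/$\Sigma$ I will also need the analogue of $\psi$ acting as $\mathcal{S}^{n,2(r+m)}(\mathbb{E})\to H^{n,2(r+m)}$, which by a mild abuse of notation I denote by the same symbol. The central preparatory identity is
\[
\psi\bigl(\mathcal{A}^{(r)}\bigr)(\bm{x}) \;=\; (\bm{x}^\top\bm{x})^r\,\psi(\mathcal{A})(\bm{x}\lozenge\bm{x})\qquad(\bm{x}\in\mathbb{R}^n),
\]
obtained by substituting $x\coloneqq \phi^{-1}(\bm{x})$ into~\eqref{eq:Ar}: since $\phi$ is an isometry from $(\mathbb{E},\bullet)$ to $\mathbb{R}^n$ one has $x\bullet x=\bm{x}^\top\bm{x}$, and since $\lozenge$ is defined as the pullback of $\circ$ along $\phi^{-1}$ one has $x\circ x = \phi^{-1}(\bm{x}\lozenge\bm{x})$. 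A parallel computation based on Lemma~\ref{lem:Ax^2} (applied with $x=\phi^{-1}(\bm{x})$) gives $\psi(\mathscr{S}(\mathcal{B}\otimes\mathcal{B}))(\bm{x}) = \psi(\mathcal{B})(\bm{x})^2$ for every $\mathcal{B}\in\mathcal{S}^{n,r+m}(\mathbb{E})$, and combining this with the bijectivity of $\psi$ in each degree yields the auxiliary identity $\psi\bigl(\SOS^{n,2(r+m)}(\mathbb{E})\bigr)=\Sigma^{n,2(r+m)}$.

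With these tools in hand, items \eqref{enum:COP} and \eqref{enum:K} reduce to bookkeeping. For \eqref{enum:COP}, I would use the bijection $\mathbb{E}_+\leftrightarrow\phi(\mathbb{E}_+)$ induced by $\phi$ to rewrite the nonnegativity of $\langle \mathcal{A},x^{\otimes m}\rangle$ on $\mathbb{E}_+$ as the nonnegativity of $\psi(\mathcal{A})(\bm{x})=\langle\mathcal{A},\phi^{-1}(\bm{x})^{\otimes m}\rangle$ on $\phi(\mathbb{E}_+)$. For \eqref{enum:K}, the auxiliary identity converts $\mathcal{A}^{(r)}\in\SOS^{n,2(r+m)}(\mathbb{E})$ into $\psi(\mathcal{A}^{(r)})\in\Sigma^{n,2(r+m)}$, and the central identity then turns this into $(\bm{x}^\top\bm{x})^r\psi(\mathcal{A})(\bm{x}\lozenge\bm{x})\in\Sigma^{n,2(r+m)}$, which is exactly the defining condition of $\widetilde{\mathcal{K}}_{\mathrm{NN},r}^{n,m}(\phi(\mathbb{E}_+))$.

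Item \eqref{enum:K_convergence} then follows by pulling everything back through $\psi^{-1}$. Because $\psi$ is a linear isomorphism between finite-dimensional real vector spaces it is automatically a homeomorphism, so closedness, convexity, and the cone property pass from $\widetilde{\mathcal{K}}_{\mathrm{NN},r}^{n,m}(\phi(\mathbb{E}_+))$ to $\mathcal{K}_{\mathrm{NN},r}^{n,m}(\mathbb{E}_+)$ via \eqref{enum:K}. The inclusion chain and interior-absorption statement of Proposition~\ref{prop:COP_approx_poly} translate directly to the chain $\mathcal{K}_{\mathrm{NN},r}^{n,m}(\mathbb{E}_+)\subseteq \mathcal{K}_{\mathrm{NN},r+1}^{n,m}(\mathbb{E}_+)\subseteq\COP^{n,m}(\mathbb{E}_+)$ and to $\setint\COP^{n,m}(\mathbb{E}_+)\subseteq\bigcup_r\mathcal{K}_{\mathrm{NN},r}^{n,m}(\mathbb{E}_+)$, where the latter exploits the fact that the homeomorphism $\psi^{-1}$ commutes with the interior operator. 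The only delicate step in the whole program is confirming the auxiliary identity $\psi(\SOS)=\Sigma$ — that is, that the tensor symmetrization $\mathscr{S}(\mathcal{B}\otimes\mathcal{B})$ corresponds under $\psi$ to the polynomial square $\psi(\mathcal{B})^2$; this is exactly the content of Lemma~\ref{lem:Ax^2}, and once it is in hand the remainder of the argument is formal transport.
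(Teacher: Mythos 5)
Your proposal is correct and follows essentially the same route as the paper: both arguments transport Proposition~\ref{prop:COP_approx_poly} through the linear isomorphism $\psi$, using the change of variables $x=\phi^{-1}(\bm{x})$ in \eqref{eq:Ar} and the correspondence (via Lemma~\ref{lem:Ax^2}) between $\mathscr{S}(\mathcal{B}\otimes\mathcal{B})$ and the polynomial square $\psi(\mathcal{B})^2$. The only difference is presentational — you package the SOS correspondence as the identity $\psi(\SOS^{n,2(r+m)}(\mathbb{E}))=\Sigma^{n,2(r+m)}$, while the paper unwinds elements of $\SOS^{n,2(r+m)}(\mathbb{E})$ into explicit finite sums of squares — which does not change the substance of the argument.
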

\begin{proof}
We first prove \eqref{enum:COP}.
Let $\theta = \psi(\mathcal{A}) \in \psi(\COP^{n,m}(\mathbb{E}_+))$ and $\mathcal{A} \in \COP^{n,m}(\mathbb{E}_+)$.
Then, for any $x\in \mathbb{E}$, we have
\begin{align*}
\theta(\phi(x)\lozenge \phi(x)) &= \langle \mathcal{A},\phi^{-1}(\phi(x)\lozenge\phi(x))^{\otimes m}\rangle\\
&= \langle \mathcal{A},(x\circ x)^{\otimes m}\rangle\\
&\ge 0,
\end{align*}
using $\phi^{-1}(\phi(x)\lozenge\phi(x)) = x\circ x$ and $\mathcal{A}\in\COP^{n,m}(\mathbb{E}_+)$.
Therefore, we have $\theta\in \widetilde{\COP}^{n,m}(\phi(\mathbb{E}_+))$.
Conversely, let $\theta \in \widetilde{\COP}^{n,m}(\phi(\mathbb{E}_+))$ and $\mathcal{A} \coloneqq \psi^{-1}(\theta)\in \mathcal{S}^{n,m}(\mathbb{E})$.
Then, for any $x\in \mathbb{E}$, in the same manner as the discussion above, we obtain $\langle \mathcal{A},(x\circ x)^{\otimes m}\rangle = \theta(\phi(x)\lozenge\phi(x)) \ge 0$,
using $\phi(x)\lozenge\phi(x)\in \phi(\mathbb{E}_+)$.
Therefore, $\mathcal{A}\in\COP^{n,m}(\mathbb{E}_+)$, and thus, $\theta = \psi(\mathcal{A}) \in \psi(\COP^{n,m}(\mathbb{E}_+))$.

Second, we prove \eqref{enum:K}.
Let $\theta = \psi(\mathcal{A}) \in \psi(\mathcal{K}_{\mathrm{NN},r}^{n,m}(\mathbb{E}_+))$ and $\mathcal{A} \in \mathcal{K}_{\mathrm{NN},r}^{n,m}(\mathbb{E}_+)$.
As $\mathcal{A}\in \mathcal{K}_{\mathrm{NN},r}^{n,m}(\mathbb{E}_+)$, there exist $\mathcal{A}_1,\dots,\mathcal{A}_N\in \mathcal{S}^{n,r+m}(\mathbb{E})$ such that $(x\bullet x)^r\langle \mathcal{A},(x\circ x)^{\otimes m}\rangle = \sum_{i=1}^N\langle\mathcal{A}_i,x^{\otimes (r+m)}\rangle^2$.
As $\langle \mathcal{A}_i,\phi^{-1}(\bm{x})^{\otimes (r+m)}\rangle\in H^{n,r+m}$ for each $i$, it follows that
\begin{align*}
(\bm{x}^\top\bm{x})^r\theta(\bm{x}\lozenge\bm{x}) &= (\phi^{-1}(\bm{x}) \bullet \phi^{-1}(\bm{x}))^r\langle\mathcal{A},\phi^{-1}(\bm{x}\lozenge\bm{x})^{\otimes m}\rangle\\
&= (\phi^{-1}(\bm{x}) \bullet \phi^{-1}(\bm{x}))^r\langle \mathcal{A},(\phi^{-1}(\bm{x})\circ \phi^{-1}(\bm{x}))^{\otimes m}\rangle \\
&= \sum_{i=1}^N\langle \mathcal{A}_i,\phi^{-1}(\bm{x})^{\otimes (r+m)}\rangle^2 \in \Sigma^{n,2(r+m)}.
\end{align*}
Therefore, we have $\theta\in \widetilde{\mathcal{K}}_{\mathrm{NN},r}^{n,m}(\phi(\mathbb{E}_+))$.
Conversely, let $\theta \in \widetilde{\mathcal{K}}_{\mathrm{NN},r}^{n,m}(\phi(\mathbb{E}_+))$.
Then, there exist $\mathcal{A}_1,\dots,\mathcal{A}_N\in \mathcal{S}^{n,r+m}(\mathbb{E})$ such that $(\bm{x}^\top\bm{x})^r\theta(\bm{x}\lozenge\bm{x}) = \sum_{i=1}^N\langle \mathcal{A}_i,\phi^{-1}(\bm{x})^{\otimes (r+m)}\rangle^2$.
Let $\mathcal{A} \coloneqq \psi^{-1}(\theta)\in \mathcal{S}^{n,m}(\mathbb{E})$.
Then, in the same manner as in \eqref{enum:COP}, we have
\begin{align*}
(x\bullet x)^r\langle\mathcal{A},(x\circ x)^{\otimes m}\rangle &= (\phi(x)^\top\phi(x))^r\theta(\phi(x)\lozenge\phi(x))\\
&= \sum_{i=1}^N\langle \mathcal{A}_i,x^{\otimes (r+m)}\rangle^2 \in \Sigma^{n,2(r+m)}(\mathbb{E}).
\end{align*}

Finally, \eqref{enum:K_convergence} can be proven by the linear isomorphism of $\psi$ and $\widetilde{\mathcal{K}}_{\mathrm{NN},r}^{n,m}(\phi(\mathbb{E}_+)) \uparrow \widetilde{\COP}^{n,m}(\phi(\mathbb{E_+}))$.
\end{proof}

Note that $\mathcal{K}_{\mathrm{NN},r}^{n,m}(\mathbb{E}_+)$ does not depend on the choice of the orthonormal basis $v_1,\dots,v_n$ or isometry $\phi$.
We call the sequence $\{\mathcal{K}_{\mathrm{NN},r}^{n,m}(\mathbb{E}_+)\}_r$ the NN-type inner-approximation hierarchy.
In the case where the symmetric cone $\mathbb{E}_+$ is the nonnegative orthant $\mathbb{R}_+^n$, this hierarchy is essentially identical to the SOS-based one provided by Iqbal and Ahmed~\cite[Eq.~(19)]{IA2022}.
They are generalizations of that provided by Parrilo~\cite{Parrilo2000} to tensors.

\begin{remark}\label{rem:SOScone}
The NN-type inner-approximation hierarchy can be extended to the \textit{SOS cones} proposed by Papp and Alizadeh~\cite{PA2013}.
Let $\mathbb{A}$ and $ \mathbb{B}$ be real inner product spaces of dimensions $l$ and $n$, respectively, and let $\diamond\colon\mathbb{A}\times \mathbb{A}\to \mathbb{B}$ be a bilinear mapping.
The SOS cone is then defined as $\Sigma_{\diamond} \coloneqq \conv\{x_i \diamond x_i \mid x_i\in \mathbb{A}\}$.
Note that each element of $\Sigma_{\diamond}$ can be written as the sum of at most $n$ elements $x_1\diamond x_1,\dots,x_n\diamond x_n$ such that $x_1,\dots,x_n\in\mathbb{A}$, by Carath\'{e}odory's theorem for cones.
If $(\mathbb{A},\mathbb{B},\diamond)$ is formally real, or equivalently, if $\Sigma_{\diamond}$ is proper~\cite[Theorem~3.3]{PA2013}, then
\begin{equation*}
\mathcal{K}_r^{n,m}(\Sigma_{\diamond}) \coloneqq \left\{\mathcal{A}\in\mathcal{S}^{n,m}(\mathbb{B}) \relmiddle|
\begin{aligned}
\left(\sum_{i=1}^nx_i \bullet_{\mathbb{A}}x_i\right)^r\left\langle \mathcal{A},\left(\sum_{i=1}^n x_i\diamond x_i\right)^{\otimes m}\right\rangle\\
\in \Sigma^{ln,2(r+m)}(\mathbb{A}^n)
\end{aligned}
\right\}
\end{equation*}
is a closed convex cone for each $r\in\mathbb{N}$ and satisfies $\mathcal{K}_r^{n,m}(\Sigma_{\diamond}) \uparrow \COP^{n,m}(\Sigma_{\diamond})$, where $\bullet_{\mathbb{A}}$ denotes the inner product on $\mathbb{A}$.
\end{remark}

\subsection{Dual of $\mathcal{K}_{\mathrm{NN},r}^{n,m}(\mathbb{E}_+)$}\label{subsec:dual}
Next, we discuss the dual cone of $\mathcal{K}_{\mathrm{NN},r}^{n,m}(\mathbb{E}_+)$.
By considering its dual, we can provide an outer-approximation hierarchy for the CP cone $\CP^{n,m}(\mathbb{E}_+)$.
Although the closure hull operator is generally required to describe the dual cone, we succeeded in removing it for the case in which the symmetric cone $\mathbb{E}_+$ is the nonnegative orthant $\mathbb{R}_+^n$.

Let $\mathcal{C}^{(r)}\colon \mathcal{S}^{n,2(r+m)}(\mathbb{E})\to\mathcal{S}^{n,m}(\mathbb{E})$ be the adjoint of the linear mapping $\mathcal{A} \mapsto \mathcal{A}^{(r)}$, so that
\begin{equation}
\langle \mathcal{A},\mathcal{C}^{(r)}(\mathcal{X})\rangle = \langle\mathcal{A}^{(r)},\mathcal{X}\rangle \text{ for all  $\mathcal{A}\in\mathcal{S}^{n,m}(\mathbb{E})$ and $\mathcal{X}\in\mathcal{S}^{n,2(r+m)}(\mathbb{E})$}. \label{eq:C(X)}
\end{equation}
Using the linear mapping $\mathcal{C}^{(r)}$, we define $\mathcal{C}_r^{n,m}(\mathbb{E}_+) \coloneqq \{\mathcal{C}^{(r)}(\mathcal{X}) \mid \mathcal{X} \in \MOM^{n,2(r+m)}(\mathbb{E})\}$.

\begin{proposition}
It follows that $\mathcal{K}_{\mathrm{NN},r}^{n,m}(\mathbb{E}_+) = \mathcal{C}_r^{n,m}(\mathbb{E}_+)^*$, and thus, $\mathcal{K}_{\mathrm{NN},r}^{n,m}(\mathbb{E}_+)^* = \cl\mathcal{C}_r^{n,m}(\mathbb{E}_+)$.
\end{proposition}
\begin{proof}
Let $\mathcal{A}\in \mathcal{K}_{\mathrm{NN},r}^{n,m}(\mathbb{E}_+)$.
Then, $\mathcal{A}^{(r)}\in\SOS^{n,2(r+m)}(\mathbb{E})$ by definition.
For any $\mathcal{X}\in \MOM^{n,2(r+m)}(\mathbb{E})$, it follows from \eqref{eq:C(X)} and the duality between $\MOM^{n,2(r+m)}(\mathbb{E})$ and $\SOS^{n,2(r+m)}(\mathbb{E})$ that $\langle \mathcal{A},\mathcal{C}^{(r)}(\mathcal{X})\rangle = \langle \mathcal{A}^{(r)},\mathcal{X}\rangle \ge 0$, which means that $\mathcal{A}\in \mathcal{C}_r^{n,m}(\mathbb{E}_+)^*$.

Conversely, let $\mathcal{A}\in \mathcal{C}_r^{n,m}(\mathbb{E}_+)^*$.
Then, for any $\mathcal{X}\in \MOM^{n,2(r+m)}(\mathbb{E})$, because $\mathcal{C}^{(r)}(\mathcal{X}) \in \mathcal{C}_r^{n,m}(\mathbb{E}_+)$, it follows that $\langle \mathcal{A}^{(r)},\mathcal{X}\rangle = \langle \mathcal{A},\mathcal{C}^{(r)}(\mathcal{X})\rangle \ge 0$.
Therefore, $\mathcal{A}^{(r)} \in \MOM^{n,2(r+m)}(\mathbb{E})^* = \SOS^{n,2(r+m)}(\mathbb{E})$, which means that $\mathcal{A}\in \mathcal{K}_{\mathrm{NN},r}^{n,m}(\mathbb{E}_+)$.

Given that $\mathcal{C}_r^{n,m}(\mathbb{E}_+)$ is a convex cone, by taking the dual, we have $\mathcal{K}_{\mathrm{NN},r}^{n,m}(\mathbb{E}_+)^* = \cl\mathcal{C}_r^{n,m}(\mathbb{E}_+)$.
\end{proof}

We could not prove that $\mathcal{C}_r^{n,m}(\mathbb{E}_+)$ itself is closed, and the closure hull operator is required to describe the dual of $\mathcal{K}_{\mathrm{NN},r}^{n,m}(\mathbb{E}_+)$.
Therefore, whether $\mathcal{K}_{\mathrm{NN},r}^{n,m}(\mathbb{E}_+)$ and $\mathcal{C}_r^{n,m}(\mathbb{E}_+)$ itself are dual to each other is an open problem for a general symmetric cone $\mathbb{E}_+$.
The difficulty originates from a lack of understanding of the moment cone $\MOM^{n,2m}(\mathbb{E})$.

As a special case, we consider the symmetric cone $\mathbb{E}_+$ to be the nonnegative orthant $\mathbb{R}_+^n$.
In this case, we demonstrated in Proposition~3.9 that the dual of $\SOS^{n,2m}(\mathbb{R}^n)$ can be explicitly described by a semidefinite constraint;
this answers a question posed by Chen et al.~\cite{CLQ2016} of whether the membership problem of the dual of $\SOS^{n,2m}(\mathbb{R}^n)$ can be solved in polynomial time or not.
Furthermore, the closedness of $\mathcal{C}_r^{n,m}(\mathbb{R}_+^n)$ (written as $\mathcal{C}_r^{n,m}$ hereafter) is shown by exploiting this result.

\begin{definition}
For $\mathcal{X} \in \mathcal{S}^{n,2m}$, let $\bm{M}^{n,m}(\mathcal{X})$ be a matrix in $\mathbb{S}^{\mathbb{I}_{=m}^n}$ with the $(\bm{\alpha},\bm{\beta})$th element $\mathcal{X}_{[\bm{\alpha}+\bm{\beta}]}$ for $\bm{\alpha},\bm{\beta}\in\mathbb{I}_{=m}^n$.
Then, we define $\mathcal{M}^{n,2m} \coloneqq \{\mathcal{X}\in\mathcal{S}^{n,2m} \mid \bm{M}^{n,m}(\mathcal{X})\in\mathbb{S}_+^{\mathbb{I}_{=m}^n}\}$.
\end{definition}

We aim to demonstrate that the dual of $\SOS^{n,2m}(\mathbb{R}^n)$ agrees with $\mathcal{M}^{n,2m}$.

\begin{lemma}\label{eq:mom_matrix}
For $\mathcal{A}\in\mathcal{S}^{n,m}$, let $\bm{\theta}$ be the element of $\mathbb{R}^{\mathbb{I}_{=m}^n}$ satisfying \eqref{eq:Ax_thetax}.
Then, $\langle \mathscr{S}(\mathcal{A}\otimes \mathcal{A}),\mathcal{X}\rangle = \bm{\theta}^\top \bm{M}^{n,m}(\mathcal{X})\bm{\theta}$ for all $\mathcal{X}\in\mathcal{S}^{n,2m}$.
\end{lemma}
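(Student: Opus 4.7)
The plan is to reduce the identity to the rank-one case $\mathcal{X}=\bm{x}^{\otimes 2m}$ by a polarization argument. First I would note that both sides of the claimed equality depend linearly on $\mathcal{X}\in\mathcal{S}^{n,2m}$: the left-hand side by linearity of the inner product, and the right-hand side because each entry $\bm{M}^{n,m}(\mathcal{X})_{\bm{\alpha},\bm{\beta}}=\mathcal{X}_{[\bm{\alpha}+\bm{\beta}]}$ is just a coordinate of $\mathcal{X}$. It therefore suffices to verify the identity on a spanning family of $\mathcal{S}^{n,2m}$, for which I would take $\{\bm{x}^{\otimes 2m}\mid \bm{x}\in\mathbb{R}^n\}$. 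That this family spans $\mathcal{S}^{n,2m}$ follows by duality: if $\mathcal{Y}\in\mathcal{S}^{n,2m}$ satisfies $\langle \mathcal{Y},\bm{x}^{\otimes 2m}\rangle=0$ for every $\bm{x}\in\mathbb{R}^n$, then the isomorphism \eqref{eq:isom_Snm(V)_Hnm} sends $\mathcal{Y}$ to the zero polynomial in $H^{n,2m}$, forcing $\mathcal{Y}=\bm{0}$.

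With $\mathcal{X}=\bm{x}^{\otimes 2m}$, the coordinate $(\bm{x}^{\otimes 2m})_{[\bm{\gamma}]}$ equals $\bm{x}^{\bm{\gamma}}$ for every $\bm{\gamma}\in\mathbb{I}_{=2m}^n$, so
\begin{equation*}
\bm{\theta}^\top\bm{M}^{n,m}(\bm{x}^{\otimes 2m})\bm{\theta}
=\sum_{\bm{\alpha},\bm{\beta}\in\mathbb{I}_{=m}^n}\theta_{\bm{\alpha}}\theta_{\bm{\beta}}\,\bm{x}^{\bm{\alpha}+\bm{\beta}}
=\left(\sum_{\bm{\alpha}\in\mathbb{I}_{=m}^n}\theta_{\bm{\alpha}}\bm{x}^{\bm{\alpha}}\right)^{\!2}
=\langle \mathcal{A},\bm{x}^{\otimes m}\rangle^{2},
\end{equation*}
and Lemma~\ref{lem:Ax^2} rewrites the last expression as $\langle \mathscr{S}(\mathcal{A}\otimes\mathcal{A}),\bm{x}^{\otimes 2m}\rangle$, which is exactly the left-hand side evaluated at $\mathcal{X}=\bm{x}^{\otimes 2m}$. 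Combined with the linearity reduction, this yields the identity for every $\mathcal{X}\in\mathcal{S}^{n,2m}$.

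The main (minor) obstacle is justifying the span claim, but it is an immediate consequence of the isomorphism $\mathcal{S}^{n,2m}\simeq H^{n,2m}$ already recorded in the excerpt, together with the fact that a homogeneous polynomial on $\mathbb{R}^n$ vanishing identically is the zero polynomial; no new machinery is required. As an alternative, I could expand $\mathcal{A}\otimes\mathcal{A}$ in coordinates, apply Lemma~\ref{lem:tensor_inner_product} to replace $\mathscr{S}(\mathcal{A}\otimes\mathcal{A})$ by $\mathcal{A}\otimes\mathcal{A}$ inside the pairing with $\mathcal{X}\in\mathcal{S}^{n,2m}$, group the resulting sum by the multi-indices of the first and last $m$ coordinates, and convert back to the $\theta$ coefficients via $\theta_{\bm{\alpha}}=(m!/\bm{\alpha}!)\mathcal{A}_{[\bm{\alpha}]}$ (a one-line consequence of \eqref{eq:Ax_thetax}); but the polarization route above is shorter and avoids the bookkeeping.
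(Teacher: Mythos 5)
Your proof is correct, but it takes a genuinely different route from the paper's. The paper argues entirely in coordinates with respect to the orthonormal basis $(\mathcal{F}_{\bm{\gamma}})_{\bm{\gamma}\in\mathbb{I}_{=2m}^n}$: it reads off the coefficients of $\mathscr{S}(\mathcal{A}\otimes\mathcal{A})$ via Lemma~\ref{lem:A_orth_coef}, expands $\mathcal{X}$ via Lemma~\ref{lem:orth_basis}, and multiplies the coefficient vectors, which requires tracking the normalization factors $\sqrt{\bm{\gamma}!/(2m)!}$ and $\sqrt{(2m)!/\bm{\gamma}!}$ until they cancel. You instead observe that both sides are linear in $\mathcal{X}$ and verify the identity only on the rank-one tensors $\bm{x}^{\otimes 2m}$, where it collapses to the square of the polynomial identity \eqref{eq:Ax_thetax} together with Lemma~\ref{lem:Ax^2}; this bypasses Lemmas~\ref{lem:orth_basis} and~\ref{lem:A_orth_coef} entirely and eliminates the combinatorial bookkeeping. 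The price is the spanning claim for $\{\bm{x}^{\otimes 2m}\mid\bm{x}\in\mathbb{R}^n\}$ in $\mathcal{S}^{n,2m}$, which you justify correctly: an element of $\mathcal{S}^{n,2m}$ orthogonal to every $\bm{x}^{\otimes 2m}$ maps under the isomorphism \eqref{eq:isom_Snm(V)_Hnm} to a homogeneous polynomial vanishing identically on $\mathbb{R}^n$, hence to zero, and in a finite-dimensional inner product space a subspace with trivial orthogonal complement is the whole space. Your route is shorter and arguably more transparent; the paper's route has the side benefit of exhibiting the explicit $\mathcal{F}$-coefficients of $\mathscr{S}(\mathcal{A}\otimes\mathcal{A})$, though these are not reused elsewhere.
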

\begin{proof}
Let $(\mathcal{F}_{\bm{\alpha}})_{\bm{\alpha}\in \mathbb{I}_{=m}^n}$ be the orthonormal basis for $\mathcal{S}^{n,m}$ defined in Lemma~\ref{lem:orth_basis}.
From Lemma~\ref{lem:Ax^2} and \eqref{eq:Ax_thetax}, we have
\begin{equation*}
\langle \mathscr{S}(\mathcal{A}\otimes \mathcal{A}),\bm{x}^{\otimes 2m}\rangle = \langle \mathcal{A},\bm{x}^{\otimes m}\rangle^2 =  \sum_{\bm{\gamma}\in \mathbb{I}_{=2m}^n}\left(\sum_{\substack{\bm{\alpha},\bm{\beta}\in \mathbb{I}_{=m}^n \\ \bm{\gamma} = \bm{\alpha} + \bm{\beta}}} \theta_{\bm{\alpha}}\theta_{\bm{\beta}}\right)\bm{x}^{\bm{\gamma}}. \label{eq:Ax_thetax_squared}
\end{equation*}
Therefore, from Lemma~\ref{lem:A_orth_coef}, when we represent $\mathscr{S}(\mathcal{A}\otimes \mathcal{A})$ with the orthonormal basis $(\mathcal{F}_{\bm{\gamma}})_{\bm{\gamma}\in \mathbb{I}_{=2m}^n}$ for $\mathcal{S}^{n,2m}$, the coefficient $\mathscr{S}(\mathcal{A}\otimes \mathcal{A})_{\bm{\gamma}}$ can be written as
\begin{equation*}
\mathscr{S}(\mathcal{A}\otimes \mathcal{A})_{\bm{\gamma}}^{\mathcal{F}} = \sqrt{\frac{\bm{\gamma}!}{(2m)!}}\sum_{\substack{\bm{\alpha},\bm{\beta}\in \mathbb{I}_{=m}^n \\ \bm{\gamma} = \bm{\alpha} + \bm{\beta}}}\theta_{\bm{\alpha}}\theta_{\bm{\beta}}
\end{equation*}
for each $\bm{\gamma}\in \mathbb{I}_{=2m}^n$.
In addition, from Lemma~\ref{lem:orth_basis}, each $\mathcal{X}\in \mathcal{S}^{n,2m}$ can be written as
\begin{equation*}
\mathcal{X} = \sum_{\bm{\gamma}\in \mathbb{I}_{=2m}^n}\sqrt{\frac{(2m)!}{\bm{\gamma}!}}\mathcal{X}_{[\bm{\gamma}]}\mathcal{F}_{\bm{\gamma}}.
\end{equation*}
Then, we have
\begin{align*}
\langle \mathscr{S}(\mathcal{A}\otimes \mathcal{A}),\mathcal{X}\rangle &= \sum_{\bm{\gamma}\in \mathbb{I}_{=2m}^n}\left(\sqrt{\frac{\bm{\gamma}!}{(2m)!}}\sum_{\substack{\bm{\alpha},\bm{\beta}\in \mathbb{I}_{=m}^n \\ \bm{\gamma} = \bm{\alpha} + \bm{\beta}}}\theta_{\bm{\alpha}}\theta_{\bm{\beta}}\right)\left(\sqrt{\frac{(2m)!}{\bm{\gamma}!}}\mathcal{X}_{[\bm{\gamma}]}\right)\\
&= \sum_{\bm{\alpha},\bm{\beta}\in \mathbb{I}_{=m}^n}\mathcal{X}_{[\bm{\alpha}+\bm{\beta}]}\theta_{\bm{\alpha}}\theta_{\bm{\beta}}\\
&= \bm{\theta}^\top\bm{M}^{n,m}(\mathcal{X})\bm{\theta}.
\end{align*}
\end{proof}

\begin{proposition}\label{prop:MOM}
The dual of $\SOS^{n,2m}(\mathbb{R}^n)$ is $\mathcal{M}^{n,2m}$.
In particular, it follows that $\mathcal{M}^{n,2m} = \MOM^{n,2m}(\mathbb{R}^n)$.

\end{proposition}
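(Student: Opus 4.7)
The plan is to reduce everything to Lemma~\ref{eq:mom_matrix} and invoke the definition of $\MOM^{n,2m}(\mathbb{R}^n)$ as $\SOS^{n,2m}(\mathbb{R}^n)^*$. First I would show that $\MOM^{n,2m} = \SOS^{n,2m}(\mathbb{R}^n)^*$, from which the identification $\MOM^{n,2m} = \MOM^{n,2m}(\mathbb{R}^n)$ is immediate, and then dualize once more to obtain the mutual duality (using that $\SOS^{n,2m}(\mathbb{R}^n)$ is a closed convex cone, as noted at the end of Sect.~\ref{subsec:tensor}).

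For the key inclusions, observe that by the linear isomorphism between $\mathcal{S}^{n,m}$ and $H^{n,m}$ described in Sect.~\ref{subsec:notation}, the correspondence $\mathcal{A}\mapsto \bm{\theta} = (\theta_{\bm{\alpha}})_{\bm{\alpha}\in\mathbb{I}^n_{=m}}$ defined by \eqref{eq:Ax_thetax} is a bijection onto $\mathbb{R}^{\mathbb{I}^n_{=m}}$. Suppose $\mathcal{X}\in\MOM^{n,2m}$, i.e., $\bm{M}^{n,m}(\mathcal{X})\in\mathbb{S}_+^{\mathbb{I}^n_{=m}}$. Then for every $\mathcal{A}\in\mathcal{S}^{n,m}$, Lemma~\ref{eq:mom_matrix} yields
\begin{equation*}
\langle \mathscr{S}(\mathcal{A}\otimes\mathcal{A}),\mathcal{X}\rangle = \bm{\theta}^\top\bm{M}^{n,m}(\mathcal{X})\bm{\theta}\ge 0.
\end{equation*}
Since every element of $\SOS^{n,2m}(\mathbb{R}^n)$ is a convex combination (equivalently, a finite nonnegative sum) of terms of the form $\mathscr{S}(\mathcal{A}\otimes\mathcal{A})$, this shows $\mathcal{X}\in \SOS^{n,2m}(\mathbb{R}^n)^* = \MOM^{n,2m}(\mathbb{R}^n)$. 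Conversely, if $\mathcal{X}\in\MOM^{n,2m}(\mathbb{R}^n)$, then $\langle\mathscr{S}(\mathcal{A}\otimes\mathcal{A}),\mathcal{X}\rangle\ge 0$ for every $\mathcal{A}\in\mathcal{S}^{n,m}$, so by Lemma~\ref{eq:mom_matrix} again, $\bm{\theta}^\top\bm{M}^{n,m}(\mathcal{X})\bm{\theta}\ge 0$ for every $\bm{\theta}\in\mathbb{R}^{\mathbb{I}^n_{=m}}$ (using the bijectivity of $\mathcal{A}\mapsto\bm{\theta}$). Therefore $\bm{M}^{n,m}(\mathcal{X})\in\mathbb{S}_+^{\mathbb{I}^n_{=m}}$ and $\mathcal{X}\in\MOM^{n,2m}$.

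Having established $\MOM^{n,2m} = \MOM^{n,2m}(\mathbb{R}^n) = \SOS^{n,2m}(\mathbb{R}^n)^*$, I would conclude the mutual duality statement by taking duals once more: because $\SOS^{n,2m}(\mathbb{R}^n)$ is a closed convex cone,
\begin{equation*}
(\MOM^{n,2m})^* = (\SOS^{n,2m}(\mathbb{R}^n)^*)^* = \SOS^{n,2m}(\mathbb{R}^n),
\end{equation*}
completing the proof. I do not anticipate a real obstacle here; Lemma~\ref{eq:mom_matrix} already does the substantive computational work by encoding the quadratic form $\bm{\theta}\mapsto\bm{\theta}^\top\bm{M}^{n,m}(\mathcal{X})\bm{\theta}$ through the pairing with $\SOS$ tensors, and the remaining argument is purely a matter of unwinding definitions and applying standard bipolar duality.
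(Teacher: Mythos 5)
Your proposal is correct and follows essentially the same route as the paper: both directions of $\MOM^{n,2m} = \SOS^{n,2m}(\mathbb{R}^n)^*$ are obtained from Lemma~\ref{eq:mom_matrix} together with the bijection $\mathcal{A}\mapsto\bm{\theta}$, and the mutual duality then follows from the closedness of $\SOS^{n,2m}(\mathbb{R}^n)$ via the bipolar theorem. No gaps.
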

\begin{proof}
Let $\mathcal{X}\in \mathcal{M}^{n,2m}$.
For each $\mathcal{A}\in \SOS^{n,2m}(\mathbb{R}^n)$, there exist $\mathcal{A}^{(1)},\dots,\mathcal{A}^{(k)}\in\mathcal{S}^{n,m}$ such that $\mathcal{A} = \sum_{i=1}^k\mathscr{S}(\mathcal{A}^{(i)}\otimes \mathcal{A}^{(i)})$.
For each $\mathcal{A}^{(i)}$, let $\bm{\theta}^{(i)} = (\theta_{\bm{\alpha}}^{(i)})_{\bm{\alpha}\in \mathbb{I}_{=m}^n} \in \mathbb{R}^{\mathbb{I}_{=m}^n}$ be such that it satisfies \eqref{eq:Ax_thetax}.
Then, it follows from Lemma~\ref{eq:mom_matrix} and $\bm{M}^{n,m}(\mathcal{X}) \in \mathbb{S}_+^{\mathbb{I}_{=m}^n}$ that
\begin{equation*}
\langle \mathcal{A},\mathcal{X}\rangle = \sum_{i=1}^k\langle\mathscr{S}(\mathcal{A}^{(i)}\otimes \mathcal{A}^{(i)}),\mathcal{X}\rangle = \sum_{i=1}^k(\bm{\theta}^{(i)})^\top \bm{M}^{n,m}(\mathcal{X})\bm{\theta}^{(i)} \ge 0,
\end{equation*}
which means that $\mathcal{X}\in \SOS^{n,2m}(\mathbb{R}^n)^*$.

Conversely, suppose that $\mathcal{X}\in \SOS^{n,2m}(\mathbb{R}^n)^*$.
We take $\bm{\theta} = (\theta_{\bm{\alpha}})_{\bm{\alpha}\in \mathbb{I}_{=m}^n} \in \mathbb{R}^{\mathbb{I}_{=m}^n}$ arbitrarily and let $\mathcal{A}$ be the element of $\mathcal{S}^{n,m}$ satisfying \eqref{eq:Ax_thetax}.
Then, it follows from Lemma~\ref{eq:mom_matrix} and $\mathscr{S}(\mathcal{A}\otimes \mathcal{A})\in \SOS^{n,2m}(\mathbb{R}^n)$ that $\bm{\theta}^\top\bm{M}^{n,m}(\mathcal{X})\bm{\theta} = \langle \mathscr{S}(\mathcal{A}\otimes \mathcal{A}),\mathcal{X}\rangle \ge 0.$
Therefore, $\mathcal{X}\in\mathcal{M}^{n,2m}$.

By the definition of $\MOM^{n,2m}(\mathbb{R}^n)$, it follows that $\MOM^{n,2m}(\mathbb{R}^n) = \SOS^{n,2m}(\mathbb{R}^n)^*$.
Combining this with $\SOS^{n,2m}(\mathbb{R}^n)^* = \mathcal{M}^{n,2m}$ shown above, we have $\mathcal{M}^{n,2m} = \MOM^{n,2m}(\mathbb{R}^n)$.
\end{proof}

Using Proposition~\ref{prop:MOM}, we show the closedness of $\mathcal{C}_r^{n,m}$.
When the Euclidean Jordan algebra $(\mathbb{E},\circ,\bullet)$ is that given by Example~\ref{ex:nno}, the $(i_1,\dots,i_m)$th element of $\mathcal{C}^{(r)}(\mathcal{X})\in \mathcal{S}^{n,m}$ is
\begin{equation}
\mathcal{C}^{(r)}(\mathcal{X})_{i_1\cdots i_m} = \sum_{\bm{\alpha}\in \mathbb{I}_{=r}^n}\frac{r!}{\bm{\alpha}!}\mathcal{X}_{[2\bm{\alpha} + 2\sum_{l=1}^m\bm{e}_{i_l}]}. \label{eq:C(X)_nno}
\end{equation}
See \cite[Eq.~(29)]{IA2022} for this derivation.

\begin{lemma}\label{lem:odd_zero}
Suppose that $\mathcal{X}\in\mathcal{M}^{n,2m}$, i.e., $\bm{M}^{n,m}(\mathcal{X}) \in \mathbb{S}_+^{\mathbb{I}_{=m}^n}$.
Using $\mathcal{X}$, we define $\mathcal{X}'\in \mathcal{S}^{n,2m}$ as
\begin{equation*}
\mathcal{X}'_{[\bm{\gamma}]} \coloneqq
\begin{cases}
\mathcal{X}_{[\bm{\gamma}]} & (\text{if all of the elements of $\bm{\gamma}$ are even}),\\
0 & (\text{if some of the elements of $\bm{\gamma}$ are odd})
\end{cases}
\end{equation*}
for each $\bm{\gamma}\in \mathbb{I}_{=2m}^n$.
It then follows that $\bm{M}^{n,m}(\mathcal{X}') \in  \mathbb{S}_+^{\mathbb{I}_{=m}^n}$.
\end{lemma}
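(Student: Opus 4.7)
The plan is to show that $\bm{M}^{n,m}(\mathcal{X}')$ is positive semidefinite by exhibiting it, up to a permutation of row/column indices, as a block-diagonal matrix whose diagonal blocks are principal submatrices of $\bm{M}^{n,m}(\mathcal{X})$. Since principal submatrices of a PSD matrix are PSD, this will suffice.

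The key elementary observation is that for $\bm{\alpha},\bm{\beta}\in \mathbb{I}_{=m}^n$, every component of $\bm{\alpha}+\bm{\beta}$ is even if and only if $\bm{\alpha}\equiv \bm{\beta}\pmod{2}$ componentwise. Hence, partitioning $\mathbb{I}_{=m}^n$ into the classes
\begin{equation*}
\mathcal{I}_{\bm{\nu}} \coloneqq \{\bm{\alpha}\in \mathbb{I}_{=m}^n \mid \bm{\alpha}\equiv \bm{\nu}\pmod{2}\},\qquad \bm{\nu}\in\{0,1\}^n,
\end{equation*}
the off-diagonal-block entries of $\bm{M}^{n,m}(\mathcal{X}')$ (those with $\bm{\alpha}\in \mathcal{I}_{\bm{\nu}}$ and $\bm{\beta}\in \mathcal{I}_{\bm{\nu}'}$ for $\bm{\nu}\neq \bm{\nu}'$) are all zero, while the diagonal-block entries coincide with those of $\bm{M}^{n,m}(\mathcal{X})$.

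From this structural description the conclusion is immediate. For an arbitrary $\bm{\theta} = (\theta_{\bm{\alpha}})_{\bm{\alpha}\in \mathbb{I}_{=m}^n}\in \mathbb{R}^{\mathbb{I}_{=m}^n}$, writing $\bm{\theta}_{\bm{\nu}}$ for the restriction of $\bm{\theta}$ to $\mathcal{I}_{\bm{\nu}}$ and $\bm{M}_{\bm{\nu}}$ for the principal submatrix of $\bm{M}^{n,m}(\mathcal{X})$ indexed by $\mathcal{I}_{\bm{\nu}}$, one has
\begin{equation*}
\bm{\theta}^\top \bm{M}^{n,m}(\mathcal{X}')\bm{\theta} = \sum_{\bm{\nu}\in\{0,1\}^n}\bm{\theta}_{\bm{\nu}}^\top \bm{M}_{\bm{\nu}}\bm{\theta}_{\bm{\nu}}.
\end{equation*}
Each $\bm{M}_{\bm{\nu}}$ is a principal submatrix of the PSD matrix $\bm{M}^{n,m}(\mathcal{X})$ and therefore PSD, so every term on the right-hand side is nonnegative, giving $\bm{M}^{n,m}(\mathcal{X}')\in \mathbb{S}_+^{\mathbb{I}_{=m}^n}$.

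There is no real obstacle here; the only point requiring a small amount of care is to verify that the partition of the row/column index set $\mathbb{I}_{=m}^n$ into parity classes $\mathcal{I}_{\bm{\nu}}$ is exactly the one that separates the zero entries from the possibly nonzero entries of $\bm{M}^{n,m}(\mathcal{X}')$, which follows directly from the componentwise parity characterization above. (Alternatively, one could prove the same statement by averaging $\bm{M}^{n,m}(\mathcal{X})$ conjugated by the $2^n$ diagonal sign-change matrices $\Diag(\bm{\epsilon}^{\bm{\alpha}})_{\bm{\alpha}\in \mathbb{I}_{=m}^n}$ for $\bm{\epsilon}\in\{-1,+1\}^n$, obtaining $\bm{M}^{n,m}(\mathcal{X}')$ as a convex combination of PSD matrices, but the block-diagonal argument is more direct.)
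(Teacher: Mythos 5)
Your proof is correct and follows essentially the same route as the paper: both partition the index set $\mathbb{I}_{=m}^n$ into parity classes, observe that all components of $\bm{\alpha}+\bm{\beta}$ are even exactly when $\bm{\alpha}$ and $\bm{\beta}$ lie in the same class, and conclude that $\bm{M}^{n,m}(\mathcal{X}')$ is, up to permutation, block-diagonal with blocks equal to principal submatrices of the PSD matrix $\bm{M}^{n,m}(\mathcal{X})$. The only cosmetic difference is that you index the classes by all of $\{0,1\}^n$ while the paper separates the all-even class from the $2^n-1$ classes with some odd entry; this changes nothing.
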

\begin{proof}
The set $\mathbb{I}_{=m}^n$ is partitioned as two disjoint sets $\mathbb{I}_{=m,\mathrm{even}}^n$ and $\mathbb{I}_{=m,\mathrm{odd}}^n$, where
\begin{align*}
\mathbb{I}_{=m,\mathrm{even}}^n &\coloneqq \{\bm{\alpha}\in \mathbb{I}_{=m}^n\mid \text{All of the elements of $\bm{\alpha}$ are even}\},\\
\mathbb{I}_{=m,\mathrm{odd}}^n &\coloneqq \{\bm{\alpha}\in \mathbb{I}_{=m}^n\mid \text{Some of the elements of $\bm{\alpha}$ are odd}\}.
\end{align*}
In addition, the elements of $\{0,1\}^n\setminus \{\bm{0}\}$ are ordered as $\bm{\delta}_1,\dots,\bm{\delta}_{2^n-1}$ (e.g., $\bm{\delta}_1 = (1,0,\dots,0)$). Then, let
\begin{equation*}
\mathbb{I}_{=m,\mathrm{odd},i}^n \coloneqq \{\bm{\alpha} \in \mathbb{I}_{=m,\mathrm{odd}}^n\mid \text{All of the elements of $\bm{\alpha}-\bm{\delta}_i$ are even}\}
\end{equation*}
for $i = 1,\dots,2^n-1$.
Note that the parity between $\bm{\alpha}$ and $\bm{\delta}_i$ agrees for every $\bm{\alpha}\in \mathbb{I}_{=m,\mathrm{odd},i}^n$ and that the sets $\mathbb{I}_{=m,\mathrm{odd},i}^n$ $(i = 1,\dots,2^n-1)$ are disjoint.
Then, for $\bm{\alpha},\bm{\beta}\in \mathbb{I}_{=m}^n$, all elements of $\bm{\alpha} + \bm{\beta}$ are even if and only if $\bm{\alpha}$ and $\bm{\beta}$ belong to the same set of $\mathbb{I}_{=m,\mathrm{even}}^n,\mathbb{I}_{=m,\mathrm{odd},1}^n,\dots,\mathbb{I}_{=m,\mathrm{odd},2^n-1}^n$.
Therefore, from the definition of $\mathcal{X}'$, we note that
\begin{equation*}
\bm{M}^{n,m}(\mathcal{X}')_{IJ} = \begin{cases}
\bm{M}^{n,m}(\mathcal{X})_{IJ} & (\text{if $I = J$}),\\
\bm{O} & (\text{otherwise})
\end{cases}
\end{equation*}
for $I$, $J = \mathbb{I}_{=m,\mathrm{even}}^n, \mathbb{I}_{=m,\mathrm{odd},1}^n,\dots,\mathbb{I}_{=m,\mathrm{odd},2^n-1}^n$, where $\bm{M}^{n,m}(\mathcal{X}')_{IJ}$ is the submatrix obtained by extracting the rows of $\bm{M}^{n,m}(\mathcal{X}')$ indexed by $I$ and columns indexed by $J$.
Because $\bm{M}^{n,m}(\mathcal{X})$ is semidefinite, $\bm{M}^{n,m}(\mathcal{X}')$ is as well.
\end{proof}

\begin{proposition}
$\mathcal{C}_r^{n,m}$ is a closed convex cone, and thus, $\mathcal{K}_{\mathrm{NN},r}^{n,m}(\mathbb{R}_+^n)$ and $\mathcal{C}_r^{n,m}$ are dual to each other.
\end{proposition}
\begin{proof}
Showing the closedness of $\mathcal{C}_r^{n,m}$ is sufficient.
Let $\{\mathcal{A}_k\}_k \subseteq \mathcal{C}_r^{n,m}$ and suppose that the sequence converges to some $\mathcal{A}_{\infty} \in \mathcal{S}^{n,m}$.
For each $k$, there exists $\mathcal{X}_k\in \mathcal{M}^{n,2(r+m)}$ such that $\mathcal{A}_k = \mathcal{C}^{(r)}(\mathcal{X}_k)$.
We note from \eqref{eq:C(X)_nno} that $\mathcal{C}^{(r)}(\mathcal{X}_k)$ is independent of $(\mathcal{X}_k)_{[\bm{\gamma}]}$ such that some elements of $\bm{\gamma} \in \mathbb{I}_{=2(r+m)}^n$ are odd.
Thus, by Lemma~\ref{lem:odd_zero}, we can assume that such $(\mathcal{X}_k)_{[\bm{\gamma}]}$ is 0 without loss of generality.
As $\mathcal{C}^{(r)}(\mathcal{X}_k) \to \mathcal{A}_{\infty}$ $(k\to\infty)$, we observe that
\begin{equation*}
\|\mathcal{C}^{(r)}(\mathcal{X}_k)\|_{\rm F} = \sqrt{
\sum_{i_1,\dots,i_m=1}^n\left(\sum_{\bm{\alpha}\in \mathbb{I}_{=r}^n}\frac{r!}{\bm{\alpha}!}(\mathcal{X}_k)_{[2\bm{\alpha} + 2\sum_{l=1}^m\bm{e}_{i_l}]}\right)^2} \to \|\mathcal{A}_{\infty}\|_{\rm F}.
\end{equation*}
Note that each $(\mathcal{X}_k)_{[2\bm{\alpha} + 2\sum_{l=1}^m\bm{e}_{i_l}]}$ is nonnegative because it is a diagonal element of the semidefinite matrix $\bm{M}^{n,r+m}(\mathcal{X}_k)$.
Therefore, $\{(\mathcal{X}_k)_{[2\bm{\alpha} + 2\sum_{l=1}^m\bm{e}_{i_l}]}\}_k$ is bounded for each $(i_1,\dots,i_m)\in \mathbb{N}_n^m$ and $\bm{\alpha}\in \mathbb{I}_{=r}^n$, and $\{\mathcal{X}_k\}_k$ is as well.
Thus, by taking a subsequence if necessary, we assume that the sequence $\{\mathcal{X}_k\}_k$ converges to some $\mathcal{X}_{\infty}$.
Given that $\{\mathcal{X}_k\}_k \subseteq \mathcal{M}^{n,2(r+m)}$ and $\mathcal{M}^{n,2(r+m)}$ is closed, we have $\mathcal{X}_{\infty} \in \mathcal{M}^{n,2(r+m)}$.
Therefore,
\begin{equation*}
\mathcal{A}_{\infty} = \lim_{k\to\infty}\mathcal{C}^{(r)}(\mathcal{X}_k) = \mathcal{C}^{(r)}(\mathcal{X}_{\infty}) \in \mathcal{C}_r^{n,m},
\end{equation*}
which implies that $\mathcal{C}_r^{n,m}$ is closed.
\end{proof}

\section{Approximation hierarchies exploiting those for the usual copositive cone}\label{sec:approx_COP_spectrum}
In this section, we provide other approximation hierarchies for the COP cone over a symmetric cone by exploiting those for the usual COP cone.
Let $(\mathbb{E},\circ,\bullet)$ be a Euclidean Jordan algebra of dimension $n$.
In addition, for $\mathcal{A}\in\mathcal{S}^{n,m}(\mathbb{E})$ and an ordered Jordan frame $(c_1,\dots,c_{\rk})\in\mathfrak{F}(\mathbb{E})$, let $\mathcal{A}(c_1,\dots,c_{\rk})\in\mathcal{S}^{\rk,m}$ be the tensor with the $(i_1,\dots,i_m)$th element $\langle\mathcal{A},c_{i_1}\otimes\cdots\otimes c_{i_m}\rangle$.
(The tensor is guaranteed to be symmetric by the symmetry of $\mathcal{A}$.)

The following lemma is key to providing the desired approximation hierarchies.

\begin{lemma}\label{lem:COP_spectrum}
\begin{equation}
\COP^{n,m}(\mathbb{E}_+) = \bigcap_{(c_1,\dots,c_{\rk})\in\mathfrak{F}(\mathbb{E})}\{\mathcal{A}\in\mathcal{S}^{n,m}(\mathbb{E})\mid\mathcal{A}(c_1,\dots,c_{\rk})\in\COP^{\rk,m}\}. \label{eq:COP_characterization}
\end{equation}
\end{lemma}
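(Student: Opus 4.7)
The plan is to prove both inclusions using the spectral decomposition in the Euclidean Jordan algebra $(\mathbb{E},\circ,\bullet)$, together with the multilinearity of the inner product on the tensor space. The key algebraic identity that I will rely on is: for any ordered Jordan frame $(c_1,\dots,c_{\rk})\in\mathfrak{F}(\mathbb{E})$ and any coefficients $y_1,\dots,y_{\rk}\in\mathbb{R}$, if $x=\sum_{i=1}^{\rk} y_i c_i$, then
\begin{equation*}
\langle \mathcal{A},x^{\otimes m}\rangle
=\sum_{i_1,\dots,i_m=1}^{\rk} y_{i_1}\cdots y_{i_m}\,\langle \mathcal{A},c_{i_1}\otimes\cdots\otimes c_{i_m}\rangle
=\langle \mathcal{A}(c_1,\dots,c_{\rk}),\bm{y}^{\otimes m}\rangle,
\end{equation*}
where $\bm{y}=(y_1,\dots,y_{\rk})\in\mathbb{R}^{\rk}$. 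This identity follows immediately from expanding $x^{\otimes m}$ by multilinearity and from the definition of $\mathcal{A}(c_1,\dots,c_{\rk})\in \mathcal{S}^{\rk,m}$.

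For the inclusion ``$\subseteq$'', I would take $\mathcal{A}\in\COP^{n,m}(\mathbb{E}_+)$ and fix an arbitrary ordered Jordan frame $(c_1,\dots,c_{\rk})\in\mathfrak{F}(\mathbb{E})$. For every $\bm{y}\in\mathbb{R}_+^{\rk}$, the element $x\coloneqq \sum_{i=1}^{\rk} y_i c_i$ lies in $\mathbb{E}_+$, because (as recalled in Sect.~\ref{subsec:EJA}) any nonnegative combination of a Jordan frame belongs to the associated symmetric cone. Applying the identity above and the copositivity of $\mathcal{A}$ gives $\langle \mathcal{A}(c_1,\dots,c_{\rk}),\bm{y}^{\otimes m}\rangle =\langle \mathcal{A},x^{\otimes m}\rangle\ge 0$. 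Since $\bm{y}\in\mathbb{R}_+^{\rk}$ was arbitrary, $\mathcal{A}(c_1,\dots,c_{\rk})\in\COP^{\rk,m}$.

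For the inclusion ``$\supseteq$'', I would take $\mathcal{A}$ in the right-hand side of \eqref{eq:COP_characterization} and an arbitrary $x\in\mathbb{E}_+$. By the spectral decomposition~\eqref{eq:spectral}, there exist $(c_1,\dots,c_{\rk})\in\mathfrak{F}(\mathbb{E})$ and nonnegative coefficients $x_1,\dots,x_{\rk}\ge 0$ (the nonnegativity follows from the characterization of $\mathbb{E}_+$ via its spectrum) such that $x=\sum_{i=1}^{\rk} x_i c_i$. Using the same identity in the reverse direction, together with $\mathcal{A}(c_1,\dots,c_{\rk})\in\COP^{\rk,m}$ and $(x_1,\dots,x_{\rk})\in\mathbb{R}_+^{\rk}$, yields $\langle \mathcal{A},x^{\otimes m}\rangle\ge 0$, so $\mathcal{A}\in\COP^{n,m}(\mathbb{E}_+)$.

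There is essentially no hard step: the only subtlety is ensuring that the ordered Jordan frame used in the spectral decomposition really lies in $\mathfrak{F}(\mathbb{E})$ (so that it is among the frames indexing the intersection), and that the coefficients in the decomposition of elements of $\mathbb{E}_+$ are nonnegative. Both facts are standard from the theory of Euclidean Jordan algebras as summarized in Sect.~\ref{subsec:EJA}, so the proof reduces to invoking them together with the multilinear expansion.
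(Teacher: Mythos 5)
Your proposal is correct and follows essentially the same route as the paper's proof: both directions hinge on the multilinear identity $\langle \mathcal{A},(\sum_i y_i c_i)^{\otimes m}\rangle=\langle\mathcal{A}(c_1,\dots,c_{\rk}),\bm{y}^{\otimes m}\rangle$, combined with the facts that nonnegative combinations of a Jordan frame lie in $\mathbb{E}_+$ and that every element of $\mathbb{E}_+$ admits a spectral decomposition with nonnegative coefficients. No gaps.
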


\begin{proof}
Let $\mathcal{A}\in\COP^{n,m}(\mathbb{E}_+)$.
For any $(c_1,\dots,c_{\rk})\in\mathfrak{F}(\mathbb{E})$ and $\bm{x}\in\mathbb{R}_+^{\rk}$, we have
\begin{align*}
\langle \mathcal{A}(c_1,\dots,c_{\rk}),\bm{x}^{\otimes m}\rangle &= \sum_{i_1,\dots,i_m=1}^{\rk}\langle\mathcal{A},c_{i_1}\otimes \cdots \otimes c_{i_m}\rangle x_{i_1}\cdots x_{i_m}\\
&= \left\langle \mathcal{A},\left(\sum_{i=1}^{\rk}x_ic_i\right)^{\otimes m}\right\rangle,
\end{align*}
which is nonnegative given that $\sum_{i=1}^{\rk}x_ic_i \in \mathbb{E}_+$.

Conversely, suppose that $\mathcal{A}$ belongs to the right-hand side set of \eqref{eq:COP_characterization}.
For any $x\in\mathbb{E}_+$, there exist $(x_1,\dots,x_{\rk})\in\mathbb{R}_+^{\rk}$ and $(c_1,\dots,c_{\rk})\in\mathfrak{F}(\mathbb{E})$ such that $x = \sum_{i=1}^{\rk}x_ic_i$.
As $\mathcal{A}(c_1,\dots,c_{\rk})\in\COP^{\rk,m}$, we can show $\langle \mathcal{A},x^{\otimes m}\rangle\ge 0$, i.e., $\mathcal{A}\in\COP^{n,m}(\mathbb{E}_+)$ in the same manner as the above discussion.
\end{proof}

The characterization of the COP cone over a symmetric cone is somewhat redundant because the Jordan frames we considered are ordered, and thus, the same set appears multiple times on the right-hand side of \eqref{eq:COP_characterization}.
To solve this problem, we provide a more concise characterization of $\COP^{n,m}(\mathbb{E}_+)$.

\begin{definition}\label{def:permutation_invariant}
For each $\sigma\in \mathfrak{S}_n$, let $\mathscr{P}_{\sigma}$ be the linear transformation on $\mathcal{S}^{n,m}$ defined by $(\mathscr{P}_{\sigma}\mathcal{A})_{i_1\cdots i_m} \coloneqq \mathcal{A}_{\sigma(i_1)\cdots \sigma(i_m)}$ for each $\mathcal{A}\in\mathcal{S}^{n,m}$ and $(i_1,\dots,i_m)\in\mathbb{N}_n^m$.
A set $\mathcal{K} \subseteq \mathcal{S}^{n,m}$ is said to be permutation-invariant if $\mathscr{P}_{\sigma}\mathcal{K} = \mathcal{K}$ for all $\sigma\in\mathfrak{S}_n$.
\end{definition}
Note that because $\mathscr{P}_{\sigma}$ is invertible and $\mathscr{P}_{\sigma}^{-1} = \mathscr{P}_{\sigma^{-1}}$ for all $\sigma \in \mathfrak{S}_n$, Definition~\ref{def:permutation_invariant} can be equivalently stated such that $\mathscr{P}_{\sigma}\mathcal{A} \in \mathcal{K}$ holds for all $\mathcal{A}\in\mathcal{K}$ and $\sigma\in\mathfrak{S}_n$.

\begin{lemma}\label{lem:COP_permutation_invariant}
$\COP^{n,m}$ is permutation-invariant.
\end{lemma}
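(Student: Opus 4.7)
The plan is to unwind the definitions and observe that a permutation of the coordinates of the tensor corresponds exactly to a permutation of the coordinates of the test vector, under which the nonnegative orthant is invariant. More concretely, fix $\mathcal{A}\in\COP^{n,m}$ and $\sigma\in\mathfrak{S}_n$, and pick an arbitrary $\bm{x}\in\mathbb{R}^n_+$. I want to verify $\langle \mathscr{P}_\sigma\mathcal{A},\bm{x}^{\otimes m}\rangle\ge 0$.

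First I would expand the pairing using the definition of $\mathscr{P}_\sigma$:
\begin{equation*}
\langle \mathscr{P}_\sigma \mathcal{A},\bm{x}^{\otimes m}\rangle = \sum_{i_1,\dots,i_m=1}^n \mathcal{A}_{\sigma(i_1)\cdots \sigma(i_m)}\, x_{i_1}\cdots x_{i_m}.
\end{equation*}
Then I perform the index change $j_k \coloneqq \sigma(i_k)$, so that $i_k = \sigma^{-1}(j_k)$. Because $\sigma$ is a bijection on $\{1,\dots,n\}$, summing over $(j_1,\dots,j_m)\in\mathbb{N}_n^m$ is the same as summing over $(i_1,\dots,i_m)\in\mathbb{N}_n^m$, yielding
\begin{equation*}
\langle \mathscr{P}_\sigma \mathcal{A},\bm{x}^{\otimes m}\rangle = \sum_{j_1,\dots,j_m=1}^n \mathcal{A}_{j_1\cdots j_m}\, x_{\sigma^{-1}(j_1)}\cdots x_{\sigma^{-1}(j_m)} = \langle \mathcal{A},\bm{y}^{\otimes m}\rangle,
\end{equation*}
where $\bm{y}\in\mathbb{R}^n$ is defined by $y_j \coloneqq x_{\sigma^{-1}(j)}$.

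Finally, since $\bm{x}\in\mathbb{R}^n_+$ and $\bm{y}$ is just a coordinate permutation of $\bm{x}$, we have $\bm{y}\in\mathbb{R}^n_+$ as well. The assumption $\mathcal{A}\in\COP^{n,m}$ then gives $\langle\mathcal{A},\bm{y}^{\otimes m}\rangle\ge 0$, hence $\langle \mathscr{P}_\sigma\mathcal{A},\bm{x}^{\otimes m}\rangle\ge 0$. As $\bm{x}\in\mathbb{R}^n_+$ was arbitrary, $\mathscr{P}_\sigma\mathcal{A}\in\COP^{n,m}$, which is exactly permutation invariance. There is no real obstacle here; the only subtlety worth flagging is the direction of the induced permutation on the vector (it is $\sigma^{-1}$, not $\sigma$), but because $\mathbb{R}^n_+$ is stable under the whole group $\mathfrak{S}_n$ this distinction does not affect the conclusion.
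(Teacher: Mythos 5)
Your proof is correct and follows essentially the same route as the paper: expand $\langle \mathscr{P}_\sigma\mathcal{A},\bm{x}^{\otimes m}\rangle$, reindex by $\sigma$ to obtain $\langle \mathcal{A},(x_{\sigma^{-1}(1)},\dots,x_{\sigma^{-1}(n)})^{\otimes m}\rangle$, and use the fact that $\mathbb{R}_+^n$ is invariant under coordinate permutations. The reduction to showing $\mathscr{P}_\sigma\mathcal{A}\in\COP^{n,m}$ for each $\sigma$ is also the paper's own observation following the definition of permutation-invariance, so nothing is missing.
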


\begin{proof}
Let $\mathcal{A}\in \COP^{n,m}$.
Then, for any $\sigma\in\mathfrak{S}_n$ and $\bm{x}\in\mathbb{R}_+^n$, we have
\begin{align*}
\langle \mathscr{P}_{\sigma}\mathcal{A},\bm{x}^{\otimes m}\rangle &= \sum_{i_1,\dots,i_m=1}^n\mathcal{A}_{\sigma(i_1)\cdots \sigma(i_m)}x_{i_1}\cdots x_{i_m}\\
&= \sum_{i_1,\dots,i_m=1}^n\mathcal{A}_{i_1\cdots i_m}x_{\sigma^{-1}(i_1)}\cdots x_{\sigma^{-1}(i_m)}\\
&= \langle \mathcal{A},(x_{\sigma^{-1}(1)},\dots,x_{{\sigma}^{-1}(n)})^{\otimes m}\rangle\\
&\ge 0,
\end{align*}
for which the last inequality follows from the fact that $(x_{\sigma^{-1}(1)},\dots,x_{{\sigma}^{-1}(n)}) \in \mathbb{R}_+^n$ if $\bm{x}\in\mathbb{R}_+^n$.
\end{proof}

\begin{lemma}\label{lem:frame_permutation_invariant}
Let $\mathcal{K} \subseteq \mathcal{S}^{\rk,m}$ be a permutation-invariant set and let $\mathcal{A}\in\mathcal{S}^{n,m}(\mathbb{E})$.
If $\mathcal{A}(c_1,\dots,c_{\rk})\in\mathcal{K}$ for a given $(c_1,\dots,c_{\rk})\in\mathfrak{F}(\mathbb{E})$, then $\mathcal{A}(c_{\sigma(1)},\dots,c_{\sigma(\rk)})\in \mathcal{K}$ for all $\sigma\in\mathfrak{S}_{\rk}$.
\end{lemma}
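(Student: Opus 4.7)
The plan is to reduce the claim to the identity
\begin{equation*}
\mathcal{A}(c_{\sigma(1)},\dots,c_{\sigma(\rk)}) = \mathscr{P}_{\sigma}\,\mathcal{A}(c_1,\dots,c_{\rk})
\end{equation*}
for every $\sigma \in \mathfrak{S}_{\rk}$, after which permutation-invariance of $\mathcal{K}$ finishes the proof in one line.

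First I would unpack the definition of the tensor on the left-hand side: by the convention introduced just before Lemma~\ref{lem:COP_spectrum}, its $(i_1,\dots,i_m)$th component equals $\langle \mathcal{A}, c_{\sigma(i_1)}\otimes\cdots\otimes c_{\sigma(i_m)}\rangle$. But this is precisely the $(\sigma(i_1),\dots,\sigma(i_m))$th component of $\mathcal{A}(c_1,\dots,c_{\rk})$, which by Definition~\ref{def:permutation_invariant} is the $(i_1,\dots,i_m)$th component of $\mathscr{P}_{\sigma}\,\mathcal{A}(c_1,\dots,c_{\rk})$. Checking this entrywise for all $(i_1,\dots,i_m)\in \mathbb{N}_{\rk}^m$ establishes the displayed identity.

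Once that identity is in hand, the conclusion is immediate: by hypothesis $\mathcal{A}(c_1,\dots,c_{\rk})\in\mathcal{K}$, and since $\mathcal{K}$ is permutation-invariant we have $\mathscr{P}_{\sigma}\,\mathcal{A}(c_1,\dots,c_{\rk})\in\mathcal{K}$ for every $\sigma\in\mathfrak{S}_{\rk}$, which via the identity gives $\mathcal{A}(c_{\sigma(1)},\dots,c_{\sigma(\rk)})\in\mathcal{K}$.

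There is essentially no obstacle here; the only point that merits care is that the symbol $\sigma$ plays two roles (reindexing the frame on the left, and the permutation applied via $\mathscr{P}_{\sigma}$ on the right), so I would state the entrywise computation explicitly to make clear that both roles correspond to the same permutation and no inversion is needed.
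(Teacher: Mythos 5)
Your proof is correct and follows essentially the same route as the paper: both establish the entrywise identity $\mathcal{A}(c_{\sigma(1)},\dots,c_{\sigma(\rk)}) = \mathscr{P}_{\sigma}\mathcal{A}(c_1,\dots,c_{\rk})$ and then invoke the permutation-invariance of $\mathcal{K}$. Your remark that no inversion of $\sigma$ is needed is accurate and matches the paper's computation.
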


\begin{proof}
Because $\mathcal{K}$ is permutation-invariant, we have $\mathscr{P}_{\sigma}\mathcal{A}(c_1,\dots,c_{\rk}) \in \mathcal{K}$.
Then, the $(i_1,\dots,i_m)$th element of $\mathscr{P}_{\sigma}\mathcal{A}(c_1,\dots,c_{\rk})$ is
\begin{equation*}
\mathcal{A}(c_1,\dots,c_{\rk})_{\sigma(i_1)\cdots \sigma(i_m)} = \langle \mathcal{A},c_{\sigma(i_1)}\otimes \cdots\otimes c_{\sigma(i_m)}\rangle,
\end{equation*}
which equals the $(i_1,\dots,i_m)$th element of $\mathcal{A}(c_{\sigma(1)},\dots,c_{\sigma(\rk)})$.
Therefore, we have
\begin{equation*}
\mathcal{A}(c_{\sigma(1)},\dots,c_{\sigma(\rk)}) = \mathscr{P}_{\sigma}\mathcal{A}(c_1,\dots,c_m) \in \mathcal{K}.
\end{equation*}
\end{proof}

The symmetric group $\mathfrak{S}_{\rk}$ acts on the set $\mathfrak{F}(\mathbb{E})$ by $\sigma \cdot (c_1,\dots,c_{\rk}) = (c_{\sigma(1)},\dots,c_{\sigma(\rk)})$.
Let $\mathfrak{F}_{\rm c}(\mathbb{E})$ be a complete set of representatives of $\mathfrak{S}_{\rk}$-orbits in $\mathfrak{F}(\mathbb{E})$.
Then, using Lemmas~\ref{lem:COP_permutation_invariant} and~\ref{lem:frame_permutation_invariant}, we obtain the following, more concise, characterization of $\COP^{n,m}(\mathbb{E}_+)$, compared with that using Lemma~\ref{lem:COP_spectrum}.

\begin{theorem}\label{thm:permutation_invariant}
Let $\mathcal{K} \subseteq \mathcal{S}^{\rk,m}$ be a permutation-invariant set and consider $\mathfrak{F}_{\rm c}(\mathbb{E}) \subseteq \mathfrak{F} \subseteq \mathfrak{F}(\mathbb{E})$.
Then, the set
\begin{equation*}
\bigcap_{(c_1,\dots,c_{\rk})\in\mathfrak{F}}\{\mathcal{A}\in\mathcal{S}^{n,m}(\mathbb{E})\mid\mathcal{A}(c_1,\dots,c_{\rk})\in\mathcal{K}\}
\end{equation*}
is the same regardless of the choice of $\mathfrak{F}$.
In particular, the claim holds when $\mathcal{K} = \COP^{\rk,m}$, and it follows that
\begin{equation}
\COP^{n,m}(\mathbb{E}_+) = \bigcap_{(c_1,\dots,c_{\rk})\in\mathfrak{F}}\{\mathcal{A}\in\mathcal{S}^{n,m}(\mathbb{E})\mid\mathcal{A}(c_1,\dots,c_{\rk})\in\COP^{\rk,m}\}. \label{eq:COP_characterization_concise}
\end{equation}
\end{theorem}

The idea for providing inner- and outer-approximation hierarchies for $\COP^{n,m}(\mathbb{E}_+)$ is to approximate $\COP^{\rk,m}$ on the right-hand side set in \eqref{eq:COP_characterization_concise} from the inside and outside.

\subsection{Inner-approximation hierarchy}\label{subsec:inner_approx}
Throughout this subsection, we only consider the case of $m = 2$.
Generally, even if $\{\mathcal{I}_r^{\rk}\}_r$ is an inner-approximation hierarchy for $\COP^{\rk,2}$, i.e., $\mathcal{I}_r^{\rk}\uparrow \COP^{\rk,2}$, the sequence obtained by replacing $\COP^{\rk,2}$ on the right-hand side set in \eqref{eq:COP_characterization_concise} with $\mathcal{I}_r^{\rk}$ is not guaranteed to converge to $\COP^{n,2}(\mathbb{E}_+)$.
However, if we choose the polyhedral inner-approximation hierarchy provided by de Klerk and Pasechnik~\cite{dP2002} as $\{\mathcal{I}_r^{\rk}\}_r$, the induced sequence is indeed an inner-approximation hierarchy for $\COP^{n,2}(\mathbb{E}_+)$.
The hierarchy $\{\mathcal{I}_{{\rm dP},r}^{\rk}\}_r$ provided by de Klerk and Pasechnik~\cite{dP2002} is defined as
\begin{equation*}
\mathcal{I}_{{\rm dP},r}^{\rk} \coloneqq \{\bm{A}\in\mathbb{S}^{\rk} \mid (\bm{x}^\top\bm{1})^r\bm{x}^\top\bm{A}\bm{x} \text{ has only nonnegative coefficients}\}
\end{equation*}
and satisfies $\mathcal{I}_{{\rm dP},r}^{\rk} \uparrow \COP^{\rk,2}$.

The following theorem plays an important role in proving that the sequence induced by $\{\mathcal{I}_{{\rm dP},r}^{\rk}\}_r$ converges to $\COP^{n,2}(\mathbb{E}_+)$.

\begin{theorem}[{\cite[Corollary~3.5]{dP2002}}; also see {\cite[Theorem~1]{PR2001}}] \label{thm:upper_bound}
Let $\bm{A} \in \setint(\COP^{\rk,2})$, and set $L \coloneqq \max_{1\le i,j\le \rk}|A_{ij}|$ and $\lambda \coloneqq \min_{\bm{x}\in \Delta_=^{\rk-1}}\bm{x}^\top\bm{A}\bm{x} > 0$.
If $r\in\mathbb{N}$ satisfies $r > L/\lambda -2$, then $\bm{A} \in \mathcal{I}_{{\rm dP},r}^{\rk}$.
\end{theorem}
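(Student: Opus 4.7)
\emph{Proof proposal.} The plan is to expand $(\bm{x}^\top\bm{1})^r\bm{x}^\top\bm{A}\bm{x}$ in the monomial basis, derive a clean closed form for each coefficient, and then use the strict copositivity bound on the standard simplex to show that each coefficient is nonnegative whenever $r > L/\lambda - 2$.

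First I would fix $\bm{\beta}\in\mathbb{I}^{\rk}_{=r+2}$ and compute the coefficient $c_{\bm{\beta}}$ of $\bm{x}^{\bm{\beta}}$ in $(\bm{x}^\top\bm{1})^r\bm{x}^\top\bm{A}\bm{x}$. Using the multinomial expansion $(\bm{x}^\top\bm{1})^r=\sum_{|\bm{\alpha}|=r}\frac{r!}{\bm{\alpha}!}\bm{x}^{\bm{\alpha}}$ and multiplying termwise by $A_{ij}x_ix_j$, one collects contributions indexed by pairs $(i,j)$ with $\bm{\beta}-\bm{e}_i-\bm{e}_j\ge \bm{0}$. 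Splitting the $i=j$ and $i\neq j$ cases and identifying $\bm{\beta}!/(\bm{\beta}-\bm{e}_i-\bm{e}_j)!$ with $\beta_i\beta_j$ or $\beta_i(\beta_i-1)$ respectively, the boundary conditions disappear and one obtains
\[
c_{\bm{\beta}} \;=\; \frac{r!}{\bm{\beta}!}\left(\sum_{i,j=1}^{\rk} A_{ij}\beta_i\beta_j \;-\; \sum_{i=1}^{\rk} A_{ii}\beta_i\right).
\]

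Second I would evaluate $\bm{x}^\top\bm{A}\bm{x}$ at the simplex point $\bm{x}^{\star}\coloneqq \bm{\beta}/(r+2)\in\Delta_=^{\rk-1}$. By the definition of $\lambda$,
\[
\sum_{i,j=1}^{\rk} A_{ij}\beta_i\beta_j \;=\; (r+2)^2\,(\bm{x}^{\star})^\top\bm{A}\bm{x}^{\star} \;\ge\; \lambda(r+2)^2,
\]
while $\bigl|\sum_{i=1}^{\rk} A_{ii}\beta_i\bigr|\le L\sum_i\beta_i = L(r+2)$. Substituting into the expression for $c_{\bm{\beta}}$ yields
\[
c_{\bm{\beta}} \;\ge\; \frac{r!}{\bm{\beta}!}(r+2)\bigl(\lambda(r+2)-L\bigr),
\]
which is nonnegative as soon as $r+2\ge L/\lambda$, i.e.\ $r > L/\lambda - 2$ (using $r\in\mathbb{N}$). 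Since $\bm{\beta}$ was arbitrary, every coefficient of $(\bm{x}^\top\bm{1})^r\bm{x}^\top\bm{A}\bm{x}$ is nonnegative, which is exactly the membership $\bm{A}\in \mathcal{I}_{{\rm dP},r}^{\rk}$.

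I do not expect a serious obstacle. The only delicate point is the careful bookkeeping in the first step, where one must correctly combine the $i\ne j$ sums with the $i=j$ sums and verify that the missing boundary terms collapse into the single correction $-\sum_i A_{ii}\beta_i$. Once that identity is in hand, the rest is a direct application of the definitions of $L$ and $\lambda$, whose precise form was tailored so that the $O((r+2)^2)$ gain from strict copositivity dominates the $O(r+2)$ diagonal correction exactly at the threshold $r+2=L/\lambda$.
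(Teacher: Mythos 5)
The paper gives no proof of this theorem at all --- it is imported verbatim from de~Klerk--Pasechnik (Corollary~3.5) and Powers--Reznick (Theorem~1) --- and your argument is a correct, self-contained reconstruction of exactly the coefficient computation underlying those citations: the identity $c_{\bm{\beta}}=\frac{r!}{\bm{\beta}!}\bigl(\bm{\beta}^\top\bm{A}\bm{\beta}-\sum_{i}A_{ii}\beta_i\bigr)$ is right, and the two estimates via $\lambda$ and $L$ give precisely the threshold $\lambda(r+2)\ge L$. The only cosmetic slip is the parenthetical ``using $r\in\mathbb{N}$'': integrality is not needed, since the hypothesis $r>L/\lambda-2$ already gives $r+2>L/\lambda$ directly.
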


\begin{proposition}\label{prop:inner_approx}
We fix a set $\mathfrak{F}$ with $\mathfrak{F}_{\rm c}(\mathbb{E}) \subseteq \mathfrak{F} \subseteq \mathfrak{F}(\mathbb{E})$.
Let
\begin{equation}
\mathcal{I}_{{\rm dP},r}^{n}(\mathbb{E}_+) \coloneqq \bigcap_{(c_1,\dots,c_{\rk})\in\mathfrak{F}}\{\mathcal{A}\in\mathcal{S}^{n,2}(\mathbb{E}) \mid \mathcal{A}(c_1,\dots,c_{\rk})\in\mathcal{I}_{{\rm dP},r}^{\rk}\}. \label{eq:inner_approx}
\end{equation}
Then, each $\mathcal{I}_{{\rm dP},r}^{n}(\mathbb{E}_+)$ is a closed convex cone and $\mathcal{I}_{{\rm dP},r}^{n}(\mathbb{E}_+) \uparrow \COP^{n,2}(\mathbb{E}_+)$.
\end{proposition}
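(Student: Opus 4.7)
The plan is to prove in turn: (i) closedness and convexity of $\mathcal{I}_{{\rm dP},r}^{n}(\mathbb{E}_+)$, (ii) monotonicity of the chain together with its inclusion in $\COP^{n,2}(\mathbb{E}_+)$, and (iii) the covering of $\setint\COP^{n,2}(\mathbb{E}_+)$ by $\bigcup_r \mathcal{I}_{{\rm dP},r}^{n}(\mathbb{E}_+)$. The first two parts are direct from the construction, and the real work lies in (iii), which I will handle by applying Theorem~\ref{thm:upper_bound} with an exponent $r$ chosen \emph{uniformly} over the compact set $\mathfrak{F}(\mathbb{E})$.

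For (i) and (ii): each $\mathcal{I}_{{\rm dP},r}^{\rk}$ is a polyhedral (hence closed convex) cone, and the map $\mathcal{A}\mapsto\mathcal{A}(c_1,\dots,c_{\rk})$ is linear and continuous, so its preimage is a closed convex cone in $\mathcal{S}^{n,2}(\mathbb{E})$, and intersecting such preimages over $\mathfrak{F}$ preserves these properties. The chain $\mathcal{I}_{{\rm dP},r}^{\rk}\subseteq\mathcal{I}_{{\rm dP},r+1}^{\rk}$, obtained by multiplying the witness polynomial by $\bm{x}^\top\bm{1}$, transfers through~\eqref{eq:inner_approx}, while $\mathcal{I}_{{\rm dP},r}^{\rk}\subseteq\COP^{\rk,2}$ combined with the characterization~\eqref{eq:COP_characterization_concise} yields $\mathcal{I}_{{\rm dP},r}^{n}(\mathbb{E}_+)\subseteq\COP^{n,2}(\mathbb{E}_+)$.

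For (iii), fix $\mathcal{A}\in\setint\COP^{n,2}(\mathbb{E}_+)$; we seek a single $r_0$ that works for every $(c_1,\dots,c_{\rk})\in\mathfrak{F}$. Interiority yields some $\delta>0$ with $\langle\mathcal{A},x\otimes x\rangle\ge\delta\,(x\bullet x)$ for all $x\in\mathbb{E}_+$ (cf.\ \cite[Observation~1]{ZVP2006}, using compactness of $\mathbb{E}_+\cap\{x:x\bullet x=1\}$). For $\bm{x}\in\Delta_=^{\rk-1}$ and $y\coloneqq\sum_i x_ic_i\in\mathbb{E}_+$ one then has
\[
\bm{x}^\top\mathcal{A}(c_1,\dots,c_{\rk})\bm{x}=\langle\mathcal{A},y\otimes y\rangle\ge\delta\,(y\bullet y)=\delta\sum_i x_i^2(c_i\bullet c_i),
\]
where the last equality uses $c_i\bullet c_j=(c_i\circ c_j)\bullet c_j=0$ for $i\neq j$, a direct consequence of (J3) and of the orthogonality of primitive idempotents in a Jordan frame. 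Since every primitive idempotent is nonzero and $\mathfrak{F}(\mathbb{E})$ is compact, $\min_i(c_i\bullet c_i)$ admits a positive lower bound $\mu_0$ uniformly in the frame; combined with $\sum_i x_i^2\ge 1/\rk$, this gives $\lambda(c_1,\dots,c_{\rk})\ge\delta\mu_0/\rk\eqqcolon\lambda_0>0$. In particular $\mathcal{A}(c_1,\dots,c_{\rk})\in\setint\COP^{\rk,2}$, so Theorem~\ref{thm:upper_bound} applies. Its other ingredient, $L(c_1,\dots,c_{\rk})=\max_{i,j}|\langle\mathcal{A},c_i\otimes c_j\rangle|$, is continuous in the frame and hence bounded above by some $L_0$ on the compact $\mathfrak{F}(\mathbb{E})$. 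Choosing any $r_0\in\mathbb{N}$ with $r_0>L_0/\lambda_0-2$ makes Theorem~\ref{thm:upper_bound} applicable simultaneously for every $(c_1,\dots,c_{\rk})\in\mathfrak{F}$, yielding $\mathcal{A}\in\mathcal{I}_{{\rm dP},r_0}^{n}(\mathbb{E}_+)$.

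The main obstacle I anticipate is the uniform positive lower bound on $\lambda$. It rests on three ingredients working together: the compactness of $\mathfrak{F}(\mathbb{E})$, the $\bullet$-orthogonality of primitive idempotents forced by associativity (which eliminates cross terms and turns $y\bullet y$ into a controllable diagonal sum), and the strict positivity of $\mathcal{A}$ on $\mathbb{E}_+\setminus\{0\}$ coming from interiority. Once those are in hand, everything else is essentially bookkeeping.
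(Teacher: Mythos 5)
Your proposal is correct and follows essentially the same route as the paper: closedness/convexity and monotonicity are read off from the construction and Proposition~\ref{prop:permutation_invariant}, and the convergence step applies Theorem~\ref{thm:upper_bound} with a single exponent $r_0$ chosen uniformly via compactness of $\mathfrak{F}(\mathbb{E})$. The only difference is that you spell out \emph{why} $\inf_{\mathfrak{F}(\mathbb{E})}\lambda>0$ (through the bound $\langle\mathcal{A},x^{\otimes 2}\rangle\ge\delta\,(x\bullet x)$ and the $\bullet$-orthogonality of the frame), a point the paper asserts directly from compactness; this is a valid elaboration of the same argument, not a different one.
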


\begin{proof}
Given that each $\mathcal{I}_{{\rm dP},r}^{\rk}$ is a closed convex cone and that the mapping $\mathcal{A}\mapsto \langle\mathcal{A},c_i\otimes c_j\rangle$ is linear for each $(c_1,\dots,c_{\rk})\in\mathfrak{F}$ and $i,j=1,\dots,\rk$, $\mathcal{I}_{{\rm dP},r}^{n}(\mathbb{E}_+)$ is a closed convex cone.
In the following, we prove $\mathcal{I}_{{\rm dP},r}^{n}(\mathbb{E}_+) \uparrow \COP^{n,2}(\mathbb{E}_+)$.
As the sequence $\{\mathcal{I}_{{\rm dP},r}^{\rk}\}_r$ satisfies $\mathcal{I}_{{\rm dP},r}^{\rk} \subseteq \mathcal{I}_{{\rm dP},r+1}^{\rk} \subseteq \COP^{\rk,2}$ for all $r\in\mathbb{N}$, it follows from Theorem~\ref{thm:permutation_invariant} that the sequence $\{\mathcal{I}_{{\rm dP},r}^{n}(\mathbb{E}_+)\}_r$ satisfies $\mathcal{I}_{{\rm dP},r}^{n}(\mathbb{E}_+) \subseteq \mathcal{I}_{{\rm dP},r+1}^{n}(\mathbb{E}_+) \subseteq \COP^{n,2}(\mathbb{E}_+)$ for all $r\in\mathbb{N}$.
Next, let $\mathcal{A}\in\setint\COP^{n,2}(\mathbb{E}_+)$.
Using $\mathcal{A}$, we define
\begin{align*}
L(\mathcal{A};c_1,\dots,c_{\rk}) &\coloneqq \max_{i\le i,j\le \rk}|\langle\mathcal{A},c_i\otimes c_j\rangle|,\\
\lambda(\mathcal{A};c_1,\dots,c_{\rk}) &\coloneqq \min_{\bm{x}\in \Delta_=^{\rk-1}}\bm{x}^\top\mathcal{A}(c_1,\dots,c_{\rk})\bm{x}
\end{align*}
for each $(c_1,\dots,c_{\rk})\in\mathfrak{F}(\mathbb{E})$, and also define
\begin{align*}
L(\mathcal{A};\widetilde{\mathfrak{F}}) &\coloneqq \sup_{(c_1,\dots,c_{\rk})\in\widetilde{\mathfrak{F}}}L(\mathcal{A};c_1,\dots,c_{\rk}),\\
\lambda(\mathcal{A};\widetilde{\mathfrak{F}}) &\coloneqq \inf_{(c_1,\dots,c_{\rk})\in\widetilde{\mathfrak{F}}}\lambda(\mathcal{A};c_1,\dots,c_{\rk})
\end{align*}
for $\widetilde{\mathfrak{F}} \in \{\mathfrak{F},\mathfrak{F}(\mathbb{E})\}$.
Given that $\mathcal{A}\in\setint\COP^{n,2}(\mathbb{E}_+)$ and that the sets $\Delta_=^{\rk-1}$ and $\mathfrak{F}(\mathbb{E})$ are compact, we have $L(\mathcal{A};\mathfrak{F}(\mathbb{E})) < +\infty$ and $\lambda(\mathcal{A};\mathfrak{F}(\mathbb{E})) > 0$.
Because $L(\mathcal{A};\mathfrak{F}) \le L(\mathcal{A};\mathfrak{F}(\mathbb{E}))$ and $\lambda(\mathcal{A};\mathfrak{F}) \ge \lambda(\mathcal{A};\mathfrak{F}(\mathbb{E}))$, we obtain $L(\mathcal{A};\mathfrak{F}) < +\infty$ and $\lambda(\mathcal{A};\mathfrak{F}) > 0$.
Now, let $r_0 \coloneqq \lceil L(\mathcal{A};\mathfrak{F})/\lambda(\mathcal{A};\mathfrak{F})\rceil \in \mathbb{N}$ and fix $(c_1,\dots,c_{\rk})\in\mathfrak{F}$ arbitrarily.
Then, because
\begin{equation*}
r_0 > \frac{L(\mathcal{A};\mathfrak{F})}{\lambda(\mathcal{A};\mathfrak{F})} - 2 \ge \frac{L(\mathcal{A};c_1,\dots,c_{\rk})}{\lambda(\mathcal{A};c_1,\dots,c_{\rk})} - 2,
\end{equation*}
Theorem~\ref{thm:upper_bound} implies that $\mathcal{A}(c_1,\dots,c_{\rk}) \in \mathcal{I}_{{\rm dP},r_0}^{\rk}$.
Because $r_0$ is independent of the choice of $(c_1,\dots,c_{\rk})\in\mathfrak{F}$, we obtain $\mathcal{A} \in \mathcal{I}_{{\rm dP},r_0}^n(\mathbb{E}_+) \subseteq \bigcup_{r=0}^{\infty}\mathcal{I}_{{\rm dP},r}^n(\mathbb{E}_+)$.
\end{proof}

\begin{remark}
As it can be seen from the proof of Proposition~\ref{prop:inner_approx}, if a non-decreasing sequence $\{\mathcal{I}_r^{\rk}\}_r$ satisfies $\mathcal{I}_{{\rm dP},r}^{\rk} \subseteq \mathcal{I}_r^{\rk} \subseteq \COP^{\rk,2}$ for all $r \in \mathbb{N}$, the sequence obtained by replacing $\COP^{\rk,2}$ on the right-hand side set in \eqref{eq:COP_characterization_concise} with $\mathcal{I}_r^{\rk}$ is also an inner-approximation hierarchy for $\COP^{n,2}(\mathbb{E}_+)$.
In addition, Proposition~\ref{prop:inner_approx} can be extended to the case of general $m$ by using the polyhedral inner-approximation hierarchy provided by Iqbal and Ahmed~\cite{IA2022}, which is a generalization of that provided by de Klerk and Pasechnik~\cite{dP2002}, for $\COP^{\rk,m}$.
\end{remark}

Note that $\mathcal{I}_{{\rm dP},r}^{n}(\mathbb{E}_+)$ is defined as the intersection of the \textit{infinitely} many sets in general even if $m$, the order of tensors, is limited to 2.
This means that the inner-approximation hierarchy induces a semi-infinite conic constraint.
Initially, the tractability of the approximation hierarchy seems to be the same as the COP cone because $\COP^{n,2}(\mathbb{E}_+)$ can also be described by a semi-infinite constraint.
However, when the symmetric cone $\mathbb{E}_+$ is the direct product of a nonnegative orthant and \textit{one} second-order cone, each $\mathcal{I}_{{\rm dP},r}^{n}(\mathbb{E}_+)$ can be represented by \textit{finitely} many semidefinite constraints.

\subsubsection{Full expression of $\mathcal{I}_{{\rm dP},r}^{n}(\mathbb{E}_+)$}\label{subsubsec:full_expression_dP}
Let $(\mathbb{E}_1,\circ_1,\bullet_1)$ and $(\mathbb{E}_2,\circ_2,\bullet_2)$ be the Euclidean Jordan algebras associated with the nonnegative orthant $\mathbb{R}_+^{n_1}$ and second-order cone $\mathbb{L}^{n_2}$ shown in Examples~\ref{ex:nno} and~\ref{ex:soc}, respectively.
Then, $\mathbb{E} \coloneqq \mathbb{E}_1\times \mathbb{E}_2 = \mathbb{R}^{n_1 + n_2}$ is the Euclidean Jordan algebra with the induced symmetric cone $\mathbb{R}_+^{n_1} \times \mathbb{L}^{n_2}$ and rank $\rk = n_1 + 2$.
We set $n \coloneqq n_1 + n_2$ and reindex $(1,\dots,\rk)$ as $(11,\dots,1n_1,21,22)$, i.e., $1i \coloneqq i$ for $i = 1,\dots,n_1$ and $2i \coloneqq n_1 + i$ for $i = 1,2$.
Sect.~\ref{subsec:outer_approx} uses this notation.
In addition,
\begin{equation}
\mathfrak{F} = \left\{\left(\begin{pmatrix}
\bm{e}_{11}\\
\bm{0}_{n_2}
\end{pmatrix},\dots,\begin{pmatrix}
\bm{e}_{1n_1}\\
\bm{0}_{n_2}
\end{pmatrix},\begin{pmatrix}
\bm{0}_{n_1}\\
1/2\\
\bm{v}/2
\end{pmatrix},\begin{pmatrix}
\bm{0}_{n_1}\\
1/2\\
-\bm{v}/2
\end{pmatrix}\right) \relmiddle| \bm{v}\in S^{n_2-2}\right\} \label{eq:F}
\end{equation}
is a set satisfying $\mathfrak{F}_{\rm c}(\mathbb{E}) \subseteq \mathfrak{F} \subseteq \mathfrak{F}(\mathbb{E})$ (see Example~\ref{ex:nno}, Example~\ref{ex:soc}, and Proposition~\ref{prop:primitive_idempotent}).

Under the identification between $\mathcal{S}^{n,2}$ and $\mathbb{S}^n$, the inner-approximation hierarchy \eqref{eq:inner_approx} with the set \eqref{eq:F} is
\begin{equation*}
\mathcal{I}_{{\rm dP},r}^{n}(\mathbb{E}_+) = \bigcap_{\bm{v}\in S^{n_2-2}}\left\{\bm{A}\in\mathbb{S}^n \relmiddle|
f_r(\bm{x};\bm{A},\bm{v}) \text{ has only nonnegative coefficients}\right\},
\end{equation*}
where
\begin{align}
f(\bm{x};\bm{A},\bm{v}) &\coloneqq \begin{pmatrix}
2\sum_{i=1}^{n_1}x_{1i}\bm{e}_{1i}\\
x_{21} + x_{22}\\
(x_{21} - x_{22})\bm{v}
\end{pmatrix}^\top \bm{A}\begin{pmatrix}
2\sum_{i=1}^{n_1}x_{1i}\bm{e}_{1i}\\
x_{21} + x_{22}\\
(x_{21} - x_{22})\bm{v}
\end{pmatrix}, \label{eq:f}\\
f_r(\bm{x};\bm{A},\bm{v}) &\coloneqq (\bm{x}^\top\bm{1})^rf(\bm{x};\bm{A},\bm{v}).\nonumber
\end{align}
Note that $f(\bm{x};\bm{A},\bm{v})$ is doubled for the convenience of the following calculation; however, the set $\mathcal{I}_{{\rm dP},r}^{n}(\mathbb{E}_+)$ is unchanged.
Let $\bm{A}\in\mathbb{S}^n$ be partitioned as follows:
\begin{equation*}
\bm{A} = \begin{pmatrix}
\bm{A}^{(11)} & \bm{A}^{(121)} & (\bm{A}^{(122)})^\top\\
(\bm{A}^{(121)})^\top & A^{(2121)} & (\bm{A}^{(2122)})^\top\\
\bm{A}^{(122)} & \bm{A}^{(2122)} & \bm{A}^{(2222)}
\end{pmatrix}
\end{equation*}
with $\bm{A}^{(11)}\in\mathbb{S}^{n_1}$, $\bm{A}^{(121)}\in\mathbb{R}^{n_1}$, $\bm{A}^{(122)}\in\mathbb{R}^{(n_2-1)\times n_1}$, $A^{(2121)}\in\mathbb{R}$, $\bm{A}^{(2122)}\in\mathbb{R}^{n_2-1}$, and $\bm{A}^{(2222)}\in\mathbb{S}^{n_2-1}$.
In addition, for such $\bm{A}\in\mathbb{S}^n$ and
\begin{equation*}
\bm{\alpha} = (\underbrace{\alpha_{11},\dots,\alpha_{1n_1}}_{\coloneqq \bm{\alpha}_1},\alpha_{21},\alpha_{22})\in \mathbb{N}^{\rk},
\end{equation*}
we let
\begin{align*}
M^{(11)}(\bm{A},\bm{\alpha}) &\coloneqq 4\{\bm{\alpha}_1^\top\bm{A}^{(11)}\bm{\alpha}_1-\bm{\alpha}_1^\top\diag(\bm{A}^{(11)})\} + 4(\alpha_{21} + \alpha_{22})\bm{\alpha}_1^\top \bm{A}^{(121)} \\
&\quad+ (\alpha_{21}+\alpha_{22})(\alpha_{21}+\alpha_{22}-1)A^{(2121)},\\
\bm{M}^{(21)}(\bm{A},\bm{\alpha}) &\coloneqq 2(\alpha_{21}-\alpha_{22})\bm{A}^{(122)}\bm{\alpha}_1 + (\alpha_{21}-\alpha_{22})(\alpha_{21} + \alpha_{22}-1)\bm{A}^{(2122)}, \\
\bm{M}^{(22)}(\bm{A},\bm{\alpha}) &\coloneqq \{(\alpha_{21} - \alpha_{22})^2 - (\alpha_{21}+\alpha_{22})\}\bm{A}^{(2222)}
\end{align*}
and define
\begin{equation*}
\bm{M}(\bm{A},\bm{\alpha}) \coloneqq \begin{pmatrix}
M^{(11)}(\bm{A},\bm{\alpha}) & \bm{M}^{(21)}(\bm{A},\bm{\alpha})^\top\\
\bm{M}^{(21)}(\bm{A},\bm{\alpha}) & \bm{M}^{(22)}(\bm{A},\bm{\alpha})
\end{pmatrix} \in \mathbb{S}^{n_2},
\end{equation*}
where $\diag(\bm{A}^{(11)})\in\mathbb{R}^{n_1}$ is the vector of the diagonal elements of $\bm{A}^{(11)}$.
Then, after some calculations, we have
\begin{equation*}
f_r(\bm{x};\bm{A},\bm{v}) = \sum_{\bm{\alpha}\in \mathbb{I}_{=r+2}^{\rk}}\frac{r!}{\bm{\alpha}!}\begin{pmatrix}
1\\
\bm{v}
\end{pmatrix}^\top\bm{M}(\bm{A},\bm{\alpha})\begin{pmatrix}
1\\
\bm{v}
\end{pmatrix}\bm{x}^{\bm{\alpha}}. \label{eq:fr_calc}
\end{equation*}
Therefore,
\begin{align}
\mathcal{I}_{{\rm dP},r}^{n}(\mathbb{E}_+) &= \bigcap_{\bm{\alpha}\in \mathbb{I}_{=r+2}^{\rk}} \bigcap_{\bm{v}\in S^{n_2-2}}\left\{\bm{A}\in\mathbb{S}^n \relmiddle| \begin{pmatrix}
1\\
\bm{v}
\end{pmatrix}^\top\bm{M}(\bm{A},\bm{\alpha})\begin{pmatrix}
1\\
\bm{v}
\end{pmatrix} \ge 0\right\} \nonumber\\
&= \bigcap_{\bm{\alpha}\in \mathbb{I}_{=r+2}^{\rk}}\{\bm{A}\in\mathbb{S}^n \mid \bm{M}(\bm{A},\bm{\alpha}) \in \COP(\partial\mathbb{L}^{n_2})\} \nonumber\\
&= \bigcap_{\bm{\alpha}\in \mathbb{I}^{\rk}_{=r+2}}\left\{\bm{A}\in\mathbb{S}^n \relmiddle|
\begin{aligned}
&\text{There exists $t_{\bm{\alpha}}\in\mathbb{R}$ such that}\\
&\bm{M}(\bm{A},\bm{\alpha}) - t_{\bm{\alpha}}\begin{pmatrix}
1 & \bm{0}\\
\bm{0} & -\bm{I}_{n_2-1}
\end{pmatrix} \in \mathbb{S}_+^{n_2}
\end{aligned}
\right\}, \label{eq:characterization_C}
\end{align}
where the last equation follows from \cite[Corollary 6]{SZ2003}.
In summary, $\mathcal{I}_{{\rm dP},r}^{n}(\mathbb{E}_+)$ can be described by $|\mathbb{I}_{=r+2}^{\rk}|$ semidefinite constraints, which is bounded by $\rk^{r+2}$ and whose size is $n_2$.
We call the sequence $\{\mathcal{I}_{{\rm dP},r}^{n}(\mathbb{E}_+)\}_r$ the dP-type inner-approximation hierarchy.

\subsubsection{Concise expression of $\mathcal{I}_{{\rm dP},r}^{n}(\mathbb{E}_+)$}\label{subsubsec:easier_expression_dP}
In this subsubsection, we explain that the expression~\eqref{eq:characterization_C} can be made more concise.
First, we show that the number of constraints in \eqref{eq:characterization_C} can be halved.
For $\bm{\alpha} = (\bm{\alpha}_1,\alpha_{21},\alpha_{22})\in \mathbb{I}_{=r+2}^{\rk}$, let $\widetilde{\bm{\alpha}} \coloneqq (\bm{\alpha}_1,\alpha_{22},\alpha_{21}) \in \mathbb{I}_{=r+2}^{\rk}$.
As $M^{(11)}(\bm{A},\widetilde{\bm{\alpha}}) = M^{(11)}(\bm{A},\bm{\alpha})$, $\bm{M}^{(21)}(\bm{A},\widetilde{\bm{\alpha}}) = -\bm{M}^{(21)}(\bm{A},\bm{\alpha})$, and $\bm{M}^{(22)}(\bm{A},\widetilde{\bm{\alpha}}) = \bm{M}^{(22)}(\bm{A},\bm{\alpha})$ for each $\bm{A}\in \mathbb{S}^n$, we have
\begin{align}
&\begin{pmatrix}
1\\
\bm{v}
\end{pmatrix}^\top\bm{M}(\bm{A},\widetilde{\bm{\alpha}})\begin{pmatrix}
1\\
\bm{v}
\end{pmatrix} \ge 0 \text{ for all $\bm{v}\in S^{n_2-2}$} \nonumber\\
&\quad\Longleftrightarrow \begin{pmatrix}
1\\
\bm{v}
\end{pmatrix}^\top\begin{pmatrix}
M^{(11)}(\bm{A},\bm{\alpha}) & -\bm{M}^{(21)}(\bm{A},\bm{\alpha})^\top\\
-\bm{M}^{(21)}(\bm{A},\bm{\alpha}) & \bm{M}^{(22)}(\bm{A},\bm{\alpha})
\end{pmatrix}
\begin{pmatrix}
1\\
\bm{v}
\end{pmatrix} \ge 0\text{ for all $\bm{v}\in S^{n_2-2}$} \nonumber\\
&\quad\Longleftrightarrow \begin{pmatrix}
1\\
-\bm{v}
\end{pmatrix}^\top\begin{pmatrix}
M^{(11)}(\bm{A},\bm{\alpha}) & \bm{M}^{(21)}(\bm{A},\bm{\alpha})^\top\\
\bm{M}^{(21)}(\bm{A},\bm{\alpha}) & \bm{M}^{(22)}(\bm{A},\bm{\alpha})
\end{pmatrix}
\begin{pmatrix}
1\\
-\bm{v}
\end{pmatrix} \ge 0\text{ for all $\bm{v}\in S^{n_2-2}$}. \label{eq:reduce_num_constraints}
\end{align}
When $\bm{v}$ takes every element of $S^{n_2-2}$, $-\bm{v}$ also takes every element of $S^{n_2-2}$.
Thus, \eqref{eq:reduce_num_constraints} is equivalent to
\begin{equation*}
\begin{pmatrix}
1\\
\bm{v}
\end{pmatrix}^\top\bm{M}(\bm{A},\bm{\alpha})
\begin{pmatrix}
1\\
\bm{v}
\end{pmatrix} \ge 0\text{ for all $\bm{v}\in S^{n_2-2}$}.
\end{equation*}
That is, taking the intersection with respect to $\bm{\alpha}\in \mathbb{I}_{=r+2}^{\rk}$ with $\alpha_{21}\le \alpha_{22}$ in \eqref{eq:characterization_C} is sufficient.
This can be explained by the fact that the ordering of a Jordan frame can be ignored because $\mathcal{I}_{{\rm dP},r}^{\rk}$ is permutation-invariant; thus, we can apply Theorem~\ref{thm:permutation_invariant}.

Next, we show that some semidefinite constraints in \eqref{eq:characterization_C} can be written as second-order cone or non-negativity constraints.
If $\bm{\alpha} = (\bm{\alpha}_1,\alpha_{21},\alpha_{22})\in \mathbb{I}^{\rk}_{=r+2}$ satisfies
\begin{equation*}
\alpha_{21} = \frac{1}{2}k(k-1),\ \alpha_{22} = \frac{1}{2}k(k+1)\ (k = 0,\dots,\lfloor\sqrt{r+2}\rfloor),
\end{equation*}
then $\bm{M}^{(22)}(\bm{A},\bm{\alpha}) = \bm{O}$.
In this case,
\begin{align*}
&\begin{pmatrix}
1\\
\bm{v}
\end{pmatrix}^\top\bm{M}(\bm{A},\bm{\alpha})\begin{pmatrix}
1\\
\bm{v}
\end{pmatrix} \ge 0 \text{ for all $\bm{v}\in S^{n_2-2}$}\\
&\quad\Longleftrightarrow M^{(11)}(\bm{A},\bm{\alpha}) + 2\bm{M}^{(21)}(\bm{A},\bm{\alpha})^\top\bm{v} \ge 0\text{ for all $\bm{v}\in S^{n_2-2}$}\\
&\quad\Longleftrightarrow M^{(11)}(\bm{A},\bm{\alpha}) - 2\|\bm{M}^{(21)}(\bm{A},\bm{\alpha})\|_2 \ge 0\\
&\quad\Longleftrightarrow \begin{pmatrix}
M^{(11)}(\bm{A},\bm{\alpha})\\
2\bm{M}^{(21)}(\bm{A},\bm{\alpha})
\end{pmatrix}\in \mathbb{L}^{n_2}.
\end{align*}
In particular, when $k=0$, i.e., $\alpha_{21} = \alpha_{22} = 0$, as $\bm{M}^{(21)}(\bm{A},\bm{\alpha})$ is also zero, the above second-order cone constraint reduces to the non-negativity constraint $M^{(11)}(\bm{A},\bm{\alpha}) \ge 0$.

Finally, we show that the size of some semidefinite constraints can be reduced by 1.
If $\bm{\alpha} = (\bm{\alpha}_1,\alpha_{21},\alpha_{22})\in \mathbb{I}^{\rk}_{=r+2}$ satisfies $\alpha_{21} = \alpha_{22} \neq 0$, we have $\bm{M}^{(21)}(\bm{A},\bm{\alpha}) = \bm{0}$.
Then,
\begin{align*}
&\begin{pmatrix}
1\\
\bm{v}
\end{pmatrix}^\top\bm{M}(\bm{A},\bm{\alpha})\begin{pmatrix}
1\\
\bm{v}
\end{pmatrix} \ge 0 \text{ for all $\bm{v}\in S^{n_2-2}$}\\
&\quad\Longleftrightarrow M^{(11)}(\bm{A},\bm{\alpha}) + \bm{v}^\top\bm{M}^{(22)}(\bm{A},\bm{\alpha})\bm{v} \ge 0 \text{ for all $\bm{v}\in S^{n_2-2}$}\\
&\quad\Longleftrightarrow M^{(11)}(\bm{A},\bm{\alpha}) + \lambda_{\min}(\bm{M}^{(22)}(\bm{A},\bm{\alpha})) \ge 0\\
&\quad\Longleftrightarrow M^{(11)}(\bm{A},\bm{\alpha})\bm{I}_{n_2-1} + \bm{M}^{(22)}(\bm{A},\bm{\alpha}) \in \mathbb{S}_+^{n_2-1},
\end{align*}
where $\lambda_{\min}(\cdot)$ denotes the minimum eigenvalue for an input matrix.

\subsection{Outer-approximation hierarchy}\label{subsec:outer_approx}
Unlike the case of inner-approximation hierarchies, an outer-approximation hierarchy for $\COP^{\rk,m}$ always induces that for $\COP^{n,m}(\mathbb{E}_+)$.
\begin{proposition}\label{prop:outer_approx}
Let $\{\mathcal{O}_r^{\rk,m}\}_r$ be a sequence such that each $\mathcal{O}_r^{\rk,m}$ is a closed convex cone, and the sequence satisfies the following two conditions:
\begin{enumerate}[(i)]
\item $\mathcal{O}_{r+1}^{\rk,m} \subseteq \mathcal{O}_r^{\rk,m}$ for all $r\in \mathbb{N}$.
\item $\COP^{\rk,m} = \bigcap_{r=0}^{\infty}\mathcal{O}_r^{\rk,m}$. \label{enum:outer_converge}
\end{enumerate}
In the following, the notation ``$\mathcal{O}_r^{\rk,m} \downarrow \COP^{\rk,m}$'' is used to represent the two conditions as in the case of inner-approximation hierarchies.
We fix a set $\mathfrak{F}$ with $\mathfrak{F}_{\rm c}(\mathbb{E}) \subseteq \mathfrak{F} \subseteq \mathfrak{F}(\mathbb{E})$.
Let
\begin{equation}
\mathcal{O}_r^{n,m}(\mathbb{E}_+) \coloneqq \bigcap_{(c_1,\dots,c_{\rk})\in\mathfrak{F}}\{\mathcal{A}\in\mathcal{S}^{n,m}(\mathbb{E}) \mid \mathcal{A}(c_1,\dots,c_{\rk})\in\mathcal{O}_r^{\rk,m}\}. \label{eq:outer_approx}
\end{equation}
Then, each $\mathcal{O}_r^{n,m}(\mathbb{E}_+)$ is a closed convex cone and $\mathcal{O}_r^{n,m}(\mathbb{E}_+) \downarrow \COP^{n,m}(\mathbb{E}_+)$.
\end{proposition}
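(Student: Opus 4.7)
The plan is to verify the three required properties (closed convex cone, non-increasing sequence, correct limit) by directly combining the assumed outer-approximation properties of $\{\mathcal{O}_r^{\rk,m}\}_r$ with the concise characterization of $\COP^{n,m}(\mathbb{E}_+)$ from Proposition~\ref{prop:permutation_invariant}. Since this is an outer-approximation statement, no analogue of Theorem~\ref{thm:upper_bound} or any quantitative convergence result is needed; the argument is essentially a set-theoretic manipulation of intersections.

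First, I would observe that for each fixed $(c_1,\dots,c_{\rk})\in\mathfrak{F}$, the mapping $\mathcal{A}\mapsto \mathcal{A}(c_1,\dots,c_{\rk})$ is linear from $\mathcal{S}^{n,m}(\mathbb{E})$ to $\mathcal{S}^{\rk,m}$, because each of its coefficients $\langle \mathcal{A},c_{i_1}\otimes\cdots\otimes c_{i_m}\rangle$ is linear in $\mathcal{A}$. Hence the preimage of the closed convex cone $\mathcal{O}_r^{\rk,m}$ under this mapping is itself a closed convex cone, and $\mathcal{O}_r^{n,m}(\mathbb{E}_+)$, being an intersection of such preimages indexed by $\mathfrak{F}$, is a closed convex cone as well. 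The non-increasing property $\mathcal{O}_{r+1}^{n,m}(\mathbb{E}_+)\subseteq \mathcal{O}_r^{n,m}(\mathbb{E}_+)$ follows termwise from the assumption $\mathcal{O}_{r+1}^{\rk,m}\subseteq \mathcal{O}_r^{\rk,m}$.

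For the limit, I would prove both inclusions in $\bigcap_{r=0}^{\infty}\mathcal{O}_r^{n,m}(\mathbb{E}_+) = \COP^{n,m}(\mathbb{E}_+)$. For ``$\supseteq$'', take $\mathcal{A}\in\COP^{n,m}(\mathbb{E}_+)$; by Proposition~\ref{prop:permutation_invariant} we have $\mathcal{A}(c_1,\dots,c_{\rk})\in\COP^{\rk,m}\subseteq\mathcal{O}_r^{\rk,m}$ for every $(c_1,\dots,c_{\rk})\in\mathfrak{F}$ and every $r\in\mathbb{N}$, so $\mathcal{A}\in\mathcal{O}_r^{n,m}(\mathbb{E}_+)$ for all $r$. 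For ``$\subseteq$'', if $\mathcal{A}\in\bigcap_r\mathcal{O}_r^{n,m}(\mathbb{E}_+)$, then for each fixed $(c_1,\dots,c_{\rk})\in\mathfrak{F}$ we obtain $\mathcal{A}(c_1,\dots,c_{\rk})\in\bigcap_r\mathcal{O}_r^{\rk,m}=\COP^{\rk,m}$ by condition~\eqref{enum:outer_converge}. Applying Proposition~\ref{prop:permutation_invariant} once more (with the same $\mathfrak{F}$) then gives $\mathcal{A}\in\COP^{n,m}(\mathbb{E}_+)$.

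There is no genuine obstacle here: unlike the inner case, we do not need to exhibit a single stage $r_0$ at which a given interior element enters the approximation, since outer convergence is automatically preserved under arbitrary intersections. The only delicate point worth checking carefully is that the $\supseteq$ inclusion uses Proposition~\ref{prop:permutation_invariant} in the ``forward'' direction (an element of the COP cone satisfies every slice condition) while the $\subseteq$ inclusion uses it in the ``backward'' direction (satisfying every slice condition implies membership in the COP cone); both directions are already contained in the statement of Proposition~\ref{prop:permutation_invariant}, so the proof reduces to bookkeeping.
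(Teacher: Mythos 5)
Your proof is correct and follows essentially the same route as the paper: closedness, convexity, and monotonicity are obtained termwise from the linearity of $\mathcal{A}\mapsto\mathcal{A}(c_1,\dots,c_{\rk})$, the ``$\supseteq$'' inclusion from Proposition~\ref{prop:permutation_invariant} combined with $\COP^{\rk,m}\subseteq\mathcal{O}_r^{\rk,m}$, and the ``$\subseteq$'' inclusion by swapping the order of the intersections over $r$ and over $\mathfrak{F}$ and invoking the characterization \eqref{eq:COP_characterization_concise} again. Your remark that no analogue of Theorem~\ref{thm:upper_bound} is needed matches the paper's treatment exactly.
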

\begin{proof}
Because $\mathcal{O}_r^{n,m}(\mathbb{E}_+)$ can easily be shown to be a closed convex cone and $\mathcal{O}_{r+1}^{n,m}(\mathbb{E}_+) \subseteq \mathcal{O}_r^{n,m}(\mathbb{E}_+)$ for all $r\in \mathbb{N}$ in the same manner as Proposition~\ref{prop:inner_approx}, we prove only $\COP^{n,m}(\mathbb{E}_+) = \bigcap_{r=0}^{\infty}\mathcal{O}_r^{n,m}(\mathbb{E}_+)$.
The ``$\subseteq$'' part follows from Theorem~\ref{thm:permutation_invariant}.
To prove the ``$\supseteq$'' part, let $\mathcal{A}\in \bigcap_{r=0}^{\infty}\mathcal{O}_r^{n,m}(\mathbb{E}_+)$.
Then, $\mathcal{A}(c_1,\dots,c_{\rk})\in \mathcal{O}_r^{\rk,m}$ for all $r\in\mathbb{N}$ and $(c_1,\dots,c_{\rk})\in \mathfrak{F}$.
By the convergence assumption~\eqref{enum:outer_converge} on $\{\mathcal{O}_r^{\rk,m}\}_r$, we have $\mathcal{A}(c_1,\dots,c_{\rk})\in \COP^{\rk,m}$ for all $(c_1,\dots,c_{\rk})\in \mathfrak{F}$.
Thus, from Theorem~\ref{thm:permutation_invariant}, we obtain $\mathcal{A}\in \COP^{n,m}(\mathbb{E}_+)$.
\end{proof}

\begin{remark}
The set $\bigcup_{r=0}^{\infty}\mathcal{I}_{{\rm dP},r}^n(\mathbb{E}_+)$ involves the intersection with respect to Jordan frames and the union with respect to the depth parameter $r$.
Therefore, to establish the convergence $\setint\COP^{n,2}(\mathbb{E}_+) \subseteq \bigcup_{r=0}^{\infty}\mathcal{I}_{{\rm dP},r}^n(\mathbb{E}_+)$ in the proof of Proposition~\ref{prop:inner_approx}, it is necessary to take $r_0$ such that it is independent of Jordan frames.

On the other hand, the set $\bigcap_{r=0}^{\infty}\mathcal{O}_r^{n,m}(\mathbb{E}_+)$ involves the intersection with respect to the depth parameter as well as Jordan frames.
Thus, as seen in the proof of Proposition~\ref{prop:outer_approx}, the order of the depth parameter and Jordan frames is interchangeable in showing the convergence $\COP^{n,m}(\mathbb{E}_+) = \bigcap_{r=0}^{\infty}\mathcal{O}_r^{n,m}(\mathbb{E}_+)$, which makes the proof easier than the inner-approximation hierarchy.
\end{remark}

As in Proposition~\ref{prop:inner_approx}, the outer-approximation hierarchy induces a semi-infinite conic constraint.
However, as with the inner-approximation hierarchy, when the symmetric cone $\mathbb{E}_+$ is the direct product of a nonnegative orthant and second-order cone and $m = 2$, if we choose an appropriate polyhedral outer-approximation hierarchy as $\{\mathcal{O}_r^{\rk,2}\}_r$ (written as $\{\mathcal{O}_r^{\rk}\}_r$ hereafter), each $\mathcal{O}_r^{n,2}(\mathbb{E}_+)$ (written as $\mathcal{O}_r^n(\mathbb{E}_+)$ hereafter) can be represented by finitely many semidefinite constraints.

\subsubsection{Full expression of $\mathcal{O}_r^n(\mathbb{E}_+)$}
Let $\mathbb{E}$ be the same Euclidean Jordan algebra with the induced symmetric cone $\mathbb{R}_+^{n_1} \times \mathbb{L}^{n_2}$ as defined in Sect.~\ref{subsubsec:full_expression_dP}.
The polyhedral outer-approximation hierarchy $\{\mathcal{O}_r^{\rk}\}_r$ we use is based on a discretization of the standard simplex and written as
\begin{equation}
\mathcal{O}_r^{\rk} = \bigcap_{\bm{x}\in \delta_r^{\rk-1}}\{\bm{A}\in\mathbb{S}^n \mid \bm{x}^\top\bm{A}\bm{x} \ge 0\}, \label{eq:outer_approx_discretization}
\end{equation}
where $\delta_r^{\rk-1}$ is a finite subset of $\Delta_=^{\rk-1}$ for each $r\in \mathbb{N}$.
This type of outer-approximation hierarchy includes, for example, that given by Y{\i}ld{\i}r{\i}m~\cite{Yildirim2012}.
The outer-approximation hierarchy~\eqref{eq:outer_approx} induced by the set~\eqref{eq:F} and polyhedral outer-approximation hierarchy~\eqref{eq:outer_approx_discretization} is
\begin{equation*}
\mathcal{O}_r^n(\mathbb{E}_+) = \bigcap_{\bm{v}\in S^{n_2-2}}\bigcap_{\bm{x}\in \delta_r^{\rk-1}}\{\bm{A}\in\mathbb{S}^n \mid f(\bm{x};\bm{A},\bm{v}) \ge 0\},
\end{equation*}
where $f(\bm{x};\bm{A},\bm{v})$ is defined as \eqref{eq:f}.
Let
\begin{equation*}
\bm{N}(\bm{x},\bm{A}) \coloneqq \sum_{\bm{\alpha}\in \mathbb{I}_{=2}^{\rk}}\frac{1}{\bm{\alpha}!}\bm{M}(\bm{A},\bm{\alpha})\bm{x}^{\bm{\alpha}}.
\end{equation*}
Then,
\begin{align}
\mathcal{O}_r^n(\mathbb{E}_+) &= \bigcap_{\bm{x}\in\delta_r^{\rk-1}}\bigcap_{\bm{v}\in S^{n_2-2}}\left\{\bm{A}\in\mathbb{S}^n \relmiddle| \begin{pmatrix}
1\\
\bm{v}
\end{pmatrix}^\top\bm{N}(\bm{x},\bm{A})\begin{pmatrix}
1\\
\bm{v}
\end{pmatrix} \ge 0\right\} \nonumber\\
&= \bigcap_{\bm{x}\in\delta_r^{\rk-1}}\{\bm{A}\in\mathbb{S}^n \mid \bm{N}(\bm{x},\bm{A}) \in \COP(\partial \mathbb{L}^{n_2})\} \nonumber\\
&= \bigcap_{\bm{x}\in\delta_r^{\rk-1}}\left\{\bm{A}\in\mathbb{S}^n \relmiddle|\begin{aligned}
&\text{There exists $t_{\bm{x}}\in\mathbb{R}$ such that}\\
&\bm{N}(\bm{x},\bm{A}) - t_{\bm{x}}\begin{pmatrix}
1 & \bm{0}\\
\bm{0} & -\bm{I}_{n_2-1}
\end{pmatrix}
\in \mathbb{S}_+^{n_2}
\end{aligned}
\right\}. \label{eq:characterization_O}
\end{align}
In summary, $\mathcal{O}_r^n(\mathbb{E}_+)$ can be described by the $|\delta_r^{\rk-1}|$ semidefinite constraints of size $n_2$.
In particular, if we use the outer-approximation hierarchy $\{\mathcal{O}_r^{\rk}\}_r$ proposed by Y{\i}ld{\i}r{\i}m~\cite{Yildirim2012}, then $\delta_r^{\rk-1}$ is given by
\begin{equation}
\delta_r^{\rk-1} = \bigcup_{k=0}^r\{\bm{x}\in\Delta_=^{\rk-1}\mid (k+2)\bm{x}\in\mathbb{N}^{\rk}\}, \label{eq:delta_Yildirim}
\end{equation}
and $|\delta_r^{\rk-1}|$ is bounded by $\rk^2(\frac{\rk^{r+1}-1}{\rk-1})$, which is polynomial in $\rk$ for every fixed $r\in \mathbb{N}$ (see~\cite[Eq.~(10)]{Yildirim2012}).
We call the sequence $\{\mathcal{O}_r^n(\mathbb{E}_+)\}_r$ obtained by exploiting the hierarchy given by Y{\i}ld{\i}r{\i}m~\cite{Yildirim2012} the Y{\i}ld{\i}r{\i}m-type outer-approximation hierarchy.

\subsubsection{Concise expression of $\mathcal{O}_r^n(\mathbb{E}_+)$}\label{subsubsec:easier_expression_Yildirim}
As with the inner-approximation hierarchy, we can make the expression \eqref{eq:characterization_O} more concise.
Let $\bm{N}(\bm{x},\bm{A})$ be partitioned as follows:
\begin{equation*}
\bm{N}(\bm{x},\bm{A}) = \begin{pmatrix}
N^{(11)}(\bm{x},\bm{A}) & \bm{N}^{(21)}(\bm{x},\bm{A})^\top\\
\bm{N}^{(21)}(\bm{x},\bm{A}) & \bm{N}^{(22)}(\bm{x},\bm{A})
\end{pmatrix},
\end{equation*}
where $N^{(11)}(\bm{x},\bm{A})\in\mathbb{R}$, $\bm{N}^{(21)}(\bm{x},\bm{A})\in\mathbb{R}^{n_2-1}$, and $\bm{N}^{(22)}(\bm{x},\bm{A})\in\mathbb{S}^{n_2-1}$ and $\bm{x} = (\bm{x}_1,x_{21},x_{22})\in\mathbb{R}^{\rk}$.
Then, we have
\begin{align*}
N^{(11)}(\bm{x},\bm{A}) &= 4\bm{x}_1^\top \bm{A}^{(11)}\bm{x}_1 + 4(x_{21}+x_{22})\bm{x}_1^\top\bm{A}^{(121)} + (x_{21}+x_{22})^2 A^{(2121)}\\
\bm{N}^{(21)}(\bm{x},\bm{A}) &= 2(x_{21}-x_{22})\bm{A}^{(122)}\bm{x}_1 + (x_{21}^2-x_{22}^2)\bm{A}^{(2122)}\\
\bm{N}^{(22)}(\bm{x},\bm{A}) &= (x_{21}-x_{22})^2\bm{A}^{(2222)}.
\end{align*}

First, the number of constraints in \eqref{eq:characterization_O} may be reduced.
Suppose that $\delta_r^{\rk-1}$ is permutation-invariant in the sense of Definition~\ref{def:permutation_invariant}.
(Note that $\mathcal{S}^{\rk,1} = \mathbb{R}^{\rk}$.)
For example, \eqref{eq:delta_Yildirim} is permutation-invariant.
For $\bm{x} = (\bm{x}_1,x_{21},x_{22})\in\delta_r^{\rk-1}$, let $\widetilde{\bm{x}} \coloneqq (\bm{x}_1,x_{22},x_{21})$, then $\widetilde{\bm{x}}\in\delta_r^{\rk-1}$ because $\delta_r^{\rk-1}$ is permutation-invariant.
Given that $N^{(11)}(\widetilde{\bm{x}},\bm{A}) = N^{(11)}(\bm{x},\bm{A})$, $\bm{N}^{(21)}(\widetilde{\bm{x}},\bm{A}) = -\bm{N}^{(21)}(\bm{x},\bm{A})$, and $\bm{N}^{(22)}(\widetilde{\bm{x}},\bm{A}) = \bm{N}^{(22)}(\bm{x},\bm{A})$ for each $\bm{A}\in\mathbb{S}^n$, taking the intersection with respect to $\bm{x}\in\delta_r^{\rk-1}$ with $x_{21}\le x_{22}$ in \eqref{eq:characterization_O} is sufficient for the same reason as for Sect.~\ref{subsubsec:easier_expression_dP}.

Second, some semidefinite constraints in \eqref{eq:characterization_O} can be written as non-negativity constraints.
If $\bm{x} = (\bm{x}_1,x_{21},x_{22})\in\delta_r^{\rk-1}$ satisfies $x_{21} = x_{22}$, then we have $\bm{N}^{(21)}(\bm{x},\bm{A}) = \bm{0}$ and $\bm{N}^{(22)}(\bm{x},\bm{A}) = \bm{O}$.
Therefore, the constraint
\begin{equation*}
\begin{pmatrix}
1\\
\bm{v}
\end{pmatrix}^\top\bm{N}(\bm{x},\bm{A})\begin{pmatrix}
1\\
\bm{v}
\end{pmatrix} \ge 0 \text{ for all $\bm{v}\in S^{n_2-2}$}
\end{equation*}
reduces to $N^{(11)}(\bm{x},\bm{A}) \ge 0$.

\section{Comparison with other approximation hierarchies}\label{sec:compare_hierarchy}
The previous sections provide new approximation hierarchies applicable to the COP cone $\COP(\mathbb{K})$ over the cone $\mathbb{K} = \mathbb{R}^{n_1}\times \mathbb{L}^{n_2}$.
For such $\mathbb{K}$, those given by Zuluaga et al.~\cite{ZVP2006} and Lasserre~\cite{Lasserre2014} are also applicable to $\COP(\mathbb{K})$.
In this section, we analytically compare some properties of the proposed approximation hierarchies for $\COP(\mathbb{K})$ with other existing hierarchies.
In the following, we set $n \coloneqq n_1 + n_2$ as in Sect.~\ref{sec:approx_COP_spectrum} and reindex $(1,\dots,n)$ as $(11,\dots,1n_1,21,\dots,2n_2)$, i.e., $1i \coloneqq i$ for $i = 1,\dots,n_1$ and $2i \coloneqq n_1 + i$ for $i = 1,\dots,n_2$.
We use this notation for numerical experiments in Sect.~\ref{sec:experiment} as well.

First, the cone $\mathbb{K} = \mathbb{R}^{n_1}\times \mathbb{L}^{n_2}$ can be represented as a semialgebraic set
\begin{equation}
\left\{\bm{x}\in\mathbb{R}^n \relmiddle|
\begin{array}{l}
\phi_i^{(1)}(\bm{x})\coloneqq x_{1i} \ge 0\ (i = 1,\dots,n_1),\\
\phi_{n_1+1}^{(1)}(\bm{x})\coloneqq x_{21} \ge 0,\\
\phi_{n_1+2}^{(1)}(\bm{x})\coloneqq \bm{e}^\top\bm{x} \ge 0,\\
\phi^{(2)}(\bm{x})\coloneqq x_{21}^2 - \sum_{i=2}^{n_2}x_{2i}^2 \ge 0
\end{array}\right\}, \label{eq:nno_soc_semialgebraic}
\end{equation}
where $\bm{e} \coloneqq (\bm{1}_{n_1+1},\bm{0}_{n_2-1})\in \setint(\mathbb{K})$; thus, the inner-approximation hierarchy given by Zuluaga et al.~\cite{ZVP2006} is applicable to $\COP(\mathbb{K})$.
Note that the inequality $\bm{e}^\top\bm{x} \ge 0$ in \eqref{eq:nno_soc_semialgebraic} is redundant but necessary to deriving the hierarchy (see \cite[Assumption~1]{ZVP2006}).
The inner-approximation hierarchy for $\COP(\mathbb{K})$ is summarized as follows:
\begin{theorem}[{\cite[Proposition~17]{ZVP2006}}]
Let
\begin{equation}
E^{n,m}(\mathbb{K}) \coloneqq \conv\left\{\psi^2\prod_{j=1}^k\phi_{i_j}\relmiddle|
\begin{array}{l}
k\in \mathbb{N},\ m-\sum_{j=1}^k\deg(\phi_{i_j})\in \mathbb{N}\text{ is even},\\
\psi\in H^{n,(m-\sum_{j=1}^k\deg(\phi_{i_j}))/2},\\
\phi_{i_j}\in \{\phi_1^{(1)},\dots,\phi_{n_1+2}^{(1)},\phi^{(2)}\}\ (j=1,\dots,k)
\end{array}
\right\} \label{eq:Enm}
\end{equation}
and $\mathcal{K}_{{\rm ZVP},r}(\mathbb{K}) \coloneqq \{\bm{A}\in\mathbb{S}^n\mid (\bm{e}^\top\bm{x})^r\bm{x}^\top\bm{A}\bm{x}\in E^{n,r+2}(\mathbb{K})\}$ for each $r\in\mathbb{N}$.
Then, the sequence $\{\mathcal{K}_{{\rm ZVP},r}(\mathbb{K})\}_r$ satisfies $\mathcal{K}_{{\rm ZVP},r}(\mathbb{K}) \uparrow \COP(\mathbb{K})$.
\end{theorem}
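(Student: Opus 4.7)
The plan is to verify the four standard properties of an inner-approximation hierarchy: each $\mathcal{K}_{\mathrm{ZVP},r}(\mathbb{K})$ is a closed convex cone, the sequence is nondecreasing, each $\mathcal{K}_{\mathrm{ZVP},r}(\mathbb{K}) \subseteq \COP(\mathbb{K})$, and $\setint\COP(\mathbb{K}) \subseteq \bigcup_{r=0}^{\infty}\mathcal{K}_{\mathrm{ZVP},r}(\mathbb{K})$. The first three are routine structural checks; the fourth is the analytic core of the argument, resting on a homogeneous Positivstellensatz adapted to the semialgebraic description~\eqref{eq:nno_soc_semialgebraic}.

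For the routine parts, $E^{n,r+2}(\mathbb{K})$ is a convex cone by construction, and closedness follows from a standard argument on truncated quadratic modules of homogeneous polynomials of fixed degree; hence $\mathcal{K}_{\mathrm{ZVP},r}(\mathbb{K})$, as the preimage of $E^{n,r+2}(\mathbb{K})$ under the linear map $\bm{A}\mapsto (\bm{e}^\top\bm{x})^r\bm{x}^\top\bm{A}\bm{x}$, is a closed convex cone. Monotonicity is immediate: given a certificate in $E^{n,r+2}(\mathbb{K})$, multiplying it by the generator $\phi_{n_1+2}^{(1)}(\bm{x}) = \bm{e}^\top\bm{x}$ keeps the product in $E^{n,r+3}(\mathbb{K})$. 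Containment in $\COP(\mathbb{K})$ follows because each $\phi_i^{(1)},\phi^{(2)}$ is nonnegative on $\mathbb{K}$ and $\psi^2\geq 0$, so every element of $E^{n,r+2}(\mathbb{K})$ is nonnegative on $\mathbb{K}$; since $\bm{e}\in\setint(\mathbb{K})$ and $\mathbb{K}$ is self-dual, $\bm{e}^\top\bm{x}>0$ on $\mathbb{K}\setminus\{\bm{0}\}$, so the positive factor $(\bm{e}^\top\bm{x})^r$ can be divided out to yield $\bm{x}^\top\bm{A}\bm{x}\geq 0$.

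For the convergence step, take $\bm{A}\in\setint\COP(\mathbb{K})$, so that $\bm{x}^\top\bm{A}\bm{x}>0$ on $\mathbb{K}\setminus\{\bm{0}\}$. The approach is dehomogenization: restrict to the slice $C\coloneqq\{\bm{x}\in\mathbb{K}\mid \bm{e}^\top\bm{x}=1\}$, which is compact because $\bm{e}\in\setint(\mathbb{K})$ and $\mathbb{K}$ is pointed. On $C$, $\bm{x}^\top\bm{A}\bm{x}$ is strictly positive, and the quadratic module generated by $\phi_1^{(1)},\dots,\phi_{n_1+1}^{(1)},\phi^{(2)}$ together with the affine constraint $\bm{e}^\top\bm{x}=1$ is Archimedean, which is precisely the role of the redundant generator $\phi_{n_1+2}^{(1)}=\bm{e}^\top\bm{x}$: it bounds each coordinate on $C$. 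Schm\"udgen's Positivstellensatz then produces a representation of $\bm{x}^\top\bm{A}\bm{x}$ on $C$ as a finite sum of products of squares with the constraint polynomials. Rehomogenizing by multiplying each summand by the appropriate power of $\bm{e}^\top\bm{x}$ to equalize the total degree produces a certificate for $(\bm{e}^\top\bm{x})^r\bm{x}^\top\bm{A}\bm{x}\in E^{n,r+2}(\mathbb{K})$, with $r$ the maximum power required across all finitely many summands.

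The main obstacle is carrying out this rehomogenization so that the resulting certificate lies in the precise form~\eqref{eq:Enm}: each term must be $\psi^2\prod_j\phi_{i_j}$ with $\psi$ homogeneous and the gap $(r+2)-\sum_j\deg(\phi_{i_j})$ a nonnegative even integer. The non-homogeneous sums of squares from Schm\"udgen must be split into individual squared polynomials and each one homogenized by inserting the correct power of $\bm{e}^\top\bm{x}$; careful degree bookkeeping is needed so that a single uniform depth $r$ balances all the terms simultaneously. Together with the fact that the dehomogenization map $\bm{x}\mapsto \bm{x}/(\bm{e}^\top\bm{x})$ carries $\mathbb{K}\setminus\{\bm{0}\}$ onto $C$, this bookkeeping completes the convergence argument.
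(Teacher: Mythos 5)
The paper does not prove this theorem itself --- it is quoted from Zuluaga, Vera, and Pe\~{n}a \cite[Proposition~17]{ZVP2006} --- and your outline follows essentially the same route as that source: the routine cone/monotonicity/containment checks, then dehomogenization to the compact base $\{\bm{x}\in\mathbb{K}\mid \bm{e}^\top\bm{x}=1\}$, Schm\"udgen's Positivstellensatz, and rehomogenization via powers of the redundant linear generator $\bm{e}^\top\bm{x}$. Two small caveats: closedness of $E^{n,r+2}(\mathbb{K})$ is not needed for the ``$\uparrow$'' conditions as this paper defines them (and is less standard for truncated preorderings than you suggest), and Schm\"udgen's theorem requires only compactness of the base rather than Archimedeanity of the quadratic module --- the redundant generator's essential role is as the homogenizing multiplier, exactly as you use it in the final bookkeeping step.
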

In the following, we call the sequence $\{\mathcal{K}_{{\rm ZVP},r}(\mathbb{K})\}_r$ the ZVP-type inner-approximation hierarchy.
Although the representation~\eqref{eq:Enm} of the set $E^{n,m}(\mathbb{K})$ is somewhat abstract, we can represent it recursively.

\begin{lemma}\label{lem:Enm_recursion}
If $m = 2k$ for some $k\in\mathbb{N}$, then
\begin{align*}
&E^{n,m}(\mathbb{K}) \\
&\quad= \conv\left(\Sigma^{n,2k} \cup \left\{\psi\phi_i^{(1)}\relmiddle|
\begin{array}{l}
i = 1,\dots,n_1+2,\\
\psi\in E^{n,2k-1}(\mathbb{K})
\end{array}
\right\} \cup \{\psi\phi_1^{(2)}\mid \psi\in E^{n,2k-2}(\mathbb{K})\}\right)\\
&\quad= \left\{\psi^{(0)} + \sum_{i=1}^{n_1+2}\psi_i^{(1)}\phi_i^{(1)} + \psi^{(2)}\phi^{(2)} \relmiddle|
\begin{array}{l}
\psi^{(0)}\in\Sigma^{n,2k},\\
\psi_i^{(1)}\in E^{n,2k-1}(\mathbb{K}),\\
\psi^{(2)}\in E^{n,2k-2}(\mathbb{K})
\end{array}
\right\}.
\end{align*}
If $m = 2k + 1$ for some $k\in\mathbb{N}$, then
\begin{align*}
E^{n,m}(\mathbb{K}) &= \conv\left(\left\{\psi\phi_i^{(1)}\relmiddle|
\begin{array}{l}
i = 1,\dots,n_1+2,\\
\psi\in E^{n,2k}(\mathbb{K})
\end{array}
\right\} \cup \{\psi\phi_1^{(2)}\mid \psi\in E^{n,2k-1}(\mathbb{K})\}\right)\\
&= \left\{\sum_{i=1}^{n_1+2}\psi_i^{(1)}\phi_i^{(1)} + \psi^{(2)}\phi^{(2)} \relmiddle|
\begin{array}{l}
\psi_i^{(1)}\in E^{n,2k}(\mathbb{K}),\\
\psi^{(2)}\in E^{n,2k-1}(\mathbb{K})
\end{array}
\right\}.
\end{align*}
\end{lemma}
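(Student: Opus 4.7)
The plan is to unpack the definition~\eqref{eq:Enm} of $E^{n,m}(\mathbb{K})$ and sort its generators $\psi^2 \prod_{j=1}^{\ell} \phi_{i_j}$ according to which of the building blocks $\phi_1^{(1)},\dots,\phi_{n_1+2}^{(1)},\phi^{(2)}$ appears as a factor. The two stated equalities will then follow from (i) a decomposition of the generating set, organized by pulling out a single $\phi$-factor, and (ii) the standard identity that for convex cones $\mathcal{C}_1,\dots,\mathcal{C}_r$ containing $0$, $\conv(\mathcal{C}_1 \cup \cdots \cup \mathcal{C}_r) = \mathcal{C}_1 + \cdots + \mathcal{C}_r$.

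For the even case $m = 2k$, I would show that every generator falls into at least one of three classes: (a) $\ell = 0$, producing a square $\psi^2$ with $\psi \in H^{n,k}$, whose convex hull over such $\psi$ is $\Sigma^{n,2k}$; (b) some $\phi_{i_j}$ equals $\phi_i^{(1)}$, in which case factoring it out yields a polynomial of total degree $2k-1$ that is a generator of $E^{n,2k-1}(\mathbb{K})$; and (c) $\ell \ge 1$ with no $\phi_i^{(1)}$ factor, so some factor must be $\phi^{(2)}$, which pulled out gives a generator of $E^{n,2k-2}(\mathbb{K})$. A direct degree-arithmetic check confirms that in both (b) and (c) the parity of $m - \sum_j \deg(\phi_{i_j})$ is preserved and the associated $\psi$-degree $(m - \sum_j \deg(\phi_{i_j}))/2$ is unchanged. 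Taking convex hulls on both sides, and using that multiplication by a fixed polynomial is linear so that $\conv\{\phi \cdot g : g \text{ generator of } E^{n,m'}(\mathbb{K})\} = \{\phi\chi : \chi \in E^{n,m'}(\mathbb{K})\}$, yields the first equality. The second equality then follows from identity (ii), applied to $\Sigma^{n,2k}$, to each $\{\psi \phi_i^{(1)} : \psi \in E^{n,2k-1}(\mathbb{K})\}$, and to $\{\psi \phi^{(2)} : \psi \in E^{n,2k-2}(\mathbb{K})\}$, all of which contain $0$ by taking $\psi = 0$.

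The odd case $m = 2k+1$ proceeds identically, with one simplification: since $\phi^{(2)}$ contributes an even amount to $\sum_j \deg(\phi_{i_j})$ while $m$ is odd, every generator must contain an odd (hence at least one) number of $\phi_i^{(1)}$ factors, so classes (a) and (c) collapse into case (b). The redundant $\psi^{(2)}\phi^{(2)}$ summand in the stated formula causes no harm, since $\{\psi\phi^{(2)} : \psi \in E^{n,2k-1}(\mathbb{K})\} \subseteq E^{n,m}(\mathbb{K})$ is a convex cone containing $0$ and one may always take $\psi^{(2)} = 0$. The main obstacle is purely bookkeeping: verifying that pulling a single $\phi_{i_j}$ out of a generator of $E^{n,m}(\mathbb{K})$ produces a bona fide generator of $E^{n,m-\deg(\phi_{i_j})}(\mathbb{K})$, which reduces to the parity and $\psi$-degree checks above. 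Once this is confirmed, identity (ii) closes the argument mechanically.
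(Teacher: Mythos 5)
Your proof is correct. The paper states Lemma~\ref{lem:Enm_recursion} without providing a proof, so there is nothing to compare against; your argument --- classifying each generator $\psi^2\prod_{j}\phi_{i_j}$ of $E^{n,m}(\mathbb{K})$ according to whether it is a pure square, contains some linear factor $\phi_i^{(1)}$, or contains only $\phi^{(2)}$ factors, checking that pulling out one factor preserves the parity condition and the $\psi$-degree, and then invoking $\conv(\mathcal{C}_1\cup\cdots\cup\mathcal{C}_r)=\mathcal{C}_1+\cdots+\mathcal{C}_r$ for convex cones containing $0$ --- is the natural route, and your treatment of the odd case (every generator must contain an odd, hence positive, number of linear factors, and the $\psi^{(2)}\phi^{(2)}$ summand is redundant but harmless) is also right. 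The only loose end is the degenerate index $k=0$, where the stated formulas refer to $E^{n,-1}(\mathbb{K})$ and $E^{n,-2}(\mathbb{K})$; that is an artifact of the lemma's statement rather than a gap in your reasoning, and is immaterial for the degrees $m=r+2\ge 2$ actually used in the paper.
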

From Lemma~\ref{lem:Enm_recursion}, we note that each $\mathcal{K}_{{\rm ZVP},r}(\mathbb{K})$ can be described by semidefinite constraints.
More precisely, the size and number of the semidefinite constraints that define the set $E^{n,m}(\mathbb{K})$ can be calculated.
\begin{proposition}\label{prop:ZVP_size_number}
Let
\begin{equation*}
a_m \coloneqq \frac{1}{\sqrt{\rk^2+4}}\left\{\left(\frac{\rk+\sqrt{\rk^2+4}}{2}\right)^{m+1}-\left(\frac{\rk-\sqrt{\rk^2+4}}{2}\right)^{m+1}\right\}
\end{equation*}
for each $m\in\mathbb{N}$.
Note that $a_m$ is the $m$th term of the recurrence $a_{m+2} = \rk a_{m+1} + a_m$ with initial conditions $a_0 = 1$ and $a_1 = \rk$ and is of order $O(n_1^m)$.
If $m = 2k$ for some $k\in\mathbb{N}$, then $E^{n,m}(\mathbb{K})$ is described by $a_{2i}$ semidefinite constraints of size $|\mathbb{I}_{=k-i}^n|$ $(i = 0,\dots,k)$.
If $m = 2k+1$ for some $k\in\mathbb{N}$, then $E^{n,m}(\mathbb{K})$ is described by $a_{2i+1}$ semidefinite constraints of size $|\mathbb{I}_{=k-i}^n|$ $(i = 0,\dots,k)$.
\end{proposition}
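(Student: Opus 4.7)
The plan is to prove both claims by induction on $m$ using the recursive decomposition of $E^{n,m}(\mathbb{K})$ given by Lemma~\ref{lem:Enm_recursion}. Before starting the induction, I would observe that the sequence $(a_m)$ is simply the solution of the linear recurrence $a_{m+2} = \rk\, a_{m+1} + a_m$ with $a_0 = 1$ and $a_1 = \rk$: the characteristic polynomial $x^2 - \rk x - 1$ has roots $\lambda_\pm = (\rk \pm \sqrt{\rk^2 + 4})/2$, and the stated closed form is the corresponding Binet-type expression. The asymptotic $a_m = O(n_1^m)$ follows from $\lambda_+ = \Theta(\rk) = \Theta(n_1)$.

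Next, let $N_j(m)$ denote the number of SDP constraints of size $|\mathbb{I}^n_{=j}|$ in the representation of $E^{n,m}(\mathbb{K})$ produced by Lemma~\ref{lem:Enm_recursion}. I would prove by strong induction on $m$ the unified statement that $N_j(m) = a_{m-2j}$ whenever $0 \le j \le \lfloor m/2 \rfloor$, and $N_j(m) = 0$ otherwise; the two halves of the proposition then follow under the reparametrization $j = k - i$. The base cases are immediate: $E^{n,0}(\mathbb{K}) = \mathbb{R}_+$ needs a single nonnegativity constraint (an SDP of size $|\mathbb{I}^n_{=0}| = 1$), giving $N_0(0) = 1 = a_0$, and $E^{n,1}(\mathbb{K})$ decomposes as a Minkowski sum of $\rk$ copies of $E^{n,0}(\mathbb{K})$ (one per factor $\phi_i^{(1)}$), giving $N_0(1) = \rk = a_1$.

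For the inductive step, I would use that multiplying a generator by $\phi_i^{(1)}$ or $\phi^{(2)}$ affects only the polynomial degree, not the SDP sizes of its description. So Lemma~\ref{lem:Enm_recursion} yields $N_j(2k) = \rk\, N_j(2k-1) + N_j(2k-2)$ plus an additional $1$ from $\Sigma^{n,2k}$ in the case $j = k$, and $N_j(2k+1) = \rk\, N_j(2k) + N_j(2k-1)$. Substituting the induction hypothesis and invoking $a_{\ell+2} = \rk\, a_{\ell+1} + a_\ell$ gives $N_j(2k) = a_{2(k-j)}$ and $N_j(2k+1) = a_{2(k-j)+1}$; the boundary case $j = k$ for $m = 2k$ works because $N_k(2k-1) = N_k(2k-2) = 0$ (since $k$ exceeds $\lfloor (2k-1)/2 \rfloor$), so only the $\Sigma^{n,2k}$ contribution $a_0 = 1$ remains. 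There is no serious obstacle here: the argument is essentially bookkeeping, and the only care needed is the alignment between the degree $m$ and the size index $j$, together with the vanishing of $N_j$ for out-of-range $j$ which cleanly handles each recursion's boundary.
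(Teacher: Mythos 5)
Your proof is correct: the paper states Proposition~\ref{prop:ZVP_size_number} without proof, and your induction via the recursion of Lemma~\ref{lem:Enm_recursion} (with $\rk$ copies of $E^{n,m-1}(\mathbb{K})$ and one copy of $E^{n,m-2}(\mathbb{K})$ producing exactly the recurrence $a_{m+2}=\rk a_{m+1}+a_m$, plus the single size-$|\mathbb{I}^n_{=k}|$ block from $\Sigma^{n,2k}$ seeding $a_0=1$) is evidently the intended argument. The bookkeeping, base cases, and boundary handling via $N_j(m)=0$ for $j>\lfloor m/2\rfloor$ all check out.
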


Second, we introduce the outer-approximation hierarchy given by Lasserre~\cite{Lasserre2014}.
Let $\Delta(\mathbb{K}) \coloneqq \{\bm{x}\in\mathbb{K}\mid \bm{e}^\top\bm{x}\le 1\}$, which is a compact set of $\mathbb{R}^n$.
Note that $\bm{A}\in \COP(\mathbb{K})$ if and only if $\bm{A}\in \COP(\Delta(\mathbb{K}))$ for each $\bm{A}\in \mathbb{S}^n$ with a slight abuse of notation.
Let $\nu$ be the finite Borel measure uniformly supported on $\Delta(\mathbb{K})$, i.e., $\nu(B) \coloneqq \int_B 1_{\Delta(\mathbb{K})}d\bm{x}$
for each $B$ in the Borel $\sigma$-algebra of $\mathbb{R}^n$, where $1_{\Delta(\mathbb{K})}$ is the indicator function of $\Delta(\mathbb{K})$, and the notation $d\bm{x}$ represents the Lebesgue measure.
Then, the moment $\bm{y} = (y_{\bm{\alpha}})_{\bm{\alpha}\in\mathbb{N}^n}$ of the measure $\nu$ satisfies
\begin{align}
y_{\bm{\alpha}} &\coloneqq \int_{\mathbb{R}^n}\bm{x}^{\bm{\alpha}}d\nu \nonumber\\
&= \begin{cases}
\frac{2\bm{\alpha}_1!\left(\sum_{i=1}^{n_2}\alpha_{2i}+n_2-1\right)!\prod_{i=2}^{n_2}\Gamma(\beta_{2i})}{\left(\sum_{i=2}^{n_2}\alpha_{2i}+n_2-1\right)(n+|\bm{\alpha}|)!\Gamma(\sum_{i=2}^{n_2}\beta_{2i})} & \text{(if all of $\alpha_{22},\dots,\alpha_{2n_n}$ are even)},\\
0 & \text{(if some of $\alpha_{22},\dots,\alpha_{2n_n}$ are odd)}
\end{cases} \label{eq:moment}
\end{align}
for each $\bm{\alpha} = (\bm{\alpha}_1,\alpha_{21},\dots,\alpha_{2n_2})\in\mathbb{N}^n$, where $\Gamma(\cdot)$ denotes the gamma function and $\beta_{2i} \coloneqq (\alpha_{2i}+1)/2$ for $i = 2,\dots,n_2$.
See Appendix~\ref{apdx:moment} for the calculation of \eqref{eq:moment}.
Using the moment, the outer-approximation hierarchy for $\COP(\mathbb{K})$ given by Lasserre~\cite{Lasserre2014} can be constructed, which we call the Lasserre-type outer-approximation hierarchy.

\begin{theorem}[{\cite[Sect.~2.4]{Lasserre2014}}]
For each $r\in\mathbb{N}$, we define
\begin{equation*}
\mathcal{K}_{{\rm L},r}(\mathbb{K}) \coloneqq \left\{\bm{A}\in\mathbb{S}^n\relmiddle| \bm{M}_r(f_{\bm{A}}\bm{y}) \in \mathbb{S}_+^{\mathbb{I}_{\le r}^n}\right\},
\end{equation*}
where $\bm{M}_r(f_{\bm{A}}\bm{y})$ is the symmetric matrix with the $(\bm{\alpha},\bm{\beta})$th element $\sum_{i,j=1}^nA_{ij}y_{\bm{\alpha}+\bm{\beta}+\bm{e}_i+\bm{e}_j}$ for each $\bm{\alpha},\bm{\beta}\in\mathbb{I}_{\le r}^n$.
Then, the sequence $\{\mathcal{K}_{{\rm L},r}(\mathbb{K})\}_r$ satisfies $\mathcal{K}_{{\rm L},r}(\mathbb{K}) \downarrow \COP(\mathbb{K})$.
\end{theorem}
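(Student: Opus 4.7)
The plan is to prove the three pieces separately: that each $\mathcal{K}_{{\rm L},r}(\mathbb{K})$ is a closed convex cone, that the sequence is nonincreasing, and that its intersection coincides with $\COP(\mathbb{K})$. The first piece is immediate because $\bm{A}\mapsto \bm{M}_r(f_{\bm{A}}\bm{y})$ is linear in $\bm{A}$ (the moments $y_{\bm{\alpha}}$ are fixed constants) and $\mathbb{S}_+^{\mathbb{I}_{\le r}^n}$ is a closed convex cone, so the preimage is too. The monotonicity $\mathcal{K}_{{\rm L},r+1}(\mathbb{K})\subseteq \mathcal{K}_{{\rm L},r}(\mathbb{K})$ is also essentially automatic: $\bm{M}_r(f_{\bm{A}}\bm{y})$ is exactly the principal submatrix of $\bm{M}_{r+1}(f_{\bm{A}}\bm{y})$ indexed by $\mathbb{I}_{\le r}^n\subseteq \mathbb{I}_{\le r+1}^n$, and principal submatrices of PSD matrices are PSD.

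The heart of the argument is the key integral identity that links the localizing matrix to the quadratic form $f_{\bm{A}}(\bm{x})=\bm{x}^\top\bm{A}\bm{x}$. For any coefficient vector $\bm{p}=(p_{\bm{\alpha}})_{\bm{\alpha}\in\mathbb{I}_{\le r}^n}$ and the associated polynomial $p(\bm{x})=\sum_{\bm{\alpha}}p_{\bm{\alpha}}\bm{x}^{\bm{\alpha}}$, I would expand and use $y_{\bm{\alpha}}=\int_{\Delta(\mathbb{K})}\bm{x}^{\bm{\alpha}}d\bm{x}$ together with Fubini to obtain
\begin{equation*}
\bm{p}^\top \bm{M}_r(f_{\bm{A}}\bm{y})\bm{p}=\int_{\Delta(\mathbb{K})} p(\bm{x})^2\, f_{\bm{A}}(\bm{x})\, d\bm{x}.
\end{equation*}
The inclusion $\COP(\mathbb{K})\subseteq \mathcal{K}_{{\rm L},r}(\mathbb{K})$ is then transparent: if $f_{\bm{A}}\ge 0$ on $\mathbb{K}\supseteq \Delta(\mathbb{K})$, the integrand is nonnegative, hence the left side is nonnegative for every $\bm{p}$, proving $\bm{M}_r(f_{\bm{A}}\bm{y})\succeq 0$.

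The convergence direction $\bigcap_{r\ge 0}\mathcal{K}_{{\rm L},r}(\mathbb{K})\subseteq \COP(\mathbb{K})$ is the main obstacle, and I would prove it by contradiction. Suppose $\bm{A}\in \bigcap_r \mathcal{K}_{{\rm L},r}(\mathbb{K})$ but $\bm{A}\notin \COP(\mathbb{K})$. Then there is $\bm{x}_0\in \mathbb{K}\setminus\{\bm{0}\}$ with $f_{\bm{A}}(\bm{x}_0)<0$, and by homogeneity of $f_{\bm{A}}$ and of the defining inequalities of $\mathbb{K}$, we may rescale so that $\bm{x}_0\in \setint(\Delta(\mathbb{K}))$. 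By continuity there is an open ball $U\subseteq \setint(\Delta(\mathbb{K}))$ around $\bm{x}_0$ on which $f_{\bm{A}}\le -\varepsilon<0$. Since $\Delta(\mathbb{K})$ is compact, the Stone--Weierstrass theorem supplies a polynomial $p$ such that $p(\bm{x})^2$ is close to a bump concentrated on $U$ and uniformly small outside; making the concentration sharp enough forces $\int_{\Delta(\mathbb{K})} p^2 f_{\bm{A}}\, d\bm{x}<0$, contradicting the identity above together with $\bm{M}_r(f_{\bm{A}}\bm{y})\succeq 0$ for $r\ge \deg p$.

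The delicate point I expect to need care with is the Stone--Weierstrass step: one must control $\int p^2 f_{\bm{A}}$ on the complement of $U$ uniformly, using only the supremum of $|f_{\bm{A}}|$ on $\Delta(\mathbb{K})$ (finite by compactness) and the total mass $\nu(\Delta(\mathbb{K}))$, while guaranteeing a definite lower bound on $\int_U p^2\, d\nu$. Because $U$ lies in the interior of the support of $\nu$ (which is full-dimensional Lebesgue measure on a set with nonempty interior, by virtue of $\mathbb{K}$ having nonempty interior in $\mathbb{R}^n$), a standard mollifier/partition-of-unity argument followed by polynomial approximation of the mollifier delivers the required $p$. Once this is in place, the three pieces combine to give $\mathcal{K}_{{\rm L},r}(\mathbb{K})\downarrow \COP(\mathbb{K})$.
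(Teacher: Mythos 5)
The paper does not actually prove this theorem: it is imported verbatim from Lasserre's work with only the citation to \cite[Sect.~2.4]{Lasserre2014}, so there is no in-paper proof to compare against. Your reconstruction is the standard argument behind that result and it is essentially correct. The identity $\bm{p}^\top\bm{M}_r(f_{\bm{A}}\bm{y})\bm{p}=\int_{\Delta(\mathbb{K})}p^2f_{\bm{A}}\,d\nu$ follows from linearity of the integral alone (the sums are finite, so no Fubini is needed); it gives the nestedness via principal submatrices, the closed-convex-cone property, and the inclusion $\COP(\mathbb{K})\subseteq\mathcal{K}_{{\rm L},r}(\mathbb{K})$ exactly as you say, and the converse is the familiar ``a continuous function that integrates nonnegatively against every $p^2\,d\nu$ is nonnegative on the support of $\nu$'' step, which on the compact base $\Delta(\mathbb{K})$ is correctly handled by your bump-function-plus-Stone--Weierstrass estimate: with $p$ uniformly $\delta$-close to a continuous $g$ satisfying $g\equiv1$ on a smaller ball $U'\subseteq U$ and $g\equiv0$ off $U$, one gets $\int p^2f_{\bm{A}}\,d\nu\le-\varepsilon(1-\delta)^2\nu(U')+\delta^2\sup_{\Delta(\mathbb{K})}|f_{\bm{A}}|\cdot\nu(\Delta(\mathbb{K}))<0$ for small $\delta$, since $U'$ has positive Lebesgue measure.

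One imprecision should be repaired. You claim that from $f_{\bm{A}}(\bm{x}_0)<0$ with $\bm{x}_0\in\mathbb{K}\setminus\{\bm{0}\}$ you ``may rescale so that $\bm{x}_0\in\setint(\Delta(\mathbb{K}))$.'' If $\bm{x}_0$ lies on the boundary of the cone $\mathbb{K}$, rescaling keeps it on the boundary, so this step fails as written. The fix is one sentence: since $f_{\bm{A}}$ is continuous and $\setint(\mathbb{K})$ is dense in $\mathbb{K}$ (a convex set with nonempty interior equals the closure of its interior), there is a point of $\setint(\mathbb{K})$ near $\bm{x}_0$ at which $f_{\bm{A}}<0$; rescale that point instead. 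With this adjustment the argument is complete.
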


\renewcommand{\arraystretch}{1.2}
\begin{table}[t]
\caption{Comparison of the number and size of semidefinite constraints defining the $r$th level of each approximation hierarchy $\{\mathcal{K}_r\}_r$ for $\COP(\mathbb{K})$}
\label{tab:comparison_approximation_hierarchy}
\centering
\begin{tabular}{llll}
\hline
Type     & Direction & Number                                                                                                            & Size                                                                                                                                                                                                                      \\
\hline
dP (proposed)      & Inner          & $|\mathbb{I}_{=r+2}^{\rk}|$ & $n_2$ \\
ZVP      & Inner          &  \multicolumn{2}{l}{
\begin{tabular}{l}
Includes at most $\rk$ semidefinite constraints of\\size $|\mathbb{I}^n_{=\lfloor \frac{r}{2}\rfloor+1}|$ (see Proposition~\ref{prop:ZVP_size_number} for details)
\end{tabular}
} \\
NN (proposed)  & Inner          & 1  & $|\mathbb{I}_{=r+2}^n|$                                                                                                                                                                                                                        \\
\hline
Y{\i}ld{\i}r{\i}m (proposed)& Outer & $|\delta_r^{\rk-1}|$ (see \eqref{eq:delta_Yildirim}) & $n_2$ \\
Lasserre & Outer          & 1 & $|\mathbb{I}_{\le r}^n|$\\
\hline
\end{tabular}
\end{table}
\renewcommand{\arraystretch}{1.0}
The above discussion indicates that the three (dP-, ZVP-, and NN-type) inner-approximation hierarchies and two (Y{\i}ld{\i}r{\i}m- and Lasserre-type) outer-approximation hierarchies for $\COP(\mathbb{K})$ are basically described by semidefinite constraints.
Table~\ref{tab:comparison_approximation_hierarchy} summarizes the approximation hierarchies, from which we can observe the characteristics of each approximation hierarchy.
In particular, the dP- and Y{\i}ld{\i}r{\i}m-type approximation hierarchies have features that differ from those of the ZVP-, NN-, and Lasserre-type approximation hierarchies.

First, the number of semidefinite constraints defining the dP- and Y{\i}ld{\i}r{\i}m-type approximation hierarchies is exponential in $r$ but depends only on $n_1$ and not on $n_2$ because $\rk = n_1 + 2$.
In addition, their size is linear in $n_2$.
Thus, they would not be affected much by the increase in $n_2$, and $n_1$ determines to extent to which depth parameter $r$ can be computationally increased.
Conversely, the other approximation hierarchies include semidefinite constraints whose maximum size is exponential in $r$ and dependent on $n = n_1 + n_2$.
Thus, they would be considerably affected by the increase in $n_2$ as well as in $n_1$.
The numerical experiment conducted in Sect.~\ref{sec:experiment} demonstrates this theoretical comparison.

Second, the dP- and Y{\i}ld{\i}r{\i}m-type approximation hierarchies are defined by multiple but small semidefinite constraints, which means that linear conic programming over these hierarchies can be reformulated as SDP with a block diagonal matrix structure.
In this case, we can conduct some of the operations in the primal-dual interior-point methods independently for each block~\cite{FKN1997}, thereby reducing the computational and spatial complexity.

\section{Numerical experiments}\label{sec:experiment}
In this section, we consider the following COPP problem with the COP cone $\COP(\mathbb{K})$ over $\mathbb{K} = \mathbb{R}_+^{n_1}\times \mathbb{L}^{n_2}$:
\begin{equation}
\begin{alignedat}{3}
&\maximize_{y,\bm{S}} && \quad y\\
&\subjectto && \quad y\bm{E}_n + \bm{S} = \bm{C},\\
&&& \quad \bm{S}\in \COP(\mathbb{K}),
\end{alignedat}\label{prob:COPP_dual}
\end{equation}
where $\bm{C}$ is a symmetric positive definite matrix.
Note that the dual problem of \eqref{prob:COPP_dual} is
\begin{equation}
\begin{alignedat}{3}
&\minimize_{\bm{X}} && \quad \langle \bm{C},\bm{X}\rangle\\
&\subjectto && \quad \langle\bm{E}_n,\bm{X}\rangle = 1,\\
&&& \quad \bm{X}\in \CP(\mathbb{K}).
\end{alignedat}\label{prob:COPP_main}
\end{equation}
Both \eqref{prob:COPP_dual} and its dual problem~\eqref{prob:COPP_main} satisfy Slater's condition; thus, \eqref{prob:COPP_dual} is ideal in a sense.

\begin{lemma}\label{lem:strong_feasible}
Both problems~\eqref{prob:COPP_dual} and ~\eqref{prob:COPP_main} satisfy Slater's condition, i.e., they have a feasible interior solution if $\bm{C}$ is a symmetric positive definite matrix.
\end{lemma}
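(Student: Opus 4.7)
The plan is to exhibit explicit Slater points for both \eqref{prob:COPP_dual} and \eqref{prob:COPP_main}, treating them separately since \eqref{prob:COPP_main}'s feasible set does not actually depend on $\bm{C}$.

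For \eqref{prob:COPP_dual}, I would take $y=0$ and $\bm{S}=\bm{C}$, which trivially satisfies the affine constraint, and show that $\bm{C}\in\setint\COP(\mathbb{K})$. Since $\bm{C}\succ \bm{O}$, $\bm{x}^\top\bm{C}\bm{x}>0$ for every $\bm{x}\in\mathbb{R}^n\setminus\{\bm{0}\}$, a fortiori for every $\bm{x}\in\mathbb{K}\setminus\{\bm{0}\}$. Because $\mathbb{K}\cap S^{n-1}$ is compact (as $\mathbb{K}$ is closed and pointed), the quantity $\lambda\coloneqq\min_{\bm{x}\in\mathbb{K}\cap S^{n-1}}\bm{x}^\top\bm{C}\bm{x}$ is attained and strictly positive. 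Any symmetric perturbation $\bm{E}$ with $\|\bm{E}\|_{\rm F}<\lambda$ keeps $\bm{x}^\top(\bm{C}+\bm{E})\bm{x}\ge(\lambda-\|\bm{E}\|_{\rm F})\|\bm{x}\|_2^2>0$ on $\mathbb{K}\setminus\{\bm{0}\}$, so $\bm{C}$ is interior.

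For \eqref{prob:COPP_main}, I would construct a point in $\setint\CP(\mathbb{K})$ and then normalize. Since $\mathbb{K}$ is a symmetric cone (hence pointed closed convex with nonempty interior), Proposition~\ref{prop:CPCOP}\eqref{enum:closed} gives that $\CP(\mathbb{K})$ is a closed convex cone and Corollary~\ref{cor:CPCOP_dual} gives that it is dual to $\COP(\mathbb{K})$. I first observe that $\COP(\mathbb{K})$ is pointed: if $\pm\bm{A}\in\COP(\mathbb{K})$ then the polynomial $\bm{x}^\top\bm{A}\bm{x}$ vanishes on $\mathbb{K}$, hence on the nonempty open set $\setint\mathbb{K}\subseteq\mathbb{R}^n$, and thus identically on $\mathbb{R}^n$, forcing $\bm{A}=\bm{O}$. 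Pointedness of $\COP(\mathbb{K})$ then yields the standard characterization
\begin{equation*}
\setint\CP(\mathbb{K})=\{\bm{X}\in\CP(\mathbb{K})\mid \langle\bm{A},\bm{X}\rangle>0 \text{ for all } \bm{A}\in\COP(\mathbb{K})\setminus\{\bm{O}\}\}.
\end{equation*}
I then fix any nonempty bounded open set $U\subseteq\setint\mathbb{K}$ and define $\bm{X}_0\coloneqq\int_U\bm{x}\bm{x}^\top d\bm{x}$. This integral is a limit of Riemann sums, each of which is a nonnegative combination of rank-one matrices $\bm{x}\bm{x}^\top$ with $\bm{x}\in\mathbb{K}$ and therefore lies in $\CP(\mathbb{K})$; by closedness of $\CP(\mathbb{K})$, $\bm{X}_0\in\CP(\mathbb{K})$. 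For any $\bm{A}\in\COP(\mathbb{K})\setminus\{\bm{O}\}$, the polynomial $\bm{x}^\top\bm{A}\bm{x}$ is nonnegative on $U$ and, again by the polynomial identity argument, cannot vanish identically on the open set $U$; hence $\langle\bm{A},\bm{X}_0\rangle=\int_U\bm{x}^\top\bm{A}\bm{x}\,d\bm{x}>0$, so $\bm{X}_0\in\setint\CP(\mathbb{K})$. Finally, because $\bm{x}^\top\bm{E}_n\bm{x}=(\bm{1}^\top\bm{x})^2\ge 0$ on $\mathbb{R}^n$, we have $\bm{E}_n\in\COP(\mathbb{K})\setminus\{\bm{O}\}$, so $c\coloneqq\langle\bm{E}_n,\bm{X}_0\rangle>0$; the normalization $\bm{X}_0/c$ is then an interior feasible point of \eqref{prob:COPP_main}.

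The main obstacle is the argument for \eqref{prob:COPP_main}: proving that $\bm{X}_0$ lies in the \emph{interior} of $\CP(\mathbb{K})$ rather than merely in $\CP(\mathbb{K})$. This requires both the dual characterization of the interior (hence pointedness of $\COP(\mathbb{K})$) and the polynomial-identity step that prevents a nonzero copositive form from vanishing on any open subset of $\mathbb{R}^n$. The primal side, by contrast, is a direct consequence of positive-definiteness plus compactness.
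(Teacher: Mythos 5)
Your proof is correct, and on the dual side it takes a genuinely different route from the paper. For \eqref{prob:COPP_dual} both you and the paper use $(y,\bm{S})=(0,\bm{C})$; you additionally spell out the compactness argument showing $\bm{C}\in\setint\COP(\mathbb{K})$, which the paper asserts without detail (a worthwhile addition). For \eqref{prob:COPP_main} the paper builds an explicit \emph{finite} Slater point: it lists vectors $\bm{u}_i^{(1)},\bm{u}_i^{(2)},\bm{u}_i^{(3)},\bm{u}^{(4)}\in\mathbb{K}$ that span $\mathbb{R}^n$, with $\bm{u}^{(4)}\in\setint(\mathbb{K})$, and invokes Theorem~3.3 of Gowda--Sznajder~\cite{GS2013} to conclude that $\sum_i\bm{u}_i\bm{u}_i^\top\in\setint\CP(\mathbb{K})$ before normalizing. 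You instead take $\bm{X}_0=\int_U\bm{x}\bm{x}^\top d\bm{x}$ over a bounded open $U\subseteq\setint\mathbb{K}$, prove pointedness of $\COP(\mathbb{K})$ via the polynomial identity theorem, and use the dual characterization $\setint(\mathcal{C}^*)=\{y\mid\langle x,y\rangle>0\ \forall x\in\mathcal{C}\setminus\{0\}\}$ together with positivity of the integral of a nonnegative, not-identically-zero form. Your argument is cone-agnostic (it works for any full-dimensional pointed closed convex cone, not just $\mathbb{R}_+^{n_1}\times\mathbb{L}^{n_2}$) and avoids the external citation, at the price of the measure-theoretic limiting step; the paper's construction is fully explicit and yields a concrete rational Slater point that can be used numerically. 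One trivial nit: compactness of $\mathbb{K}\cap S^{n-1}$ needs only closedness of $\mathbb{K}$, not pointedness.
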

\begin{proof}
Let $y_0 \coloneqq 0$ and $\bm{S}_0 \coloneqq \bm{C}$.
Then, $(y_0,\bm{S}_0)$ is a feasible interior solution of \eqref{prob:COPP_dual}.
Next, let
\begin{align*}
\bm{X}_0' &\coloneqq \begin{pmatrix}
\bm{E}_{n_1} + \bm{I}_{n_1} & \bm{1}_{n_1} & \bm{O}_{n_1\times (n_2-1)}\\
\bm{1}_{n_1}^\top & 2n_2+1 & \bm{0}_{n_2-1}^\top\\
\bm{O}_{(n_2-1)\times n_1} & \bm{0}_{n_2-1} & 2\bm{I}_{n_2-1}
\end{pmatrix},\\
\bm{X}_0 &\coloneqq \frac{1}{n_1^2+3n_1+4n_2-1}\bm{X}_0'
\end{align*}
and we prove that $\bm{X}_0$ is a feasible interior solution of problem~\eqref{prob:COPP_main}.
Let
\begin{align*}
\bm{u}_i^{(1)} &\coloneqq (0,\dots,0,\underbrace{1}_{\text{$i$th}},0\dots,0)\in \mathbb{K} && (i = 11,\dots,1n_1),\\
\bm{u}_i^{(2)} &\coloneqq (\bm{0}_{n_1},1,0,\dots,0,\underbrace{1}_{\text{$i$th}},0,\dots,0)\in \mathbb{K}&& (i = 22,\dots,2n_2),\\
\bm{u}_i^{(3)} &\coloneqq (\bm{0}_{n_1},1,0,\dots,0,\underbrace{-1}_{\text{$i$th}},0,\dots,0)\in \mathbb{K}&& (i = 22,\dots,2n_2),\\
\bm{u}^{(4)} &\coloneqq (\bm{1}_{n_1+1},\bm{0}_{n_2-1})\in \setint(\mathbb{K}).
\end{align*}
Then, they span $\mathbb{R}^n$ as each $\bm{x}\in\mathbb{R}^n$ can be written as
\begin{equation*}
\bm{x} = \sum_{i=11}^{1n_1}x_i\bm{u}_i^{(1)} + \frac{x_{21}}{2}(\bm{u}_{22}^{(2)}+\bm{u}_{22}^{(3)}) + \sum_{i=22}^{2n_2}\frac{x_i}{2}(\bm{u}_i^{(2)}-\bm{u}_i^{(3)}).
\end{equation*}
Therefore, it follows from~\cite[Theorem~3.3]{GS2013} that
\begin{align*}
\bm{X}_0' &=
\sum_{i=11}^{1n_1}\bm{u}_i^{(1)}(\bm{u}_i^{(1)})^\top + \sum_{i=22}^{2n_2}\bm{u}_i^{(2)}(\bm{u}_i^{(2)})^\top + \sum_{i=22}^{2n_2}\bm{u}_i^{(3)}(\bm{u}_i^{(3)})^\top + \bm{u}^{(4)}(\bm{u}^{(4)})^\top \\
&\in \setint\CP(\mathbb{K}).
\end{align*}
Given that $\langle \bm{E}_n,\bm{X}_0'\rangle = n_1^2+3n_1+4n_2-1 > 0$, we obtain $\langle \bm{E}_n,\bm{X}_0\rangle = 1$ and $\bm{X}_0 \in \setint\CP(\mathbb{K})$.
\end{proof}

All experiments in this section were conducted on a computer with an Intel Core i5-8279U 2.40 GHz CPU and 16 GB of memory.
The modeling language YALMIP~\cite{Lofberg2004} (version 20210331), the MOSEK solver~\cite{MOSEK} (version 9.3.3), and MATLAB (R2022a), were used to solve optimization problems.
Based on Lemma~\ref{lem:strong_feasible}, a coefficient matrix $\bm{C}$ in problem~\eqref{prob:COPP_dual} was randomly generated such that it was symmetric positive definite.
We measured three types of time when solving the optimization problems:
\begin{itemize}
\item preparetime: Time taken before calling YALMIP commands \verb|optimize| or \verb|solvesos|.
\item yalmiptime: Time between calling the above commands and beginning to solve an optimization problem in MOSEK.
\item solvertime: Time required to solve an optimization problem in MOSEK.
\end{itemize}
We defined the total time as the sum of the three types of time, and the calculation was considered invalid when the total time exceeded 7200~s.

\subsection{Comparison of approximation hierarchies}\label{subsec:compare_hierarchy}
Table~\ref{tab:comparison_approximation_hierarchy} lists five approximation hierarchies for $\COP(\mathbb{K})$ we have introduced.
Here, we solve optimization problems obtained by replacing the COP cone $\COP(\mathbb{K})$ in \eqref{prob:COPP_dual} with the approximation hierarchies.
For convenience, we hereafter call such problems dP-type approximation problems (of depth $r$), for example.
The YALMIP command \verb|optimize| was used when solving dP-, Y{\i}ld{\i}r{\i}m-, and Lasserre-type approximation problems, and \verb|solvesos| was used when solving ZVP- and NN-type approximation problems.
For each approximation hierarchy, we continuously increased the parameter $r$ that decides the depth of the hierarchy until the total time exceeded 7200~s.
When solving dP- and  Y{\i}ld{\i}r{\i}m-type approximation problems, the concise expressions mentioned in Sects.~\ref{subsubsec:easier_expression_dP} and~\ref{subsubsec:easier_expression_Yildirim} were adopted.
(Sect.~\ref{sect:effect_easier_expression} investigates their numerical effect.)
The pair $(n_1,n_2)$ was set to $(20,5)$, $(5,20)$, and $(5,25)$.

\begin{landscape}
\begin{table}
\caption{Optimal values (optv) of the optimization problems obtained by replacing the COP cone $\COP(\mathbb{R}_+^{20}\times \mathbb{L}^5)$ in \eqref{prob:COPP_dual} with each approximation hierarchy, as well as the  solver time (solt) and total time (tott) required to solve them.
All values are rounded to the second decimal place.
The asterisk * indicates that the total time exceeded 7200~s. Because more than 7200~s were required to solve the dP-type approximation problem of depth 6, only results up to depth $r\le 5$ are shown.}
\label{tab:compare_(n1,n2)=(20,5)}
\centering
\scriptsize
\begin{tabular}{rrrrcrrrcrrrcrrrcrrr}
\hline
  &   \multicolumn{3}{c}{dP}                                                                &  & \multicolumn{3}{c}{ZVP}                                                              &                      & \multicolumn{3}{c}{NN}                                                          &                      & \multicolumn{3}{c}{Y{\i}ld{\i}r{\i}m}                                                           &  & \multicolumn{3}{c}{Lasserre}                                                        \\
\cline{2-4} \cline{6-8} \cline{10-12} \cline{14-16} \cline{18-20}
\multicolumn{1}{c}{$r$} & \multicolumn{1}{c}{optv} & \multicolumn{1}{c}{solt} & \multicolumn{1}{c}{tott} &  & \multicolumn{1}{c}{optv}                   & \multicolumn{1}{c}{solt}                    & \multicolumn{1}{c}{tott}                     &                      & \multicolumn{1}{c}{optv}                   & \multicolumn{1}{c}{solt}                     & \multicolumn{1}{c}{tott}                     &                      & \multicolumn{1}{c}{optv} & \multicolumn{1}{c}{solt} & \multicolumn{1}{c}{tott} &  & \multicolumn{1}{c}{optv}                       & \multicolumn{1}{c}{solt}                  & \multicolumn{1}{c}{tott}                  \\
\hline
0                     & $-\infty$                  & 0.00                        & 0.72                        &                      & 1.52 & 0.04 & 4.20   &                      & 1.52 & 364.56 & 373.31 &                      & 6.09                       & 0.01                        & 0.48                        &                      & $+\infty$ & 0.00 & 0.30    \\
1                     & $-\infty$                  & 0.02                        & 3.24                        &                      & 1.52 & 4.09 & 234.49 &                      &\multicolumn{1}{c}{*}                       &\multicolumn{1}{c}{*}                         &\multicolumn{1}{c}{*}                         &                      & 5.14                       & 0.06                        & 3.53                        &                      & $+\infty$ & 0.00 & 24.85   \\
2                     & $-0.22$                      & 0.16                        & 18.87                       &                      &\multicolumn{1}{c}{*}                       &\multicolumn{1}{c}{*}                       &\multicolumn{1}{c}{*}                         &                      &\multicolumn{1}{c}{*}                       &\multicolumn{1}{c}{*}                         &\multicolumn{1}{c}{*}                         &                      & 4.42                       & 0.36                        & 23.99                       &                      & $+\infty$ & 0.10 & 4110.63 \\
3                     & 0.08                       & 1.85                        & 106.15                      &                      &\multicolumn{1}{c}{*}                       &\multicolumn{1}{c}{*}                       &\multicolumn{1}{c}{*}                         &                      &\multicolumn{1}{c}{*}                       &\multicolumn{1}{c}{*}                         &\multicolumn{1}{c}{*}                         &                      & 3.90                       & 2.09                        & 151.11                      &                      &\multicolumn{1}{c}{*}                           &\multicolumn{1}{c}{*}                       &\multicolumn{1}{c}{*}                          \\
4                     & 0.18                       & 32.60                       & 693.39                      &                      &\multicolumn{1}{c}{*}                       &\multicolumn{1}{c}{*}                       &\multicolumn{1}{c}{*}                         &                      &\multicolumn{1}{c}{*}                       &\multicolumn{1}{c}{*}                         &\multicolumn{1}{c}{*}                         &                      & 3.49                       & 12.62                       & 1200.10                     &                      &\multicolumn{1}{c}{*}                           &\multicolumn{1}{c}{*}                       &\multicolumn{1}{c}{*}                          \\
5                     & 0.26                       & 513.54                      & 6357.99                     &                      &\multicolumn{1}{c}{*}                       &\multicolumn{1}{c}{*}                       &\multicolumn{1}{c}{*}                         &                      &\multicolumn{1}{c}{*}                       &\multicolumn{1}{c}{*}                         &\multicolumn{1}{c}{*}                         &                      & \multicolumn{1}{c}{*}      & \multicolumn{1}{c}{*}       & \multicolumn{1}{c}{*}       &                      &\multicolumn{1}{c}{*}                           &\multicolumn{1}{c}{*}                       & \multicolumn{1}{c}{*}\\
\hline
\end{tabular}
\end{table}
\end{landscape}

\begin{landscape}
\begin{table}
\caption{Optimal values (optv) of the optimization problems obtained by replacing the COP cone $\COP(\mathbb{R}_+^5\times \mathbb{L}^{20})$ in \eqref{prob:COPP_dual} with each approximation hierarchy, as well as the solver time (solt) and total time (tott) required to solve them.
All values are rounded to the second decimal place.
The asterisk * indicates that the total time exceeded 7200~s. Because more than 7200~s were required to solve the dP-type approximation problem of depth 17, only results up to depth $r\le 16$ are shown.}
\label{tab:compare_(n1,n2)=(5,20)}
\centering
\scriptsize
\begin{tabular}{rrrrcrrrcrrrcrrrcrrr}
\hline
  & \multicolumn{3}{c}{dP}                                                                 &  & \multicolumn{3}{c}{ZVP}                                                              &                      & \multicolumn{3}{c}{NN}                                                          &                      & \multicolumn{3}{c}{Y{\i}ld{\i}r{\i}m}                                                           &  & \multicolumn{3}{c}{Lasserre}                                                        \\
\cline{2-4} \cline{6-8} \cline{10-12} \cline{14-16} \cline{18-20}
\multicolumn{1}{c}{$r$} & \multicolumn{1}{c}{optv} & \multicolumn{1}{c}{solt} & \multicolumn{1}{c}{tott} &  & \multicolumn{1}{c}{optv}                   & \multicolumn{1}{c}{solt}                    & \multicolumn{1}{c}{tott}                     &                      & \multicolumn{1}{c}{optv}                   & \multicolumn{1}{c}{solt}                     & \multicolumn{1}{c}{tott}                     &                      & \multicolumn{1}{c}{optv} & \multicolumn{1}{c}{solt} & \multicolumn{1}{c}{tott} &  & \multicolumn{1}{c}{optv}                       & \multicolumn{1}{c}{solt}                  & \multicolumn{1}{c}{tott}                  \\
\hline
0                     & $-\infty$                  & 0.00                        & 0.23                        &                      & 1.79                       & 0.03                        & 1.17                        &  & 1.79                       & 270.90                      & 280.22                      & \multicolumn{1}{l}{} & 2.25                       & 0.01                        & 0.18                        &                      & $+\infty$                   & 0.00                        & 0.35                        \\
1                     & $-\infty$                  & 0.02                        & 0.26                        &                      & 1.79                       & 0.30                        & 4.88                        &  & \multicolumn{1}{c}{*}                         & \multicolumn{1}{c}{*}                          & \multicolumn{1}{c}{*}                          &                      & 2.25                       & 0.03                        & 0.36                        &                      & $+\infty$                   & 0.00                        & 18.29                       \\
2                     & $-\infty$                  & 0.05                        & 0.62                        &                      & 1.79                       & 569.21                      & 751.10                      &  & \multicolumn{1}{c}{*}                         & \multicolumn{1}{c}{*}                          & \multicolumn{1}{c}{*}                          &                      & 2.18                       & 0.09                        & 0.84                        &                      & $+\infty$                   & 0.11                        & 2781.12                     \\
3                     & $-\infty$                  & 0.15                        & 1.32                        &                      & \multicolumn{1}{c}{*}                         & \multicolumn{1}{c}{*}                          & \multicolumn{1}{c}{*}                          &                      & \multicolumn{1}{c}{*}                         & \multicolumn{1}{c}{*}                          & \multicolumn{1}{c}{*}                          &                      & 2.10                       & 0.23                        & 2.06                        &                      & \multicolumn{1}{c}{*}                         & \multicolumn{1}{c}{*}                          & \multicolumn{1}{c}{*}                          \\
4                     & $-0.11$                      & 0.37                        & 2.77                        &                      & \multicolumn{1}{c}{*}                         & \multicolumn{1}{c}{*}                          & \multicolumn{1}{c}{*}                          &                      & \multicolumn{1}{c}{*}                         & \multicolumn{1}{c}{*}                          & \multicolumn{1}{c}{*}                          &                      & 2.02                       & 0.52                        & 4.42                        &                      & \multicolumn{1}{c}{*}                         & \multicolumn{1}{c}{*}                          & \multicolumn{1}{c}{*}                          \\
5                     & 0.40                       & 0.65                        & 4.95                        &                      & \multicolumn{1}{c}{*}                         & \multicolumn{1}{c}{*}                          & \multicolumn{1}{c}{*}                          &                      & \multicolumn{1}{c}{*}                         & \multicolumn{1}{c}{*}                          & \multicolumn{1}{c}{*}                          &                      & 1.95                       & 1.05                        & 10.19                       &                      & \multicolumn{1}{c}{*}                         & \multicolumn{1}{c}{*}                          & \multicolumn{1}{c}{*}                          \\
6                     & 0.91                       & 1.30                        & 9.33                        &                      & \multicolumn{1}{c}{*}                         & \multicolumn{1}{c}{*}                          & \multicolumn{1}{c}{*}                          &                      & \multicolumn{1}{c}{*}                         & \multicolumn{1}{c}{*}                          & \multicolumn{1}{c}{*}                          &                      & 1.87                       & 2.07                        & 22.01                       &                      & \multicolumn{1}{c}{*}                         & \multicolumn{1}{c}{*}                          & \multicolumn{1}{c}{*}                          \\
7                     & 1.19                       & 2.12                        & 17.04                       &                      & \multicolumn{1}{c}{*}                         & \multicolumn{1}{c}{*}                          & \multicolumn{1}{c}{*}                          &                      & \multicolumn{1}{c}{*}                         & \multicolumn{1}{c}{*}                          & \multicolumn{1}{c}{*}                          &                      & 1.81                       & 3.94                        & 49.55                       &                      & \multicolumn{1}{c}{*}                         & \multicolumn{1}{c}{*}                          & \multicolumn{1}{c}{*}                          \\
8                     & 1.30                       & 3.89                        & 32.91                       &                      & \multicolumn{1}{c}{*}                         & \multicolumn{1}{c}{*}                          & \multicolumn{1}{c}{*}                          &                      & \multicolumn{1}{c}{*}                         & \multicolumn{1}{c}{*}                          & \multicolumn{1}{c}{*}                          &                      & 1.79                       & 7.33                        & 110.76                      &                      & \multicolumn{1}{c}{*}                         & \multicolumn{1}{c}{*}                          & \multicolumn{1}{c}{*}                          \\
9                     & 1.37                       & 6.23                        & 62.80                       &                      & \multicolumn{1}{c}{*}                         & \multicolumn{1}{c}{*}                          & \multicolumn{1}{c}{*}                          &                      & \multicolumn{1}{c}{*}                         & \multicolumn{1}{c}{*}                          & \multicolumn{1}{c}{*}                          &                      & 1.79                       & 13.33                       & 261.56                      &                      & \multicolumn{1}{c}{*}                         & \multicolumn{1}{c}{*}                          & \multicolumn{1}{c}{*}                          \\
10                    & 1.42                       & 9.46                        & 118.91                      &                      & \multicolumn{1}{c}{*}                         & \multicolumn{1}{c}{*}                          & \multicolumn{1}{c}{*}                          &                      & \multicolumn{1}{c}{*}                         & \multicolumn{1}{c}{*}                          & \multicolumn{1}{c}{*}                          &                      & 1.79                       & 21.98                       & 603.44                      &                      & \multicolumn{1}{c}{*}                         & \multicolumn{1}{c}{*}                          & \multicolumn{1}{c}{*}                          \\
11                    & 1.45                       & 15.59                       & 230.01                      &                      & \multicolumn{1}{c}{*}                         & \multicolumn{1}{c}{*}                          & \multicolumn{1}{c}{*}                          &                      & \multicolumn{1}{c}{*}                         & \multicolumn{1}{c}{*}                          & \multicolumn{1}{c}{*}                          &                      & 1.79                       & 40.07                       & 1491.77                     &                      & \multicolumn{1}{c}{*}                         & \multicolumn{1}{c}{*}                          & \multicolumn{1}{c}{*}                          \\
12                    & 1.47                       & 22.72                       & 440.28                      &                      & \multicolumn{1}{c}{*}                         & \multicolumn{1}{c}{*}                          & \multicolumn{1}{c}{*}                          &                      & \multicolumn{1}{c}{*}                         & \multicolumn{1}{c}{*}                          & \multicolumn{1}{c}{*}                          &                      & 1.79                       & 63.83                       & 3266.76                     &                      & \multicolumn{1}{c}{*}                         & \multicolumn{1}{c}{*}                          & \multicolumn{1}{c}{*}                          \\
13                    & 1.48                       & 38.53                       & 889.31                      &                      & \multicolumn{1}{c}{*}                         & \multicolumn{1}{c}{*}                          & \multicolumn{1}{c}{*}                          &                      & \multicolumn{1}{c}{*}                         & \multicolumn{1}{c}{*}                          & \multicolumn{1}{c}{*}                          &                      & 1.79                       & 99.29                       & 7158.94                     & \multicolumn{1}{c}{} & \multicolumn{1}{c}{*}                         & \multicolumn{1}{c}{*}                          & \multicolumn{1}{c}{*}                          \\
14                    & 1.49                       & 49.62                       & 1642.49                     &                      & \multicolumn{1}{c}{*}                         & \multicolumn{1}{c}{*}                          & \multicolumn{1}{c}{*}                          &                      & \multicolumn{1}{c}{*}                         & \multicolumn{1}{c}{*}                          & \multicolumn{1}{c}{*}                          &                      & \multicolumn{1}{c}{*}                         & \multicolumn{1}{c}{*}                          & \multicolumn{1}{c}{*}                          & \multicolumn{1}{c}{} & \multicolumn{1}{c}{*}                         & \multicolumn{1}{c}{*}                          & \multicolumn{1}{c}{*}                          \\
15                    & 1.50                       & 69.87                       & 2987.17                     &                      & \multicolumn{1}{c}{*}                         & \multicolumn{1}{c}{*}                          & \multicolumn{1}{c}{*}                          &                      & \multicolumn{1}{c}{*}                         & \multicolumn{1}{c}{*}                          & \multicolumn{1}{c}{*}                          &                      & \multicolumn{1}{c}{*}                         & \multicolumn{1}{c}{*}                          & \multicolumn{1}{c}{*}                          & \multicolumn{1}{c}{} & \multicolumn{1}{c}{*}                         & \multicolumn{1}{c}{*}                          & \multicolumn{1}{c}{*}                          \\
16                    & 1.52                       & 101.21                      & 5274.74                     & \multicolumn{1}{l}{} & \multicolumn{1}{c}{*}                         & \multicolumn{1}{c}{*}                          & \multicolumn{1}{c}{*}                          & \multicolumn{1}{l}{} & \multicolumn{1}{c}{*}                         & \multicolumn{1}{c}{*}                          & \multicolumn{1}{c}{*}                          & \multicolumn{1}{l}{} & \multicolumn{1}{c}{*}                         & \multicolumn{1}{c}{*}                          & \multicolumn{1}{c}{*}                          & \multicolumn{1}{l}{} & \multicolumn{1}{c}{*}                         & \multicolumn{1}{c}{*}                          & \multicolumn{1}{c}{*} \\
\hline
\end{tabular}
\end{table}
\end{landscape}

\begin{landscape}
\begin{table}
\centering
\caption{Optimal values (optv) of the optimization problems obtained by replacing the COP cone $\COP(\mathbb{R}_+^{5}\times \mathbb{L}^{25})$ in \eqref{prob:COPP_dual} with each approximation hierarchy, as well as the solver time (solt) and total time (tott) required to solve them.
All values are rounded to the second decimal place.
The asterisk * indicates that the total time exceeded 7200~s. Because more than 7200~s were required to solve the dP-type approximation problem of depth 16, only results up to depth $r\le 15$ are shown.}
\label{tab:compare_(n1,n2)=(5,25)}
\scriptsize
\begin{tabular}{rrrrcrrrcrrrcrrrcrrr}
\hline
  & \multicolumn{3}{c}{dP}                                                                 &  & \multicolumn{3}{c}{ZVP}                                                              &                      & \multicolumn{3}{c}{NN}                                                          &                      & \multicolumn{3}{c}{Y{\i}ld{\i}r{\i}m}                                                           &  & \multicolumn{3}{c}{Lasserre}                                                        \\
\cline{2-4} \cline{6-8} \cline{10-12} \cline{14-16} \cline{18-20}
\multicolumn{1}{c}{$r$} & \multicolumn{1}{c}{optv} & \multicolumn{1}{c}{solt} & \multicolumn{1}{c}{tott} &  & \multicolumn{1}{c}{optv}                   & \multicolumn{1}{c}{solt}                    & \multicolumn{1}{c}{tott}                     &                      & \multicolumn{1}{c}{optv}                   & \multicolumn{1}{c}{solt}                     & \multicolumn{1}{c}{tott}                     &                      & \multicolumn{1}{c}{optv} & \multicolumn{1}{c}{solt} & \multicolumn{1}{c}{tott} &  & \multicolumn{1}{c}{optv}                       & \multicolumn{1}{c}{solt}                  & \multicolumn{1}{c}{tott}                  \\
\hline
0                     & $-\infty$                  & 0.02                        & 1.14                        &                      & 1.83 & 0.06    & 1.35    &  & 1.83 & 3864.82 & 3904.27 &  & 2.23                       & 0.01                        & 0.18                        &                      & $+\infty$ & 0.00 & 0.33  \\
1                     & $-\infty$                  & 0.02                        & 0.47                        &                      & 1.83 & 0.47    & 5.59    &  &\multicolumn{1}{c}{*}                       &\multicolumn{1}{c}{*}                          &\multicolumn{1}{c}{*}                          &                      & 2.23                       & 0.05                        & 0.38                        &                      & $+\infty$ & 0.00 & 37.90 \\
2                     & $-\infty$                  & 0.09                        & 0.76                        &                      & 1.83 & 6126.23 & 6587.04 &  &\multicolumn{1}{c}{*}                       &\multicolumn{1}{c}{*}                          &\multicolumn{1}{c}{*}                          &                      & 2.18                       & 0.12                        & 0.89                        &                      &\multicolumn{1}{c}{*}                           &\multicolumn{1}{c}{*}                       &\multicolumn{1}{c}{*}                        \\
3                     & $-\infty$                  & 0.17                        & 1.55                        &                      &\multicolumn{1}{c}{*}                       &\multicolumn{1}{c}{*}                          &\multicolumn{1}{c}{*}                          &                      &\multicolumn{1}{c}{*}                       &\multicolumn{1}{c}{*}                          &\multicolumn{1}{c}{*}                          &                      & 2.13                       & 0.38                        & 2.30                        &                      &\multicolumn{1}{c}{*}                           &\multicolumn{1}{c}{*}                       &\multicolumn{1}{c}{*}                        \\
4                     & $-\infty$                  & 0.45                        & 3.85                        &                      &\multicolumn{1}{c}{*}                       &\multicolumn{1}{c}{*}                          &\multicolumn{1}{c}{*}                          &                      &\multicolumn{1}{c}{*}                       &\multicolumn{1}{c}{*}                          &\multicolumn{1}{c}{*}                          &                      & 2.06                       & 0.68                        & 5.00                        &                      &\multicolumn{1}{c}{*}                           &\multicolumn{1}{c}{*}                       &\multicolumn{1}{c}{*}                        \\
5                     & 0.25                       & 1.24                        & 6.70                        &                      &\multicolumn{1}{c}{*}                       &\multicolumn{1}{c}{*}                          &\multicolumn{1}{c}{*}                          &                      &\multicolumn{1}{c}{*}                       &\multicolumn{1}{c}{*}                          &\multicolumn{1}{c}{*}                          &                      & 1.98                       & 1.65                        & 12.36                       &                      &\multicolumn{1}{c}{*}                           &\multicolumn{1}{c}{*}                       &\multicolumn{1}{c}{*}                        \\
6                     & 0.95                       & 2.56                        & 13.70                       &                      &\multicolumn{1}{c}{*}                       &\multicolumn{1}{c}{*}                          &\multicolumn{1}{c}{*}                          &                      &\multicolumn{1}{c}{*}                       &\multicolumn{1}{c}{*}                          &\multicolumn{1}{c}{*}                          &                      & 1.92                       & 3.18                        & 28.62                       &                      &\multicolumn{1}{c}{*}                           &\multicolumn{1}{c}{*}                       &\multicolumn{1}{c}{*}                        \\
7                     & 1.22                       & 4.02                        & 25.25                       &                      &\multicolumn{1}{c}{*}                       &\multicolumn{1}{c}{*}                          &\multicolumn{1}{c}{*}                          &                      &\multicolumn{1}{c}{*}                       &\multicolumn{1}{c}{*}                          &\multicolumn{1}{c}{*}                          &                      & 1.87                       & 5.71                        & 69.53                       &                      &\multicolumn{1}{c}{*}                           &\multicolumn{1}{c}{*}                       &\multicolumn{1}{c}{*}                        \\
8                     & 1.33                       & 5.73                        & 48.05                       &                      &\multicolumn{1}{c}{*}                       &\multicolumn{1}{c}{*}                          &\multicolumn{1}{c}{*}                          &                      &\multicolumn{1}{c}{*}                       &\multicolumn{1}{c}{*}                          &\multicolumn{1}{c}{*}                          &                      & 1.84                       & 12.62                       & 173.91                      &                      &\multicolumn{1}{c}{*}                           &\multicolumn{1}{c}{*}                       &\multicolumn{1}{c}{*}                        \\
9                     & 1.41                       & 11.50                       & 102.73                      &                      &\multicolumn{1}{c}{*}                       &\multicolumn{1}{c}{*}                          &\multicolumn{1}{c}{*}                          &                      &\multicolumn{1}{c}{*}                       &\multicolumn{1}{c}{*}                          &\multicolumn{1}{c}{*}                          &                      & 1.83                       & 22.40                       & 431.75                      &                      &\multicolumn{1}{c}{*}                           &\multicolumn{1}{c}{*}                       &\multicolumn{1}{c}{*}                        \\
10                    & 1.46                       & 15.19                       & 199.37                      &                      &\multicolumn{1}{c}{*}                       &\multicolumn{1}{c}{*}                          &\multicolumn{1}{c}{*}                          &                      &\multicolumn{1}{c}{*}                       &\multicolumn{1}{c}{*}                          &\multicolumn{1}{c}{*}                          &                      & 1.83                       & 38.45                       & 1076.99                     &                      &\multicolumn{1}{c}{*}                           &\multicolumn{1}{c}{*}                       &\multicolumn{1}{c}{*}                        \\
11                    & 1.49                       & 23.34                       & 400.18                      &                      &\multicolumn{1}{c}{*}                       &\multicolumn{1}{c}{*}                          &\multicolumn{1}{c}{*}                          &                      &\multicolumn{1}{c}{*}                       &\multicolumn{1}{c}{*}                          &\multicolumn{1}{c}{*}                          &                      & 1.83                       & 64.26                       & 2689.87                     &                      &\multicolumn{1}{c}{*}                           &\multicolumn{1}{c}{*}                       &\multicolumn{1}{c}{*}                        \\
12                    & 1.52                       & 39.72                       & 780.18                      &                      &\multicolumn{1}{c}{*}                       &\multicolumn{1}{c}{*}                          &\multicolumn{1}{c}{*}                          &                      &\multicolumn{1}{c}{*}                       &\multicolumn{1}{c}{*}                          &\multicolumn{1}{c}{*}                          &                      & 1.83                       & 94.82                       & 6229.90                     &                      &\multicolumn{1}{c}{*}                           &\multicolumn{1}{c}{*}                       &\multicolumn{1}{c}{*}                        \\
13                    & 1.53                       & 57.08                       & 1571.46                     &                      &\multicolumn{1}{c}{*}                       &\multicolumn{1}{c}{*}                          &\multicolumn{1}{c}{*}                          &                      &\multicolumn{1}{c}{*}                       &\multicolumn{1}{c}{*}                          &\multicolumn{1}{c}{*}                          &                      & \multicolumn{1}{c}{*}      & \multicolumn{1}{c}{*}       & \multicolumn{1}{c}{*}       &  &\multicolumn{1}{c}{*}                           &\multicolumn{1}{c}{*}                       &\multicolumn{1}{c}{*}                        \\
14                    & 1.54                       & 81.10                       & 2922.42                     &                      &\multicolumn{1}{c}{*}                       &\multicolumn{1}{c}{*}                          &\multicolumn{1}{c}{*}                          &                      &\multicolumn{1}{c}{*}                       &\multicolumn{1}{c}{*}                          &\multicolumn{1}{c}{*}                          &                      & \multicolumn{1}{c}{*}      & \multicolumn{1}{c}{*}       & \multicolumn{1}{c}{*}       &  &\multicolumn{1}{c}{*}                           &\multicolumn{1}{c}{*}                       &\multicolumn{1}{c}{*}                        \\
15                    & 1.55                       & 113.98                      & 5315.38                     &                      &\multicolumn{1}{c}{*}                       &\multicolumn{1}{c}{*}                          &\multicolumn{1}{c}{*}                          &                      &\multicolumn{1}{c}{*}                       &\multicolumn{1}{c}{*}                          &\multicolumn{1}{c}{*}                          &                      & \multicolumn{1}{c}{*}      & \multicolumn{1}{c}{*}       & \multicolumn{1}{c}{*}       &  &\multicolumn{1}{c}{*}                           &\multicolumn{1}{c}{*}                       &\multicolumn{1}{c}{*}\\
\hline
\end{tabular}
\end{table}
\end{landscape}

Tables~\ref{tab:compare_(n1,n2)=(20,5)},~\ref{tab:compare_(n1,n2)=(5,20)}, and~\ref{tab:compare_(n1,n2)=(5,25)} show the results of $(n_1,n_2) = (20,5)$, $(5,20)$, and $(5,25)$, respectively.
These tables report the solver and total time because some of the approximation hierarchies spent most of the total time before beginning to solve optimization problems in MOSEK.
Although we used YALMIP for convenience, the total time would be substantially reduced if we implemented these approximation hierarchies directly.

As shown in Tables~\ref{tab:compare_(n1,n2)=(5,20)} and~\ref{tab:compare_(n1,n2)=(5,25)}, the optimal values of the ZVP- and NN-type inner-approximation problems agree with those of the Y{\i}ld{\i}r{\i}m-type outer-approximation problems, which implies that the ZVP-, NN-, and Y{\i}ld{\i}r{\i}m-type approximation hierarchies almost approach the optimal value of the original COPP problem \eqref{prob:COPP_dual} for $(n_1,n_2) = (5,20)$ and $(5,25)$.

Although all Lasserre-type outer-approximation problems were considered unbounded, this would result from the moments \eqref{eq:moment} taking infinitesimal values.
Indeed, the MOSEK solver provided a warning about treating nearly zero elements, and we determined that $y_{\bm{\alpha}}$ is approximately $3.52\times 10^{-37}$ for $(n_1,n_2) = (5,25)$ and $\bm{\alpha} = 2\bm{e}_1\in \mathbb{R}^{30}$, for instance.
Hence, the Lasserre-type outer-approximation hierarchy is numerically unstable, whereas the Y{\i}ld{\i}r{\i}m-type outer-approximation hierarchy is numerically stable.

The results of this numerical experiment support the theoretical comparison mentioned in Sect.~\ref{sec:compare_hierarchy}.
As shown in Tables~\ref{tab:compare_(n1,n2)=(5,20)} and~\ref{tab:compare_(n1,n2)=(5,25)}, the dP- and Y{\i}ld{\i}r{\i}m-type approximation hierarchies are not affected much by the increase in $n_2$.
The solver and total time required to solve the dP- and Y{\i}ld{\i}r{\i}m-type approximation problems with $(n_1,n_2) = (5,25)$ is less than twice as long as those with $(n_1,n_2) = (5,20)$ regardless of $r$, except for $r = 0$.
Conversely, the others are considerably affected by the increase in $n_2$.
For example, the solver and total time required to solve the NN-type inner-approximation problem with $(n_1,n_2) = (5,25)$ of depth 0 is more than ten times as long as those with $(n_1,n_2) = (5,20)$.
In addition, although the increase for $r = 0,1$ is mild, those required to solve the ZVP-type inner-approximation problem with $(n_1,n_2) = (5,25)$ of depth 2 is also approximately ten times as long as those with $(n_1,n_2) = (5,20)$.
\footnote{This is because, as shown in Table~\ref{tab:comparison_approximation_hierarchy}, the maximum size of the semidefinite constraints defining the ZVP-type inner-approximation hierarchy increases from $|\mathbb{I}_{=1}^n| = n$ to $|\mathbb{I}_{=2}^n| = n(n+1)/2$ when $r$ increases from 1 to 2.}

As shown in Tables~\ref{tab:compare_(n1,n2)=(20,5)} and~\ref{tab:compare_(n1,n2)=(5,20)}, the dP- and Y{\i}ld{\i}r{\i}m-type approximation hierarchies are considerably affected by the increase in $n_1$.
The solver and total time required to solve the dP- and Y{\i}ld{\i}r{\i}m-type approximation problems with $(n_1,n_2) = (20,5)$ is longer than those with $(n_1,n_2) = (5,20)$ for each $r$, and the difference rapidly increases with $r$.
Because $n = n_1 + n_2$ is the same for the two pairs and because $n_2$ in the pair $(n_1,n_2) = (20,5)$ is smaller than that in the pair $(n_1,n_2) = (5,20)$, we can conclude that the increase in the required time results from the increase in $n_1$.
Conversely, if $n_1$ is small, we can increase depth parameter $r$ and, in this case, the Y{\i}ld{\i}r{\i}m-type outer-approximation hierarchy may approach a nearly optimal value of the COPP problems.

Finally, the ZVP- and NN-type inner-approximation hierarchies provided much tighter bounds than that of the dP-type in all cases, and, as mentioned above, the two hierarchies are guaranteed to approach nearly optimal values even at a depth of 0 for $(n_1,n_2) = (5,20)$ and $(5,25)$.
Moreover, the time required to solve the ZVP-type inner-approximation problems of depth 0 is much shorter than that for the NN-type ones.
Therefore, from a practical perspective, using the zeroth level of the ZVP-type inner-approximation hierarchy may be preferable if aiming to obtain a reasonable lower bound of the optimal value of the original problem \eqref{prob:COPP_dual}.

\subsection{Effect of concise expressions of dP- and Y{\i}ld{\i}r{\i}m-type approximation hierarchies}\label{sect:effect_easier_expression}
In this subsection, we investigate the numerical effect of the concise expressions of the dP- and Y{\i}ld{\i}r{\i}m-type approximation hierarchies mentioned in Sects.~\ref{subsubsec:easier_expression_dP} and~\ref{subsubsec:easier_expression_Yildirim}.
The dP- and Y{\i}ld{\i}r{\i}m-type approximation hierarchies with the full expressions are provided by \eqref{eq:characterization_C} and \eqref{eq:characterization_O}, respectively.
Except for the differences in the expressions of these approximation hierarchies, the experimental settings were the same as those in Sect.~\ref{subsec:compare_hierarchy}.

\begin{table}[t]
\centering
\caption{Solver time (solt) and total time (tott) required to solve the optimization problems obtained by replacing the COP cone $\COP(\mathbb{R}_+^{5}\times \mathbb{L}^{25})$ in \eqref{prob:COPP_dual} with the dP- and  Y{\i}ld{\i}r{\i}m-type approximation hierarchies with the full expressions.
The ``solt ratio" (resp. ``tott ratio") column lists values obtained by dividing ``solt" (resp. ``tott") in this table (the case of not making the expressions more concise) by that in Table~\ref{tab:compare_(n1,n2)=(5,25)} (the case of making the expressions more concise).
The asterisk * indicates that the total time exceeded 7200~s, and the symbol $+\infty$ in the columns of ``solt ratio" and ``tott ratio" indicates that the total time in this table exceeded 7200~s, whereas that in Table~\ref{tab:compare_(n1,n2)=(5,25)} did not.
The ratios that are more than 1 are in bold.
All values are rounded to the second decimal place.}\label{tab:effect_(n1,n2)=(5,25)}
\footnotesize
\begin{tabular}{rrrrrcrrrr}
\hline
  & \multicolumn{4}{c}{dP}                                                                                                &  & \multicolumn{4}{c}{Y{\i}ld{\i}r{\i}m}                                                                                          \\
\cline{2-5} \cline{7-10}
\multicolumn{1}{c}{$r$} & \multicolumn{1}{c}{solt} & \multicolumn{1}{c}{solt ratio} & \multicolumn{1}{c}{tott} & \multicolumn{1}{c}{tott ratio} &  & \multicolumn{1}{c}{solt} & \multicolumn{1}{c}{solt ratio} & \multicolumn{1}{c}{tott} & \multicolumn{1}{c}{tott ratio} \\
\hline
0                     & 0.03                     & \textbf{1.23}                  & 0.71                     & 0.62                  &  & 0.04                     & \textbf{2.97}                  & 0.27                     & \textbf{1.48}                  \\
1                     & 0.08                     & \textbf{3.63}                  & 0.77                     & \textbf{1.64}                  &  & 0.11                     & \textbf{2.34}                  & 0.78                     & \textbf{2.06}                  \\
2                     & 0.26                     & \textbf{2.95}                  & 1.82                     & \textbf{2.40}                  &  & 0.30                     & \textbf{2.58}                  & 2.03                     & \textbf{2.27}                  \\
3                     & 0.57                     & \textbf{3.34}                  & 4.47                     & \textbf{2.88}                  &  & 0.98                     & \textbf{2.60}                  & 5.65                     & \textbf{2.46}                  \\
4                     & 1.17                     & \textbf{2.57}                  & 7.45                     & \textbf{1.93}                  &  & 1.96                     & \textbf{2.86}                  & 13.37                    & \textbf{2.67}                  \\
5                     & 2.34                     & \textbf{1.89}                  & 15.04                    & \textbf{2.25}                  &  & 4.44                     & \textbf{2.70}                  & 36.01                    & \textbf{2.91}                  \\
6                     & 5.17                     & \textbf{2.02}                  & 33.12                    & \textbf{2.42}                  &  & 8.57                     & \textbf{2.70}                  & 92.24                    & \textbf{3.22}                  \\
7                     & 7.81                     & \textbf{1.94}                  & 65.96                    & \textbf{2.61}                  &  & 15.44                    & \textbf{2.71}                  & 258.37                   & \textbf{3.72}                  \\
8                     & 12.25                    & \textbf{2.14}                  & 143.70                   & \textbf{2.99}                  &  & 32.80                    & \textbf{2.60}                  & 700.01                   & \textbf{4.03}                  \\
9                     & 19.66                    & \textbf{1.71}                  & 304.94                   & \textbf{2.97}                  &  & 57.39                    & \textbf{2.56}                  & 1893.38                  & \textbf{4.39}                  \\
10                    & 34.48                    & \textbf{2.27}                  & 690.06                   & \textbf{3.46}                  &  & 95.71                    & \textbf{2.49}                  & 4710.17                  & \textbf{4.37}                  \\
11                    & 50.02                    & \textbf{2.14}                  & 1465.11                  & \textbf{3.66}                  &  & \multicolumn{1}{c}{*}    & \bm{$+\infty$}                 & \multicolumn{1}{c}{*}    & \bm{$+\infty$}                 \\
12                    & 71.78                    & \textbf{1.81}                  & 2943.20                  & \textbf{3.77}                  &  & \multicolumn{1}{c}{*}    & \bm{$+\infty$}                 & \multicolumn{1}{c}{*}    & \bm{$+\infty$}                 \\
13                    & 108.49                   & \textbf{1.90}                  & 5723.38                  & \textbf{3.64}                  &  & \multicolumn{1}{c}{*}    & \multicolumn{1}{c}{*}          & \multicolumn{1}{c}{*}    & \multicolumn{1}{c}{*}          \\
14                    & \multicolumn{1}{c}{*}    & \bm{$+\infty$}                 & \multicolumn{1}{c}{*}    & \bm{$+\infty$}          &  & \multicolumn{1}{c}{*}    & \multicolumn{1}{c}{*}          & \multicolumn{1}{c}{*}    & \multicolumn{1}{c}{*}          \\
15                    & \multicolumn{1}{c}{*}    & \bm{$+\infty$}                 & \multicolumn{1}{c}{*}    & \bm{$+\infty$}          &  & \multicolumn{1}{c}{*}    & \multicolumn{1}{c}{*}          & \multicolumn{1}{c}{*}    & \multicolumn{1}{c}{*}\\
\hline
\end{tabular}
\end{table}
Table~\ref{tab:effect_(n1,n2)=(5,25)} provides the results of $(n_1,n_2)=(5,25)$.
The information of the optimal values is omitted because those of the dP- and Y{\i}ld{\i}r{\i}m-type approximation problems with the full expressions are the same as those provided theoretically (and numerically).
The effect of the concise expressions is significant, and the solver and total time required to solve the dP- and Y{\i}ld{\i}r{\i}m-type approximation problems with the concise expressions are shorter than those with the full expressions, except for the total time of solving the dP-type inner-approximation problem of depth 0.
Note that the effect was also confirmed for $(n_1,n_2) = (20,5)$.

\section{Conclusion and future works}\label{sec:conclusion}
In this study, we provided approximation hierarchies for the COP cone over a symmetric cone and compared them with existing approximation hierarchies.
We first provided the NN-type inner-approximation hierarchy.
Its strength is that the hierarchy permits an SOS representation for a general symmetric cone.
We then provided the dP- and Y{\i}ld{\i}r{\i}m-type approximation hierarchies for the COP matrix cone over the direct product of a nonnegative orthant and second-order cone by exploiting those for the usual COP cone provided by de Klerk and Pasechnik~\cite{dP2002} and Y{\i}ld{\i}r{\i}m~\cite{Yildirim2012}.
Remarkably, they are not affected much by the increase in the size of the second-order cone, unlike the NN-type and existing approximation hierarchies.
Combining the proposed approximation hierarchies with those existing, we obtained nearly optimal values of COPP problems when the size of the nonnegative orthant is small.

Unfortunately, the infinity of a set of Jordan frames is guaranteed to be solved in Sects.~\ref{subsec:inner_approx} and~\ref{subsec:outer_approx} only when the symmetric cone is the direct product of a nonnegative orthant and second-order cone and $m = 2$.
However, as shown by the following proposition, we obtain an outer-approximation hierarchy implementable on a computer for the COP cone over a general symmetric cone by replacing a set $\mathfrak{F}$ of Jordan frames appearing in \eqref{eq:outer_approx} with its finite subset $\mathfrak{F}_r$ such that the union $\bigcup_r\mathfrak{F}_r$ is dense in $\mathfrak{F}$.
The proof is omitted for brevity.

\begin{proposition}
Let $\{\mathcal{O}_r^{\rk,m}\}_r$ be a sequence such that each $\mathcal{O}_r^{\rk,m}$ is a closed convex cone and $\mathcal{O}_r^{\rk,m} \downarrow \COP^{\rk,m}$.
We fix a set $\mathfrak{F}$ with $\mathfrak{F}_{\rm c}(\mathbb{E}) \subseteq \mathfrak{F} \subseteq \mathfrak{F}(\mathbb{E})$ and let $\{\mathfrak{F}_r\}_r$ be a non-decreasing sequence of finite subsets of $\mathfrak{F}$ such that $\bigcup_{r=0}^{\infty}\mathfrak{F}_r$ is dense in $\mathfrak{F}$.
Let
\begin{equation*}
\widehat{\mathcal{O}}_r^{n,m}(\mathbb{E}_+) \coloneqq \bigcap_{(c_1,\dots,c_{\rk})\in\mathfrak{F}_r}\{\mathcal{A}\in\mathcal{S}^{n,m}(\mathbb{E}) \mid \mathcal{A}(c_1,\dots,c_{\rk})\in\mathcal{O}_r^{\rk,m}\}.
\end{equation*}
Then, each $\widehat{\mathcal{O}}_r^{n,m}(\mathbb{E}_+)$ is a closed convex cone and $\widehat{\mathcal{O}}_r^{n,m}(\mathbb{E}_+) \downarrow \COP^{n,m}(\mathbb{E}_+)$.
\end{proposition}

The characterization of $\COP^{n,m}(\mathbb{E}_+)$ provided by Theorem~\ref{thm:permutation_invariant} might be also useful to investigate its geometric properties.
For example, \textit{faces}\footnote{
Recall that a nonempty convex subcone $\mathcal{F}$ of a closed convex cone $\mathcal{K}$ is a face of $\mathcal{K}$ if the following condition holds: for any $x,y\in\mathcal{K}$, if $x+y\in\mathcal{F}$, then $x,y\in\mathcal{F}$.
}
are geometric objects of a closed convex cone, and the facial structure of the usual COP cone has been investigated so far~\cite{Dickinson2011,DC2020}.
The following proposition enables us to study the facial structure of $\COP^{n,m}(\mathbb{E}_+)$ through that of the usual COP cone appearing in the characterization of $\COP^{n,m}(\mathbb{E}_+)$.

\begin{proposition}
Let $\mathcal{F}$ be a face of $\COP^{\rk,m}$.
Then,
\begin{equation*}
\COP^{n,m}(\mathbb{E}_+;\mathcal{F}) \coloneqq \bigcap_{(c_1,\dots,c_{\rk})\in\mathfrak{F}}\{\mathcal{A}\in\mathcal{S}^{n,m}(\mathbb{E})\mid\mathcal{A}(c_1,\dots,c_{\rk})\in\mathcal{F}\}
\end{equation*}
is a face of $\COP^{n,m}(\mathbb{E}_+)$.
\end{proposition}

\begin{proof}
Since $\mathcal{A}(c_1,\dots,c_{\rk})$ is linear with respect to $\mathcal{A} \in \mathcal{S}^{n,m}(\mathbb{E})$ for each $(c_1,\dots,c_{\rk})\in\mathfrak{F}$, we see that $\COP^{n,m}(\mathbb{E}_+;\mathcal{F})$ is a convex subcone of $\COP^{n,m}(\mathbb{E}_+)$.
For $\mathcal{A},\mathcal{B}\in\COP^{n,m}(\mathbb{E}_+)$, we assume that
\begin{equation}
\mathcal{A} + \mathcal{B} \in \COP^{n,m}(\mathbb{E}_+;\mathcal{F}). \label{eq:face_asm}
\end{equation}
Let $(c_1,\dots,c_{\rk})\in\mathfrak{F}$ be arbitrary.
Then, it follows from \eqref{eq:face_asm} that
\begin{equation*}
\mathcal{A}(c_1,\dots,c_{\rk}) + \mathcal{B}(c_1,\dots,c_{\rk}) = (\mathcal{A} + \mathcal{B})(c_1,\dots,c_{\rk}) \in \mathcal{F}.
\end{equation*}
The two tensors $\mathcal{A}(c_1,\dots,c_{\rk})$ and $\mathcal{B}(c_1,\dots,c_{\rk})$ belong to $\COP^{\rk,m}$ since $\mathcal{A},\mathcal{B}\in\COP^{n,m}(\mathbb{E}_+)$.
Given that $\mathcal{F}$ is a face of $\COP^{\rk,m}$, we see that $\mathcal{A}(c_1,\dots,c_{\rk})$ and $\mathcal{B}(c_1,\dots,c_{\rk})$ belong to $\mathcal{F}$.
As $(c_1,\dots,c_{\rk})\in\mathfrak{F}$ is arbitrary, we obtain $\mathcal{A},\mathcal{B}\in \COP^{n,m}(\mathbb{E}_+;\mathcal{F})$.
\end{proof}

Finally, questions arise concerning the inclusion among the dP-, ZVP-, and NN-type inner-approximation hierarchies.
In the numerical experiment conducted in Sect.~\ref{subsec:compare_hierarchy}, the ZVP- and NN-type inner-approximation hierarchies provided considerably tighter bounds than the dP-type.
In the case in which the symmetric cone is a nonnegative orthant, the dP-type inner-approximation hierarchy is well known~\cite{dP2002} to be included in the ZVP- and NN-type ones, which agree with that provided by Parrilo~\cite{Parrilo2000}.
Investigating whether the inclusion also holds where the symmetric cone is the direct product of a nonnegative orthant and second-order cone would be interesting.

\vspace{0.5cm}
\noindent
{\bf Acknowledgments}
The authors thank anonymous reviewers for their useful comments.
This work was supported by Japan Society for the Promotion of Science Grants-in-Aid for Scientific Research (Grant Numbers JP20H02385 and JP22J20008).
This version of the article has been accepted for publication, after peer review but is not the Version of Record and does not reflect post-acceptance improvements, or any corrections.
The Version of Record is available online at: \url{https://doi.org/10.1007/s10898-023-01319-3}.

\section*{Declarations}
{\bf Conflict of interest}
The authors declare that they have no conflicts of interest.

\begin{appendices}
\section{Calculation of \eqref{eq:moment}} \label{apdx:moment}
Note that we can represent the set $\Delta(\mathbb{K})$ as
\begin{equation*}
\left\{\bm{x} = (\bm{x}_1,x_{21},\dots,x_{2n_2}) \in \mathbb{R}^n \relmiddle|
\begin{aligned}
&(\bm{x}_1,x_{21})\in \Delta_{\le}^{n_1+1},\\
&(x_{22},\dots,x_{2n_2}) \in B^{n_2-1}(x_{21})
\end{aligned}
\right\},
\end{equation*}
where
\begin{align*}
\Delta_{\le}^{n_1+1} &\coloneqq \{\bm{z}\in\mathbb{R}_+^{n_1+1} \mid \bm{z}^\top\bm{1} \le 1\},\\
B^{n_2-1}(x_{21}) &\coloneqq \{\bm{z}\in\mathbb{R}^{n_2-1}\mid \|\bm{z}\|_2 \le x_{21}\}.
\end{align*}
Then, for $\bm{\alpha} = (\bm{\alpha}_1,\alpha_{21},\dots,\alpha_{2n_2})\in \mathbb{N}^n$, it follows that
\begin{align*}
y_{\bm{\alpha}} &= \int_{\mathbb{R}^n}\bm{x}^{\bm{\alpha}}d\nu\\
&= \int_{\Delta(\mathbb{K})}\bm{x}^{\bm{\alpha}}d\bm{x}\\
&= \int_{\Delta_{\le}^{n_1+1}}\bm{x}_1^{\bm{\alpha}_1}x_{21}^{\alpha_{21}}\left(\int_{B^{n_2-1}(x_{21})}x_{22}^{\alpha_{22}}\cdots x_{2n_2}^{\alpha_{2n_2}}dx_{22}\cdots dx_{2n_2}\right)d\bm{x}_1dx_{21}.
\end{align*}
From \cite{Folland2001}, we note that
\begin{align}
&\int_{B^{n_2-1}(x_{21})}x_{22}^{\alpha_{22}}\cdots x_{2n_2}^{\alpha_{2n_2}}dx_{22}\cdots dx_{2n_2} \nonumber\\
&\quad = \begin{cases}
\frac{2\prod_{i=2}^{n_2}\Gamma(\beta_{2i})x_{21}^{\sum_{i=2}^{n_2}\alpha_{2i}+n_2-1}}{\left(\sum_{i=2}^{n_2}\alpha_{2i}+n_2-1\right)\Gamma(\sum_{i=2}^{n_2}\beta_{2i})} & \text{(if all of $\alpha_{22},\dots,\alpha_{2n_n}$ are even)},\\
0 & \text{(if some of $\alpha_{22},\dots,\alpha_{2n_n}$ are odd)}.
\end{cases} \label{eq:integral_ball}
\end{align}
As \eqref{eq:integral_ball} implies that $y_{\bm{\alpha}} = 0$ where some of $\alpha_{22},\dots,\alpha_{2n_n}$ are odd, we consider the case in which all $\alpha_{22},\dots,\alpha_{2n_n}$ are even.
In this case, we have
\begin{align*}
y_{\bm{\alpha}} &= \frac{2\prod_{i=2}^{n_2}\Gamma(\beta_{2i})}{\left(\sum_{i=2}^{n_2}\alpha_{2i}+n_2-1\right)\Gamma(\sum_{i=2}^{n_2}\beta_{2i})}\int_{\Delta_{\le}^{n_1+1}}\bm{x}_1^{\bm{\alpha}_1}x_{21}^{\sum_{i=1}^{n_2}\alpha_{2i}+n_2-1}d\bm{x}_1dx_{21}\\
&= \frac{2\bm{\alpha}_1!\left(\sum_{i=1}^{n_2}\alpha_{2i}+n_2-1\right)!\prod_{i=2}^{n_2}\Gamma(\beta_{2i})}{\left(\sum_{i=2}^{n_2}\alpha_{2i}+n_2-1\right)(n+|\bm{\alpha}|)!\Gamma(\sum_{i=2}^{n_2}\beta_{2i})}.
\end{align*}
See \cite{GM1978}, for example, to obtain the last equation.
\end{appendices}

\end{document}